\def\z{\zeta}
\newcommand{\comment}[1]{}
\newcommand{\RR}{{\mathbb R}}
\newcommand{\CC}{{\mathbb C}}
\newcommand{\ZZ}{{\mathbb Z}}
\newcommand{\Id}{\operatorname{Id}}
\newtheorem{thm}{Theorem}[section]
\newtheorem{cor}[thm]{Corollary}
\newtheorem{lem}[thm]{Lemma}
\newtheorem{prop}[thm]{Proposition}
\theoremstyle{definition}
\newtheorem{defn}[thm]{Definition}
\newtheorem{rmk}[thm]{Remark}
\numberwithin{equation}{section}
\newtheorem*{eg}{Example}
\newcommand{\OP}[1]{\mathrm{#1}}
\def\cprime{$'$} \def\cprime{$'$} \def\cprime{$'$} \def\cprime{$'$}
\newcommand{\co}{\mskip0.5mu\colon\thinspace}
\begin{document}

\title[Legendrian ambient surgery]{Legendrian ambient surgery and Legendrian contact homology}

\author[G. Dimitroglou Rizell]{Georgios Dimitroglou Rizell}

\begin{abstract}
Let $L \subset Y$ be a Legendrian submanifold of a contact manifold, $S\subset L$ a framed embedded sphere bounding an isotropic disc $D_S \subset Y \setminus L$, and use $L_S$ to denote the manifold obtained from $L$ by a surgery on $S$. Given some additional conditions on $D_S$ we describe how to obtain a Legendrian embedding of $L_S$ into an arbitrarily small neighbourhood of $L \cup D_S \subset Y$ by a construction that we call Legendrian ambient surgery. In the case when the disc is subcritical, we produce an isomorphism of the Chekanov-Eliashberg algebra of $L_S$ with a version of the Chekanov-Eliashberg algebra of $L$ whose differential is twisted by a count of pseudo-holomorphic discs with boundary-point constraints on $S$. This isomorphism induces a one-to-one correspondence between the augmentations of the Chekanov-Eliashberg algebras of $L$ and $L_S$. 
\end{abstract}

\maketitle

\setcounter{tocdepth}{1}
\tableofcontents

\section{Introduction}
In the following we let $L \subset (Y,\lambda)$ denote a Legendrian submanifold of a $(2n+1)$-dimensional contact manifold with a fixed choice of contact form $\lambda \in \Omega^1(Y)$. In other words, $L$ is $n$-dimensional and satisfies $TL \subset \xi$, where $\xi:=\ker \lambda$ and $\lambda \wedge d\lambda^{\wedge n}$ is a volume-form on $Y$. Associated to the contact form $\lambda$ is the so-called Reeb vector-field $R$ defined uniquely by
\[\iota_R d\lambda=0, \quad \lambda(R)=1.\]
Periodic integral curves of $R$ are usually called \emph{periodic Reeb orbits}, while integral curves of $R$ having positive length and end-points on $L$ are called \emph{Reeb chords on $L$}. The set of Reeb chords on $L$ will be denoted by $\mathcal{Q}(L)$, and under certain genericity assumptions they constitute a discrete set.

The Legendrian submanifolds in a given contact manifold are plentiful due to the fact that the existence problem for Legendrian embeddings satisfy an h-principle; see e.g.~\cite{LegendrianTransverse} for Legendrian knots (i.e.~one-dimensional Legendrian embeddings), while the h-principle for Lagrangian immersions \cite{ClassificationLagrangeImmersions}, \cite{PartialDiffRel} can be used to produce Legendrian embeddings in higher dimensions. However, it is known that there is no \emph{one-parameter} version of the h-principle for Legendrian embeddings. In other words, determining whether two Legendrian embeddings are \emph{Legendrian isotopic}, that is smoothly isotopic through Legendrian embeddings, is in general a very subtle problem.

Legendrian contact homology is a powerful Legendrian isotopy invariant associated to a Legendrian submanifold which is defined by algebraically encoding counts of certain pseudo-holomorphic discs associated to the Legendrian submanifold. The invariant was introduced independently in \cite{DiffAlg} by Chekanov for Legendrian knots, and in \cite{IntroSFT} by Eliashberg, Givental, and Hofer in the more general framework of symplectic field theory (SFT for short).

We proceed to give a very brief outline of the construction of Legendrian contact homology; see Section \ref{sec:background} for more details. The Legendrian contact homology differential graded algebra (DGA for short) of a Legendrian submanifold $L$, also called \emph{Chekanov-Eliashberg algebra}, will be denoted by $(\mathcal{A}(L),\partial)$ and is defined as follows. The underlying algebra $\mathcal{A}(L)=\ZZ_2\langle\mathcal{Q}(L)\rangle$ is the unital graded $\ZZ_2$-algebra freely generated by the set of Reeb chords on $L$, each of which has an induced grading modulo the Maslov number of $L$. The differential $\partial \colon \mathcal{A}(L) \to \mathcal{A}(L)$ is a $\ZZ_2$-linear morphism of degree $-1$ satisfying the Leibniz rule. For each generator in $\mathcal{Q}(L)$ the differential is defined by counting pseudo-holomorphic discs (for a cylindrical almost complex structure) in the so-called \emph{symplectisation of $(Y,\lambda)$}, i.e.~the symplectic manifold $(\RR \times Y,d(e^t\lambda))$ where $t$ denotes the standard coordinate on the $\RR$-factor, having boundary on the Lagrangian cylinder $\RR \times L$. Roughly speaking, the coefficient of the word $b_1  \cdots  b_m$ in the expression $\partial(a)$ is obtained by counting non-trivial (i.e.~not of the form $\RR \times a$) such discs which are rigid up to a translations in the $\RR$-component and which have one boundary puncture asymptotic to $a$ as $t \to +\infty$, and $m$ boundary punctures asymptotic to $b_i$, $i=1,\hdots,m$, as $t \to -\infty$.

In favourable situations it has been shown that
\begin{itemize}
\item $\partial^2=0$, i.e.~the above map defines a differential; and
\item the homotopy type of $(\mathcal{A}(L),\partial)$ is independent of the choice of an almost complex structure and invariant under Legendrian isotopy.
\end{itemize}
It is expected that Legendrian contact homology will be possible to define in an arbitrary contact manifold once that appropriate abstract perturbations techniques have been carried out in the SFT setting (e.g.~using the polyfold machinery due to Hofer, Wysocki and Zehnder \cite{GeneralFredholm}). We will however be limiting ourself to contact manifolds $(Y,\lambda)$ for which the theory already has been established. See Section \ref{sec:assumptions} for a description of some conditions as well as examples.

The class of loose Legendrian submanifolds of dimension at least $n\ge 2$ was introduced in recent work by Murphy \cite{LooseLeg} and they were shown to satisfy the full h-principle in the same article. Loose Legendrian submanifolds can be seen as higher-dimensional analogues of stabilised Legendrian knots and, not surprisingly, their Legendrian contact homologies vanish.

In view of the above, in order to understand the space of Legendrian embeddings it is thus important to understand those embeddings which are \emph{not} loose. Here one immediately faces the following two obstacles: First, there is a lack of explicit constructions of Legendrian submanifolds (and there is no known h-principle which is guaranteed to produce non-loose Legendrian submanifolds). Second, given a Legendrian submanifold of dimension at least two, it is in general a highly non-trivial problem to compute its Legendrian contact homology. Namely, this amounts to counting pseudo-holomorphic discs, which typically means finding the solutions of a non-linear partial differential equation.

\subsection{Results}
Our goal is to provide operations on a given Legendrian submanifold for constructing new Legendrian submanifolds and, more importantly, then to describe how the Legendrian contact homology is affected.

\subsubsection{The Legendrian ambient surgery construction}
There is a well-known construction called cusp-connect sum which, given a Legendrian submanifold having two connected components, produces a Legendrian embedding of their connected sum (see e.g.~\cite[Section 4.2]{NonIsoLeg} and \cite{LegendrianTransverse}). In Section~\ref{sec:defin-an-ambi} we provide a generalisation of cusp-connect sum to handle the case of a general $k$-surgery by a construction that we call \emph{Legendrian ambient surgery}. A related construction has also appeared in \cite{Geography}.

The Legendrian ambient surgery construction works roughly as follows. Assume that we are given a framed embedded $k$-sphere $S \subset L$, $0 \le k \le n-1$, inside a Legendrian submanifold $L \subset (Y,\lambda)$, where $S$ moreover bounds an embedded isotropic $(k+1)$-disc $D_S \subset Y$ having interior disjoint from $L$. The latter disc is called an \emph{isotropic surgery disc}. Use $L_S$ to denote the smooth manifold obtained from $L$ by surgery on $S$ with the above framing. Under certain additional assumptions on the isotropic surgery disc (see Definition \ref{def:surgerydisc}), we can define a Legendrian embedding of $L_S$ contained inside an arbitrarily small neighbourhood of $L \cup D_S$. In the case when additional genericity assumptions are satisfied for both the contact form $\lambda$ and the Legendrian submanifold $L$, the produced Legendrian submanifold $L_S$ moreover satisfies $\mathcal{Q}(L_S)=\mathcal{Q}(L) \cup \{ c_S\}$, where $c_S$ is a new (transverse) Reeb chord appearing on $L_S$ which is of degree $|c_S|=n-k-1$. We say that $L_S \subset Y$ is obtained from $L$ by a \emph{Legendrian ambient surgery on $S$} (see Definition \ref{def:main}).

In \cite{OnConnectedSum} the cusp-connect sum was shown to be a well-defined operation on Legendrian knots. We establish a related result for Legendrian ambient 0-surgeries on a general Legendrian submanifold in Proposition \ref{prop:well-def}, showing that also this is a well-defined operation in a certain sense. This answers a question posed in \cite{NonIsoLeg}.

\subsubsection{The DGA morphism induced by an elementary Lagrangian cobordism}
By an \emph{exact Lagrangian cobordism} from the Legendrian manifold $L_- \subset (Y,\lambda)$ to $L_+ \subset (Y,\lambda)$ we mean a properly embedded submanifold $V \subset \RR \times Y$ of the form
\[V=\left((-\infty,A) \times L_-\right) \cup \overline{V} \cup \left((B,+\infty) \times L_+\right), \]
where $\overline{V} \subset [A,B] \times Y$ is compact, and the pull-back of $e^t \lambda$ to $V$ has a primitive which is globally constant when restricted to either of the subsets $V \cap \{ t \le A\}$ and $V \cap \{ t \ge B \}$.

Recall that, in accordance with the SFT formalism \cite{IntroSFT}, \cite{RationalSFT}, an exact Lagrangian cobordism together with the choice of a generic compatible almost complex structure gives rise to a unital DGA morphism
\[ \Phi_V \colon (\mathcal{A}(L_+),\partial_+) \to (\mathcal{A}(\Lambda_-),\partial_-) \]
of the involved Chekanov-Eliashberg algebras.

By an explicit construction, the Legendrian ambient surgery on $S$ inside $L \subset (Y,\lambda)$ producing $L_S \subset (Y,\lambda)$ also provides an exact Lagrangian cobordism $V_S \subset (\RR \times Y,d(e^t\lambda))$ from $L$ to $L_S$ which is diffeomorphic to the elementary handle-attachment of index $k+1$ corresponding to the surgery. The latter Lagrangian cobordism is called an \emph{elementary Lagrangian cobordism of index $k+1$}. Using
\[\Phi_{V_S} \co (\mathcal{A}(L_S),\partial_{L_S}) \to (\mathcal{A}(L),\partial)\]
to denote the DGA morphism induced by the elementary Lagrangian cobordism $V_S$, we provide the following computation in Section \ref{proof:surjection}.
\begin{thm}
\label{thm:surjection}
For an appropriate regular compatible almost complex structure on $\RR \times Y$, the DGA morphism $\Phi_{V_S}$ is a surjection which, moreover, satisfies
\[ \ker  \Phi_{V_S} = \begin{cases}\langle c_S \rangle, & k<n-1, \\
\langle c_S -1\rangle, & k=n-1.
\end{cases}\]
\end{thm}
In particular, for the above choice of almost complex structure, $(\mathcal{A}(L),\partial)$ is the quotient of $(\mathcal{A}(L_S),\partial_{L_S})$ by a two-sided algebra ideal generated by a single element.

By a \emph{(0-graded) augmentation} of a DGA $(\mathcal{A},\partial)$ we mean a unital DGA morphism
\[ \varepsilon \co (\mathcal{A},\partial) \to (\ZZ_2,0) \]
to the trivial DGA (which in particular thus preserves the grading). Observe that the set of augmentations of the Chekanov-Eliashberg algebra very well can be empty. For instance, this is the case when $(\mathcal{A}(L),\partial)$ is acyclic, i.e.~$1 \in \partial(\mathcal{A}(L))$. However, in the case when there is an augmentation, Chekanov's linearisation construction \cite{DiffAlg} provide Legendrian isotopy invariants given as chain complexes spanned by the set of Reeb chords (which have the advantage of being easier to handle algebraically compared to the full DGA).

Let $\mu \in H^1(V_S,\ZZ)$ denote the Maslov class of $V_S$, and assume that all gradings are taken in $\ZZ/\mu(H_1(L))$. In cases when the grading of the new chord satisfies $0 \neq |c_S| \in \ZZ/\mu(H_1(L))$, it follows by definition that $\varepsilon(c_S)=0$ for all augmentations. Recall that augmentations can be pulled back by the pre-composition with a unital DGA morphism. A purely algebraic consequence of the above theorem is thus the following result. 
\begin{cor}
\label{cor:aug}
If $k<n-1$ and, moreover
\[(n-k-1) \notin \mu(H_1(L)),\]
then the pull-back under $\Phi_{V_S}$ induces a bijection from the set of 0-graded augmentations of $(\mathcal{A}(L),\partial)$ to the set of 0-graded augmentations of $(\mathcal{A}(L_S),\partial_{L_S})$.
\end{cor}

\subsubsection{The Chekanov-Eliashberg algebra twisted by a submanifold}
In order to explain how the Legendrian contact homology of a Legendrian submanifold $L \subset (Y,\lambda)$ is affected by a Legendrian ambient surgery on $S \subset L$, we must first introduce a version of the Chekanov-Eliashberg algebra whose differential takes into account rigid pseudo-holomorphic discs with boundary-point constraints on a submanifold $S \subset L$.

More precisely, assume that we are given the following data: we let $S \subset L$ be a closed $k$-dimensional submanifold, $0 \le k < n-1$, which is disjoint from the endpoints of the Reeb chords on $L$. We also assume that $S$ has a non-vanishing normal vectorfield tangent to $L$, and we fix one such vectorfield $\mathbf{v} \in \Gamma(TL)$ to be part of the data. The \emph{Chekanov-Eliashberg algebra twisted by $(S,\mathbf{v})$} is now defined to be the DGA $(\mathcal{A}(L;S),\partial_{S,\mathbf{v}})$ defined as follows. The underlying algebra $\mathcal{A}(L;S)=\ZZ_2\langle \mathcal{Q}(L) \cup \{s\} \rangle$ is the unital free algebra generated by the Reeb chords (graded as before) together with a formal generator $s$ in degree $|s|=n-k-1$.

To define the twisted differential we will first need to describe the appropriate moduli spaces. We fix a cylindrical almost complex structure $J_{\OP{cyl}}$ on the symplectisation and lift $\mathbf{v}$ to a vectorfield normal to $\RR \times S \subset \RR \times L$ which is invariant under translation of the $t$-coordinate. Furher, we let $\mathbf{w}=(w_1,\hdots,w_{m+1}) \in (\ZZ_{\ge 0})^{m+1}$ be an $(m+1)$-tuple of non-negative integers, $\mathbf{b}=b_1 \cdots b_m$ a word of generators in $\mathcal{Q}(L)$, and $A \in H_1(L)$ a homology class.

Consider the moduli space $\mathcal{M}_{a;\mathbf{b};A}(L;J_{\OP{cyl}})$ defined in Section \ref{sec:bgmoduli} consisting of $J_{\OP{cyl}}$-holomorphic discs
\[u \co (D^2,\partial D^2) \to (\RR \times Y,\RR \times L)\]
having one positive boundary-puncture $p_0$ asymptotic to $a$, and $m$ negative boundary-punctures $p_i$ asymptotic to $b_i$, where $i=1,\hdots,m$. Here we require that the boundary-punctures satisfy $p_1 < \cdots <p_m$ with respect to the total order on $\partial D^2 \setminus \{p_0\}$ induced by the orientation.

Let $\delta>0$ and let $g$ be a Riemannian metric on $L$. Similarly to the constructions in \cite[Section 8.2.D]{EffectLegendrian}, in Section \ref{sec:moduli} we define the moduli spaces
\[\mathcal{M}^{g,\delta}_{a;\mathbf{b},\mathbf{w};A}(L;S,\mathbf{v};J_{\OP{cyl}}) \subset \mathcal{M}_{a;\mathbf{b};A}(L;J_{\OP{cyl}})\]
consisting of the solutions $u$ satisfying the following boundary-point constraints at parallel copies of $\RR \times S \subset \RR \times L$. There are $w:=w_1+\cdots+w_{m+1}$ distinct boundary points $\{ q_i \} \subset \partial D^2$, satisfying $q_1<\cdots<q_w$ with respect to the order on $\partial D^2 \setminus \{p_0\}$, for which
\[u(q_i) \in \RR \times  \exp_S((i-1) \delta \mathbf{v}) \subset \RR \times L, \quad i=1,\hdots,w.\]
Here $\exp_p$ denotes the exponential map at $p \in L$ induced by $g$. Moreover, we require that $w_{i+1}$ of the points $\{ q_i \}$ are situated on the boundary arc between the punctures $p_i$ and $p_{i+1}$ (here we set $p_{m+1}:=p_0$).

Since the data used to define these moduli spaces is invariant under translations of the $t$-coordinate, they carry a natural $\RR$-action induced by such translations.

The differential is now defined as follows. For the formal generator $s$ we define $\partial_{S,\mathbf{v}}(s):=0$, while for a Reeb chord generator $a \in \mathcal{Q}(L)$ we define
\begin{eqnarray*}
\lefteqn{\partial_{S,\mathbf{v}}(a):=}\\
& & \sum_{|a|-|\mathbf{s}|-|\mathbf{b}|+\mu(A)=1} |\mathcal{M}^{g,\delta}_{a;\mathbf{b},\mathbf{w};A}(L;S,\mathbf{v};J_{\OP{cyl}})/\mathbb{R} |s^{w_1}b_1s^{w_2}\cdots s^{w_m}b_ms^{w_{m+1}},
\end{eqnarray*}
where $\mathbf{s}:=s^{w_1 +\cdots+w_{m+1}}$ and $J_{\OP{cyl}}$ is a regular cylindrical almost complex structure. Finally, the differential is extended to the whole algebra using the Leibniz rule.

\enlargethispage{1em}
Observe that the two-sided algebra ideal $\langle s \rangle$ generated by $s$ is preserved by $\partial_{S,\mathbf{v}}$. Furthermore, it follows simply by the definition of $\partial$ that the equality
\[ (\mathcal{A}(L;S),\partial_{S,\mathbf{v}})/\langle s \rangle = (\mathcal{A}(L),\partial)\]
holds under the canonical identification of generators.

The following theorem is a consequence of Lemma \ref{lem:bdy} together with Proposition \ref{prop:twistiso}.
\begin{thm}
\label{thm:dga}
Let $S \subset L$ be a $k$-dimensional submanifold with a non-vanishing normal vectorfield $\mathbf{v}$, where $k<n-1$. For a generic cylindrical almost complex structure on $\RR \times Y$, the DGA $(\mathcal{A}(L;S),\partial_{S,\mathbf{v}})$ is well-defined. Furthermore, its tame-isomorphism class is independent of the choice of a generic pair $(g,\delta)$, and invariant under isotopy of the pair $(S,\mathbf{v})$.
\end{thm}

\begin{rmk}
\label{rem:invariance}
\begin{enumerate}
\item The invariance proof of the Chekanov-Eliashberg algebra should be possible to extend to show that the homotopy type of the twisted Chekanov-Eliashberg algebra is invariant under Legendrian isotopy and independent of the choice of a cylindrical almost complex structure. The proof depends on an abstract perturbation argument, and is omitted.
\item In the case when there is an embedded null-cobordism of $\{1\} \times S$ inside $[0,1] \times L$ along which $\mathbf{v}$ extends as a non-vanishing normal vectorfield, Corollary \ref{cor:nullcob} implies that $(\mathcal{A}(L;S),\partial_{S,\mathbf{v}})$ is tame-isomorphic to the free product of the Chekanov-Eliashberg algebra with the trivial DGA generated by $s$.
\item The homotopy type of the Chekanov-Eliashberg algebra of $L$ twisted by $S$ will in general depend on the homotopy class of the non-vanishing normal vectorfield $\mathbf{v}$, as follows by the computation in Section \ref{sec:whitneyex} (see the example in the case $k=(n-1)/2$).
\end{enumerate}
\end{rmk}

\begin{rmk}
Taking the limit $\delta \to 0$, the solutions in the above moduli spaces converge to $J_{\OP{cyl}}$-holomorphic discs having possibly several boundary points that are mapped to $\RR \times S$. If, for a convergent sequence of such solutions, a cluster of $m$ of boundary points mapping to $\RR \times \exp_S(i \delta \mathbf{v})$ (for different $i \in \ZZ_{\ge 0}$) collide, the solution in the limit can be seen to have a tangency to $\RR \times \exp_S(\RR\mathbf{v})$ of order $m$. We refer to \cite[Part II, Sections 8 and 9]{MyThesis} for more details concerning the moduli spaces of discs with jet-constraints at boundary points. Following the ideas in \cite{ExoticSpheres}, transversality for these moduli spaces is there shown by using an almost complex structure $J_{\OP{cyl}}$ that is integrable in a neighbourhood of $\RR \times S$. It is also shown that there is an isomorphism between the moduli space with appropriate jet-constraints at boundary points and the above moduli space with boundary-point constraints at parallel copies, given that $\delta>0$ is sufficiently small (see \cite[Lemma 8.3]{EffectLegendrian} for a similar statement). Moduli spaces of pseudo-holomorphic curves with jet-constraints at boundary points were also treated in \cite{HolomorphicJets} using a different approach.
\end{rmk}

\subsubsection{The Legendrian contact homology after a subcritical Legendrian ambient surgery ($k<n-1$)}
Assume that $0\le k < n-1$ and that $L_S \subset (Y,\lambda)$ is obtained by a Legendrian ambient surgery on a framed $k$-sphere $S$ inside $L \subset (Y,\lambda)$. Since the isotropic core disc is of dimension $k+1<n$ in this case, and is thus subcritical isotropic, such a Legendrian ambient surgery will be called \emph{subcritical}. For a Legendrian submanifold $L_S$ obtained by a subcritical Legendrian ambient surgery, its Chekanov-Eliashberg algebra can be computed as follows using data on $L$; see Section~\ref{sec:isomorphism} for the proof.
\begin{thm}
\label{thm:isom}
Suppose that $k<n-1$ and that $L_S$ is obtained from $L$ by a Legendrian ambient surgery on a framed embedded sphere $S \subset L$. Let $\mathbf{v}$ be a non-vanishing normal vectorfield to $S$ that is constant with respect to this frame. There is a tame isomorphism of DGAs
\[ \Psi\co (\mathcal{A}(L_S),\partial_{L_S}) \to (\mathcal{A}(L;S),\partial_{S,\mathbf{v}}),\]
where $\Psi(c_S)=s$, and for which the DGA morphism
\[ \Phi_{V_S}\circ\Psi^{-1} \co (\mathcal{A}(L;S),\partial_{S,\mathbf{v}}) \to (\mathcal{A}(L),\partial)=(\mathcal{A}(L;S),\partial_{S,\mathbf{v}})/\langle s \rangle\]
is the natural quotient projection. 
\end{thm}
The map $\Psi$ is defined by counting rigid pseudo-holomorphic discs in $\RR \times Y$ having boundary on $V_S$ and boundary-point constraints at parallel copies of the core disc $C_S \subset V_S \subset \RR \times Y$ of the elementary Lagrangian cobordism $V_S$ (see Section \ref{sec:defcobordism} for its definition). Note that the pair $C_S \subset V_S$ coincides with $(-\infty,-1] \times S \subset (-\infty,-1] \times L$ inside the subset $\{t \le -1\}$.

In the case of a cusp-connected sum on Legendrian surfaces inside a one-jet space $J^1M^2$, an alternative computation of the DGA is also provided by the results in \cite{BorderedLeg}.

\subsubsection{The Legendrian contact homology after a critical Legendrian ambient surgery ($k=n-1$)} In the case when $S \subset L$ is a framed $(n-1)$-sphere inside a Legendrian $n$-dimensional submanifold $L\subset (Y,\lambda)$ an isotropic surgery disc is \emph{Legendrian} and we say that a Legendrian ambient surgery performed on $S$ is \emph{critical}.

Unfortunately we are not able to prove a complete result analogous to Theorem \ref{thm:isom} in the case of a critical Legendrian surgery. However, we refer to Proposition \ref{prop:critical} for a partial result along the same lines, but where a different algebraic set-up has been used.

In this context the main difference between the critical and the subcritical case is that the Chekanov-Eliashberg algebra twisted by a hypersurface $S \subset L$ cannot be defined by the same counts as in the case when the submanifold is of codimension greater than one. The reason is that taking an arbitrary number $m>0$ of parallel copies $S_0=S, S_1, S_2, \hdots, S_{m-1} \subset L$ of $S$, the moduli space with boundary-point constraints at these $m$ hypersurfaces still has the same expected dimension as the moduli space without any boundary-point constraints. 

\begin{rmk}
In the case when $L_S \subset J^1M$ is obtained from an ambient surgery on a Legendrian knot, the result in \cite{LegKnotsLagCob} instead enables us to compute the DGA morphism $\Phi_{V_S}$, and consequently also the Chekanov-Eliashberg algebra of $L$, in terms of data on $L_S$.
\end{rmk}

\begin{rmk}

Given that the Legendrian submanifold $L \subset (J^1M,\lambda_0)$ admits a linear-at-infinity generating family as defined in \cite{QuasiFunctions}, there exists a long exact sequence due to Bourgeois, Sabloff, and Traynor \cite{Geography} relating the generating family homologies of $L_S$ and $L$. Here $L_S$ is obtained by a Legendrian ambient $k$-surgery on $L$ for any $0 \le k \le n-1$, under the additional assumption that the generating family on $L$ can be extended over the Lagrangian elementary cobordism $V_S$ (see Remark \ref{rmk:nogenfam} for an example when this is not possible). Generating family homology is a Legendrian isotopy invariant for Legendrian submanifolds of $J^1M$ admitting a generating family; see \cite{CombFronts}, \cite{GenFunPol}, and \cite{GeneratingFamilies}. For such a Legendrian submanifold it is expected that its Chekanov-Eliashberg algebra has an augmentation whose induced linearisation moreover computes the generating family homology. By the results in \cite{GeneratingFamilies}, this correspondence is known to be true for Legendrian knots inside $J^1\RR$.
\end{rmk}

\section{Examples and computations}
\subsection{Twisting by an even number of points}
Part (2) of Remark \ref{rem:invariance} shows that
\begin{prop}
\label{prop:free}
Let $L$ be connected and of dimension at least two, and let $S \subset L$ be an even number of points. It follows that, for any choice of $\mathbf{v}$, the DGA $(\mathcal{A}(L;S),\partial_{S,\mathbf{v}})$ is isomorphic to the free product of the Chekanov-Eliashberg algebra with the trivial DGA having one generator in degree $n-1$. \end{prop}
In the case when the number of points $S \subset L$ is odd, the situation gets more complicated. See the example below for the case of the Whitney sphere.

\subsection{Computations for the Whitney sphere}
\label{sec:whitneyex}

The \emph{Whitney immersion} is the exact Lagrangian immersion
\begin{gather*}S^n \to \widetilde{L}_{\OP{Wh}} \subset \CC^n,\\
 (\mathbf{x},y) \mapsto (1+iy)\mathbf{x}, \quad  (\mathbf{x},y) \in S^n \subset \RR^n\times \RR.
\end{gather*}
There exists a lift of this immersion to a Legendrian embedding $L_{\OP{Wh}} \subset (\CC^n \times \RR, dz-\sum y_idx_i)$. This Legendrian submanifold will be referred to as the \emph{Whitney $n$-sphere}. We use $c$ to denote its unique Reeb chord, which is of degree $|c|=n$, and corresponds to the unique transverse double-point of $\widetilde{L}_{\OP{Wh}}$ situated at the origin. 

In the following we suppose that $S \subset L_{\OP{Wh}}$ is an embedded $k$-dimensional submanifold, where $k<n-1$.

{\bf Case $ n-k-1 \nmid n-1$: }
By degree reasons it follows that, for any choice of non-vanishing normal vectorfield $\mathbf{v}$ to $S$, the twisted Chekanov-Eliashberg algebra
\[ (\mathcal{A}( L_{\OP{Wh}};S),\partial_{S,\mathbf{v}}) \]
is the free product of the Chekanov-Eliashberg algebra with the trivial DGA having one generator in degree $|s|=n-k-1$.

{\bf Case $k=0$: }
In the case when $|S|$ is even, Proposition \ref{prop:free} implies that
\[ (\mathcal{A}( L_{\OP{Wh}};S),\partial_{S,\mathbf{v}}) \]
is the free product of the Chekanov-Eliashberg algebra with the trivial DGA having one generator in degree $|s|=n-1$. In the case when $|S|$ is odd, we must however rely on an actual count of pseudo-holomorphic discs. Using an almost complex structure as described in Section \ref{sec:whitney} below, Lemma \ref{lem:whitney} shows that there is a unique holomorphic disc passing through a given point for the standard almost complex structure. In particular, the differential is thus given by
\[\partial_{S,\mathbf{v}}(c)=|S|s.\]

In the following we let
\[ S^{n-1}_{\mathfrak{Re}} \subset \mathfrak{Re}(\CC^n)=\RR^n \subset \CC^n\]
denote the unit $(n-1)$-sphere inside the real-part of $\CC^n$, and observe that $S^{n-1}_{\mathfrak{Re}} \subset \widetilde{L}_{\OP{Wh}}$ is an embedded sphere of codimension one. We now consider the special case when $S \subset S^{n-1}_{\mathfrak{Re}}$ is the embedded $k$-dimensional sphere being the boundary of the isotropic disc
\[D_S := \left\{ \begin{array}{l}
x_1^2+\cdots+x_{k+1}^2 \le 1, \\ x_{k+2}=\cdots=x_n=0, \\ \mathbf{y}=0
\end{array}\right\} \subset \mathfrak{Re}(\CC^n).\]
This isotropic disc lifts to an isotropic disc embedded in $(\CC^n \times \RR,\lambda_0)$ having boundary on $L_{\OP{Wh}}$. Furthermore, one can find a Lagrangian frame of the symplectic normal bundle of $D_S$ that makes it into an isotropic surgery disc compatible with some choice of frame of the normal bundle of $S \subset L_{\OP{Wh}}$. In other words, we can perform a Legendrian ambient surgery on $S$, producing the Legendrian submanifold $(L_{\OP{Wh}})_S$. Theorem \ref{thm:isom} implies that
\[ (\mathcal{A}((L_{\OP{Wh}})_S),\partial_{(L_{\OP{Wh}})_S}) \simeq (\mathcal{A}(L_{\OP{Wh}};S),\partial_{S,\mathbf{v}}),\]
where $\mathbf{v}$ may be taken to be (the restriction of) the vectorfield $\sum_{i=1}^n x_i \partial_{y_i}$ in $TL_{\OP{Wh}}$ along $S^{n-1}_{\mathfrak{Re}}$ (see condition (c) in Definition \ref{def:surgerydisc}).

{\bf Case $0\le k<(n-1)/2$: } Since $\pi_k(S^n)=0$, and since $S$ is of codimension at least $k+2$, it follows that $S$ is null-cobordant in $[0,1] \times L_{\OP{Wh}}$ by a null-cobordism along which $\mathbf{v}$ extends as a non-vanishing normal vectorfield. Part (2) of Remark \ref{rem:invariance} now shows that $(\mathcal{A}(L_{\OP{Wh}};S),\partial_{S,\mathbf{v}})$ is the free product of the Chekanov-Eliashberg algebra with the trivial DGA having one generator in degree $|s|=n-k-1$.

{\bf Case $k=n-2$: } It can be explicitly checked that $S$ is null-cobordant in $[0,1] \times S^{n-1}_{\mathfrak{Re}} \subset [0,1] \times L_{\OP{Wh}}$ by a null-cobordism to which $\mathbf{v}$ extends as a non-vanishing normal vectorfield. Part (2) of Remark \ref{rem:invariance} again shows that $(\mathcal{A}(L_{\OP{Wh}};S),\partial_{S,\mathbf{v}})$ is the free product of the Chekanov-Eliashberg algebra with the trivial DGA having one generator in degree $|s|=1$.

{\bf Case $k=1$: }
Since $k<n-1$ by assumption, it follows that $n \ge 3$. This case is thus covered by the case $n-k-1 \nmid n-1$ above. Note that, when $n>3$, the Lagrangian frame of the isotropic surgery disc $D_S$ can be chosen to be compatible with any given frame of the normal bundle of $S$, as follows from part (3) of Remark \ref{rem:1disc}. In particular, a Legendrian ambient surgery can be performed for which the corresponding elementary Lagrangian cobordism $V_S$ is not spin.

\begin{rmk}
\label{rmk:nogenfam}
A Legendrian immersion in $\CC^n \times \RR$ or, equivalently, an exact Lagrangian immersion in $(\CC^n,\omega_0)$ that is defined by a generating family $F \colon \mathfrak{Re}(\CC^n) \times \RR^N \to \RR$ satisfies additional topological properties. For instance, its tangent bundle is stably trivial (and hence it is spin). Recall that the Whitney sphere admits an explicit generating family. In the case when the above elementary Lagrangian cobordism $V_S$ is not spin, this generating family can thus \emph{not} be extended over the cobordism.
\end{rmk}

{\bf Case $k=(n-1)/2$: } In this case $|s|=(n-k-1)=(n-1)/2$. Since the normal bundle of $S$ is trivial, the choice of a non-vanishing normal vectorfield (up to homotopy) lives in
\[\pi_{(n-1)/2}(S^{n-k-1})=\pi_{(n-1)/2}(S^{(n-1)/2}) \simeq \ZZ.\]
One obtains an explicit identification of groups by the algebraic count of zeros of the orthogonal projection of this vectorfield to the normal bundle of $S \subset S^{n-1}_{\mathfrak{Re}}$ (recall that $S^{n-1}_{\mathfrak{Re}} \subset L_{\OP{Wh}}$ is of codimension one). We use $\mathbf{v}_m$ to denote the non-vanishing normal vectorfield whose projection has an algebraic number $m \in \ZZ$ of zeros (for a fixed choice of orientation). Note that the non-vanishing vectorfield $\mathbf{v}$ above is homotopic to $\mathbf{v}_0$.

For the almost complex structure in Section \ref{sec:whitney} below, the descriptions of the holomorphic discs in $\CC^n$ with boundary on $\widetilde{L}_{\OP{Wh}}$ given by Lemma \ref{lem:whitney} can be used for explicitly computing the differential. For a generic perturbation of the vectorfields $\mathbf{v}_m$, one readily checks that
\[ \partial_{S,\mathbf{v}_m}(c)=m s^2.\]
In particular, it follows that the homotopy type of the Chekanov-Eliashberg algebra twisted by $S$ depends on the homotopy class of the non-vanishing normal vectorfield.

\subsection{Holomorphic discs on the Whitney sphere}
\label{sec:whitney}

As in Section \ref{sec:integrable} we will consider the unique cylindrical almost complex structure $J_0$ on the symplectisation $\RR\times (\CC^n \times \RR)$ for which the canonical projection $\RR\times (\CC^n \times \RR) \to \CC^n$ is holomorphic.

By \cite[Theorem 2.1]{Lifting} this projection induces a bijection between the moduli space $\mathcal{M}_{c;\emptyset}(\RR \times L_{\OP{Wh}};J_0)$ and the moduli space of holomorphic polygons in $\CC^n$ having boundary on $\widetilde{L}_{\OP{Wh}}$. The following lemma can thus be used to compute the Chekanov-Eliashberg algebra of the Whitney sphere (possibly twisted by some submanifold).

For any point $\mathbf{x} \in S^{n-1}_{\mathfrak{Re}} \subset \mathfrak{Re}\CC^n$ in the real unit-sphere we consider the complex halfplane
\[H_{\mathbf{x}} := \{ x+iy; \: x\ge 0 \} \mathbf{x}  \subset \CC \mathbf{x} .\]
Observe that $\CC \mathbf{x} $ intersects $\widetilde{L}_{\OP{Wh}}$ in a figure-eight curve, and that each halfplane $H_{\mathbf{x}}$ intersects $\widetilde{L}_{\OP{Wh}}$ in a closed curve.

\begin{figure}[htp]
\centering
\labellist
\pinlabel $c$ at 91 54
\pinlabel $\widetilde{L}_{\OP{Wh}}\cap\CC\mathbf{x}$ at 60 96

\pinlabel $\pi_{\mathbf{x}}(\widetilde{L}^+_{\OP{Wh}})$ at 60 33
\pinlabel $\pi_{\mathbf{x}}(\widetilde{L}^-_{\OP{Wh}})$ at 60 63
\pinlabel $x$ at 217 47
\pinlabel $y$ at 105 109
\pinlabel $1$ at 188 55
\pinlabel $\color{red}\pi_{\mathbf{x}}(S^{n-1}_{\mathfrak{Re}})$ at 154 57
\endlabellist
\includegraphics{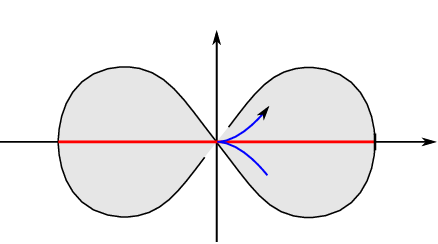}
\caption{The image of $\widetilde{L}_{\OP{Wh}}=\widetilde{L}^+_{\OP{Wh}} \cup \widetilde{L}^-_{\OP{Wh}}$ under the orthogonal projection $\pi_{\mathbf{x}} \co \CC^n \to \CC \mathbf{x} \simeq \CC$, where $\mathbf{x} \in S^{n-1}_{\mathfrak{Re}}$. The arrow denotes the behaviour of the boundary of the hypothetical disc in the proof of Lemma \ref{lem:whitney}.}
\label{fig:whitney}
\end{figure}

\begin{lem}
\label{lem:whitney}
The moduli space of (non-trivial) holomorphic discs in $(\CC^n,i)$ having boundary on $\widetilde{L}_{\OP{Wh}}$ and one puncture mapping to the double-point $c$ is $(n-1)$-dimensional and transversely cut out. Moreover, every solution is contained in a unique one-dimensional complex halfplane $H_{\mathbf{x} } \subset \CC\mathbf{x} $ as above.
\end{lem}
\begin{proof}
It is easily seen that each such one-dimensional complex halfplane contains a unique such disc. Conversely, we now show that every non-trivial holomorphic disc
\[u \co (D^2,\partial D^2) \to (\CC^n,\widetilde{L}_{\OP{Wh}})\]
having one boundary-puncture is of this form.

In the following we let $\pi_{\mathbf{x}} \co \CC^n \to \CC \mathbf{x} \simeq \CC$ denote the orthogonal projection onto the one-dimensional complex vectorspace spanned by $\mathbf{x} \in \mathfrak{Re}\CC^n \subset  \CC^n$.

The image of the Whitney sphere $\pi_{\mathbf{x}}(\widetilde{L}_{\OP{Wh}}) \subset \CC$ under these projections can be seen to be a filled figure-eight curve as shown in Figure \ref{fig:whitney}. The composition $\pi_{\mathbf{x}} \circ u \co D^2 \to \CC$ is holomorphic and maps the boundary into $\pi_{\mathbf{x}}(\widetilde{L}_{\OP{Wh}})$. Furthermore, the boundary-puncture is mapped to the origin. By the open mapping theorem, it thus follows that

\[\pi_{\mathbf{x}} \circ u(D^2 \setminus \partial D^2)\subset\OP{int}\pi_{\mathbf{x}}(\widetilde{L}_{\OP{Wh}}),\]
unless $\pi_{\mathbf{x}}\circ u \equiv 0$ vanishes identically (i.e.~maps constantly to the double-point of the figure-eight curve).

The boundary of the disc $u$ has the following behaviour at the boundary-puncture $p_0 \in \partial D^2$ mapping to the double-point $0 \in \widetilde{L}_{\OP{Wh}}$.  Let $\widetilde{L}_{\OP{Wh}}^\pm \subset \widetilde{L}_{\OP{Wh}}$ denote the image of the hemisphere $\{ \pm y \ge 0\} \cap S^n$ under the Whitney immersion. For $\epsilon>0$ sufficiently small, we have that
\[u(e^{i\theta}p_0) \in \begin{cases} \widetilde{L}_{\OP{Wh}}^+ \subset \CC^n, & 0>\theta>-\epsilon,\\
\{0\} \in \CC^n, & \theta=0,\\
\widetilde{L}_{\OP{Wh}}^- \subset \CC^n, & \epsilon>\theta>0.
\end{cases}\]
From this, it follows that the boundary of $u$ must intersect the sphere
\[S^{n-1}_{\mathfrak{Re}} \subset  \widetilde{L}_{\OP{Wh}}^- \cap \widetilde{L}_{\OP{Wh}}^+ \subset \widetilde{L}_{\OP{Wh}}\]
of codimension one in at least one point, say $\mathbf{x}_0 \in \mathfrak{Re}\CC^n$.

Since $u$ is holomorphic, we may furthermore assume that the following property holds for at least one point $p \in \partial D^2$ that maps to $\mathbf{x}_0$. There exists some $\epsilon>0$ for which
\begin{equation} \label{eq:behaviour} u(e^{i\theta}p) \in \begin{cases} \widetilde{L}_{\OP{Wh}}^- \subset \CC^n, & 0>\theta>-\epsilon,\\
\{\mathbf{x}_0\} \in \CC^n, & \theta=0,\\
\widetilde{L}_{\OP{Wh}}^+ \subset \CC^n, & \epsilon>\theta>0.
\end{cases}\end{equation}

We will now show that the projection $\pi_{\mathbf{x}_1} \circ u$ must vanish whenever $\CC\mathbf{x}_1$ is in the orthogonal complement of $\CC\mathbf{x}_0$. From this it follows that $u$ is a map of the required form.

Too see this claim, observe that the holomorphic map $\pi_{\mathbf{x}_1} \circ u$ vanishes at the above boundary point $p \in \partial D^2$ that maps to $\mathbf{x_0}$. Moreover, the behaviour~(\ref{eq:behaviour}) of $u$ near $p$ implies that, unless $\pi_{\mathbf{x}_1} \circ u$ vanishes constantly, one of $\pm\pi_{\mathbf{x}_1} \circ u$ maps the oriented boundary near $p$ as depicted by the arrow in Figure \ref{fig:whitney}. In the latter case, the open mapping theorem can be used to get a contradiction with the fact that $\pi_{\mathbf{x}_1} \circ u(D^2 \setminus \partial D^2) \subset \OP{int} \pi_{\mathbf{x}_1}(\widetilde{L}_{\OP{Wh}})$. In conclusion, $\pi_{\mathbf{x}_1} \circ u \equiv 0$.

Finally, the transversality of this moduli space follows by an argument similar to the proof of Lemma \ref{lem:transversedisc}.
\end{proof}

\section{Background}
\label{sec:background}
In this section we give an outline of the theory of Legendrian contact homology \cite{IntroSFT,DiffAlg,ContHomP} as well as certain constructions from relative symplectic field theory \cite{RationalSFT}.

Both the construction and the invariance of Legendrian contact homology have been worked out in the case $(J^1\RR,\lambda_0)$ by Chekanov \cite{DiffAlg}, and in more general contactisations $(P \times \RR,dz+\theta)$ by Ekholm, Etnyre and Sullivan \cite{ContHomP}, where $(P,d\theta)$ is an exact symplectic manifold having finite geometry at infinity.

The above versions of Legendrian contact homology are defined by counting pseudo-holomorphic discs in the symplectic manifold $(P,d\theta)$ having boundary on the projection of $\Lambda$ (which is an exact Lagrangian immersion). We will be using the version of Legendrian contact homology obtained as a special case of the (more general) invariant of a Legendrian submanifold called relative symplectic field theory \cite{RationalSFT}, which is defined by counting punctured pseudo-holomorphic disc in the symplectisation $\RR \times Y$ having boundary on $\RR \times \Lambda$. In this setting (using current technology) we are able to consider a larger, but still quite restrictive, class of contact manifolds $(Y,\lambda)$ --- this includes certain closed contact manifolds.

It should also be noted that the two a priori different counts of pseudo-holomorphic discs for a Legendrian submanifold of a contactisation $(P \times \RR,dz+\theta)$ described above give rise to the same invariant, as was shown in \cite{Lifting}.

\subsection{Technical assumptions made on the contact manifold}
\label{sec:assumptions}
First, we will always make the genericity assumption that every periodic Reeb orbit on $(Y,\lambda)$ is transversely cut out in the sense that the linearised Poincar\'{e} return map has no eigenvalue equal to one. Observe that this is the case for a generic choice of contact form $\lambda$.

Second, we will only consider contact manifolds of one of the following two types:
\begin{enumerate}
\item $(Y,\lambda)$ is closed, every periodic Reeb orbit is non-degenerate, and each finite-energy $J_{\OP{cyl}}$-holomorphic plane in $(\RR \times Y,d(e^t\lambda))$ has expected dimension at least two for a cylindrical almost complex structure $J_{\OP{cyl}}$.
\item $(Y,\lambda)=(P \times \RR, dz+\theta)$, where $z$ denotes the coordinate on the $\RR$-factor, and $(P,d\theta)$ is an exact symplectic manifold having finite geometry at infinity for some compatible almost complex structure (see \cite[Definition 2.1]{ContHomP}). This contact manifold is the so-called \emph{contactisation} of the exact non-closed symplectic manifold $(P,d\theta)$.
\end{enumerate}
We refer to Remark \ref{rem:lchsft} below and \cite[Appendix B.2]{RationalSFT} for the argument why these conditions are sufficient in order to define Legendrian contact homology.

\begin{rmk} \begin{enumerate}
\item The first condition is automatically satisfied when there are \emph{no contractible Reeb orbits} --- in which case $(Y,\lambda)$ is called \emph{hypertight}. In general the expected dimension can be expressed in terms of the Conley-Zehnder indices of the contractible Reeb orbits of $(Y,\lambda)$; see e.g.~\cite[Proposition 1.7.1]{IntroSFT}. 
\item The Reeb vectorfield of the contactisation is given by $R=\partial_z$ for the above choice of contact form and, in particular, there are no periodic Reeb orbits.
\end{enumerate}
\end{rmk}

Moreover, in addition to the above conditions, for simplicity we will also make the assumption that the contact manifold $(Y,\xi=\ker\lambda)$ has vanishing first Chern class.
\begin{eg} We now give important examples of contact manifolds satisfying the above conditions.
\begin{enumerate}

\item The standard contact $(2n+1)$-sphere $(S^{2n+1},\lambda)$, where $S^{2n+1} \subset \CC^{2(n+1)}$ is the unit-sphere, is a contact manifold of the first kind given that we choose
\[\lambda:=\frac{1}{2}\sum_{i=1}^{n+1} a_i(x_idy_i-y_idx_i)\]
for rationally independent $a_i > 0$, $i=1,\hdots,n+1$.
\item 
The jet-space $J^1M$ of a smooth (possibly non-closed) manifold $M$ can be endowed with the contact structure
\[ (J^1M \simeq T^*M \times \RR, \lambda_0), \quad \lambda_0 := dz+\theta_M,\]
where $z$ is the coordinate on the $\RR$-factor and $-\theta_M$ is the Liouville form on $T^*M$. This is the contactisation of the exact symplectic manifold $(T^*M,d\theta_M)$, which furthermore can be seen to have finite geometry at infinity.

Specialising to the case $M=\RR^n$ and $\theta_{\RR^n}=-\sum_{i=1}^n y_i dx_i$, where $x_i$ are coordinates on $\RR^n$ and $y_i=df(\partial_{x_i})$ are the induced coordinates on $T^*\RR^n$, we obtain the so-called standard contact $(2n+1)$-space.
\end{enumerate}
\end{eg}

\subsection{Definition of the Chekanov-Eliashberg algebra}

For a closed Legendrian submanifold $L \subset (Y,\lambda)$ we will use $\mathcal{Q}(L)$ to denote the set of Reeb-chords on $L$. A Reeb chord $c$ has an associated \emph{action} given by
\[ \ell(c):=\int_c \lambda>0.\]

We will make the following genericity assumptions for the Reeb chords on $L$. The flow $\phi^t_R \colon (Y,\lambda) \to (Y,\lambda)$ induced by $R$ (which necessarily preserves $\lambda$), is required to satisfy the property that $D\phi^T_R(T_pL)$ and $T_{\phi^T_R(p)}L$ intersect transversely inside $\xi_{\phi^T_R(p)}$ for every Reeb chord $[0,T] \ni t \mapsto \phi^t_R(p)$ on $L$. We will also assume that the starting point of a Reeb chord is different from its endpoint, i.e.~that no Reeb chord is the part of a periodic Reeb orbit.

A Legendrian submanifold $L$ satisfying the above property is called \emph{chord generic}. In particular, the Reeb chords on a closed chord generic Legendrian submanifold comprise a discrete set and consequently $\{ c\in \mathcal{Q}(L); \ \ell(c) \le M\}$ is finite for every $M\ge 0$. Observe that (given that the periodic Reeb orbits are generic) the above transversality can be achieved after an arbitrarily $C^1$-small Legendrian perturbation, i.e.~a small isotopy through Legendrian embeddings.

\subsubsection{The algebra and the grading}
\label{sec:grading}

The underlying unital graded algebra of the DGA is defined as follows. We let 
\[ \mathcal{A}(L) = \ZZ_2 \langle \mathcal{Q}(L)\rangle\]
be the unital non-commutative algebra over $\ZZ_2$ which is freely generated by the Reeb chords $\mathcal{Q}(L)$ on $L$.

We say that a Reeb chord $c$ on $L$ is \emph{pure} if it has both its endpoints on the same component. Otherwise, we say that it is \emph{mixed}.

To grade a generator $c$ corresponding to a pure Reeb chord we fix a \emph{capping path} $\gamma_c$ for $c$ on $L$, by which we mean a path on $L$ which starts at the end-point of $c$, and ends at the starting-point of $c$. We grade each Reeb-chord generator $c$ by the formula
\[ |c|:=\nu(\Gamma_c)-1 \in \ZZ/\mu(H_1(L)),\]
where $\nu(\Gamma_c)$ denotes the \emph{Conley-Zehnder index} of the path of Lagrangian tangent planes $TL \subset \xi$ along $\gamma_c$, and where $\mu \in H^1(L;\ZZ)$ is the \emph{Maslov class}. We refer to \cite[Section 2.2]{ContHomR} for the definitions.

In the case when $c$ is mixed, the grading is only well-defined after making additional choices. We proceed by making the following modification of the notion of a capping path. Fix two components $L_1, L_2 \subset L$ and a path $\gamma \subset Y$ connecting a point $p_1 \in L_1$ with a point $p_2 \in L_2$. We also choose a path $\Gamma$ of Lagrangian tangent planes in $\xi$ along $\gamma$ starting at $T_{p_2}L_2\subset \xi$ and ending at $T_{p_1}L_1 \subset \xi$. We define a capping path of a mixed Reeb chord $c$ starting at $L_1$ and ending at $L_2$ to be a path $\gamma_2$ from the end-point of $c$ to $p_2$, followed by $\gamma$, and ultimately followed by a path $\gamma_2$ from $p_1$ to the starting-point of~$c$.

Finally, we define the grading by applying the above formula to the path $\Gamma_c$ of Lagrangian tangent planes obtained by taking the path of tangent planes $TL_2 \subset \xi$ along $\gamma_2$, followed by the path $\Gamma$ chosen above, and ultimately followed by the path of tangent planes $TL_1 \subset \xi_1$ along $\gamma_1$.

In the case $(J^1M,\lambda_0)$, the following description of the Conley-Zehnder index and Maslov class is given in \cite[Lemma 3.4]{NonIsoLeg}. The canonical projection
\[\Pi_{\OP{F}} \co J^1M \to M \times \RR \]
is called the \emph{front projection}. We may moreover assume that $L$ is generic under the front projection, which implies that the singularities of codimension one of $\Pi_{\OP{F}}(L)$ consist of self-intersections and cusp-edges.

Suppose that $c$ is a Reeb chord on $L$ contained above a point $p \in M$. Let $f_s$ and $f_e$ be the smooth real-valued functions corresponding to the $z$-coordinates of the sheets of $L$ at the starting-point and end-point of the Reeb chord $c$, respectively, which are well-defined in a neighbourhood of $p \in M$. Phrased differently, in a neighbourhood of $p \in M$ the front projection of the two sheets of $L$ containing the endpoints of $c$ are given as the graphs of $f_s$ and $f_e$, respectively.

The Conley-Zehnder index of the path $\Gamma_c$ of Lagrangian tangent planes along $\gamma_c$ can now be expressed as
\begin{equation} \label{eq:cz} \nu(\Gamma_c)=D(\gamma_c)-U(\gamma_c)+\mathrm{index}_p(f_e-f_s)-1,
\end{equation}
where $D(\gamma_c)$ and $U(\gamma_c)$ denote the number of cusp-edges in the front projection traversed by (a generic perturbation of) $\gamma_c$ in downward and upward direction, respectively, and where $\mathrm{index}_p(f_e-f_s)$ denotes the Morse index of the function $f_e-f_s$ at $p$.

Similarly, the Maslov class evaluated on $[\gamma] \in H_1(L)$ can be computed by the formula
\begin{equation} \label{eq:maslov} \mu([\gamma])=D(\gamma)-U(\gamma),\end{equation}
where $\gamma$ is a generic smooth closed curve on $L$, and where $D$ and $U$ are as above.

\subsubsection{The differential}
\label{sec:differential}

Fix a cylindrical almost complex structure $J_{\OP{cyl}}$ on the symplectisation $\RR \times Y$. For a Reeb-chord generator $a \in \mathcal{Q}(L)$, we define
\[ \partial(a):=\sum_{|a|-|\mathbf{b}|+\mu(A)=1} |\mathcal{M}_{a;\mathbf{b};A}(\RR \times L;J_{\OP{cyl}})/\RR|\mathbf{b},\]
where $\mathcal{M}_{a;\mathbf{b};A}(\RR \times L;J_{\OP{cyl}})$ denotes  the moduli space of $J_{\OP{cyl}}$-holomorphic discs defined in Section \ref{sec:bgmoduli} below, and where the sum is taken over all possible words $\mathbf{b}=b_1 \cdots  b_m$ of Reeb chords (including the empty word) and all homology classes $A \in H_1(L)$.

Observe that, since $J_{\OP{cyl}}$ is cylindrical, these spaces have a natural $\RR$-action induced by translation of the $t$-coordinate.

Here we assume that $J_{\OP{cyl}}$ is chosen so that the above moduli spaces all are transversely cut-out, and hence of their expected dimensions (see Section \ref{sec:bgtrans} below). In general, we will call such an almost complex structure \emph{regular}, where it should be understood from the context to which moduli spaces this refers.

That this count is well-defined follows from the Gromov-Hofer compactness theorem for these moduli spaces, which is outlined in Section \ref{sec:bgcomp} below. Note that the compactness theorem applies for the moduli spaces appearing in the definition of $\partial(a)$ since, for a fixed $a \in \mathcal{Q}(L)$, the total energy of a solution in a moduli space $\mathcal{M}_{a;\mathbf{b};A}(\RR \times L;J_{\OP{cyl}})$ is bounded from above by $2\ell(a)$ according to Proposition \ref{prop:energy}.

\enlargethispage{1.5em}
Finally, we extend the differential to all of $\mathcal{A}(L)$ via the Leibniz rule
\[ \partial(ab)= \partial(a)b+a\partial(b).\]
The dimension formula below shows that
\[\dim \mathcal{M}_{a;\mathbf{b};A}(\RR \times L;J_{\OP{cyl}})=|a|-|\mathbf{b}|+\mu(A)\]
and that $\partial$ hence is of degree $-1$.

The following standard argument shows that $\partial^2=0$. The compactness result together with pseudo-holomorphic gluing shows that the coefficient in front of the word $\mathbf{b}$ in the expression $\partial^2(a)$ is given by the number of boundary points of the compact one-dimensional moduli space
\[\bigcup_{\dim \mathcal{M}_{a;\mathbf{b};A}(\RR \times L;J_{\OP{cyl}}) =2} \mathcal{M}_{a;\mathbf{b};A}(\RR \times L;J_{\OP{cyl}})/\RR.\]
In particular this coefficient is even, and hence vanishing.

\begin{rmk}
\label{rem:lchsft} Recall that we assume that our contact manifolds satisfy the property that every $J_{\OP{cyl}}$-holomorphic plane in $\RR \times Y$ is of expected dimension at least two. The reason is to prevent the following potential scenario. A priori, a one-dimensional moduli space as above could have a boundary point corresponding to a broken configuration consisting of a (possibly broken) $J_{\OP{cyl}}$-holomorphic disc together with one or more (possibly broken) $J_{\OP{cyl}}$-holomorphic planes (i.e.~one punctured $J_{\OP{cyl}}$-holomorphic spheres). By the additivity of the formula for expected dimension, together with the assumptions on $(Y,\lambda)$ made in Section \ref{sec:assumptions}, the configuration must contain a $J_{\OP{cyl}}$-holomorphic disc of negative expected dimension. However, the existence of such a disc would contradict the above transversality assumption, and hence this kind of breaking does not occur. This is important, since our boundary operator does not take any $J_{\OP{cyl}}$-holomorphic planes into account.
\end{rmk}

One finally defines the \emph{Legendrian contact homology of $L$} to be the homology
\[HC_\bullet(L):=H_\bullet (\mathcal{A}(L),\partial)\]
of the Chekanov-Eliashberg algebra.

\subsubsection{The DGA morphism induced by an exact Lagrangian cobordism and invariance}
\label{sec:cobchain}
Let $V \subset \RR \times Y$ be an exact Lagrangian cobordism from $L_-$ to $L_+$ which is cylindrical outside of the subset $[A,B] \times Y$. We will describe the associated unital DGA morphism
\[ \Phi_V \co (\mathcal{A}(L_+),\partial_+) \to (\mathcal{A}(L_-),\partial_+).\]

Suppose that $\partial_\pm$ is defined using a cylindrical almost complex structure $J_\pm$. Choose a compatible almost complex structure $J$ on $V$ which coincides with the cylindrical almost complex structure $J_+$ and $J_-$ in the sets $[B,+\infty) \times Y$ and $(-\infty,A] \times Y$, respectively. 

The map $\Phi$ is defined on the Reeb-chord generator $a \in \mathcal{Q}(L_+)$ by the formula
\[\Phi_V(a)=\sum_{|a|-|\mathbf{b}|+\mu(A)=0} |\mathcal{M}_{a;\mathbf{b};A}(V;J)|\mathbf{b},\]
where the sum is taken over all words $\mathbf{b}=b_1 \cdots  b_m$ of Reeb chords in $\mathcal{Q}(L_-)$ (including the empty word) and homology classes $A \in H_1(V)$. Here we assume that $J$ is chosen so that the above moduli spaces are transversely cut out (see Section \ref{sec:bgtrans} below), i.e.~that $J$ is regular.

The Gromov-Hofer compactness implies that the above sum is well-defined. Note that the compactness result applies for the moduli spaces appearing in the definition of $\Phi_V(a)$ since the total energy of a solution in $\mathcal{M}_{a;\mathbf{b};A}(V;J)$ is bounded from above by $2e^{B-A} \ell(a)$ according to Proposition~\ref{prop:energy}.

We extend $\Phi_V$ to a unital algebra map. The dimension formula
\[\dim \mathcal{M}_{a;\mathbf{b};A}(V;J)=|a|-|\mathbf{b}|+\mu(A) \]
shows that $\Phi$ is of degree zero. Observe that we are required to use a grading in the group
\[ \ZZ / \mu(H_1(V))\]
for the Chekanov-Eliashberg algebras of both $L_+$ and $L_-$ in order for $\Phi_V$ to respect the grading. Moreover, the gradings of the mixed chords have to be chosen with some care.

The fact that $\Phi_V$ is a chain-map now follows similarly to the proof that $\partial^2=0$. Namely, the coefficient in front of the word $\mathbf{b} \in \mathcal{A}(L_-)$ in the expression $(\partial_- \circ \Phi_V-\Phi_V \circ \partial_+)(a)$ is given by the number of boundary points of the compact one-dimensional moduli space
\[\bigcup_{\dim \mathcal{M}_{a;\mathbf{b};A}(V;J)=1} \mathcal{M}_{a;\mathbf{b};A}(V;J).\]
In particular these coefficients are even, and thus vanishing.

\begin{eg}
\label{ex:id}
A direct consequence of Example \ref{ex:trivial} below is that $\Phi_{\RR \times L}=\Id_{\mathcal{A}(L)}$ in the case when a cylindrical almost complex structure has been used to define the DGA morphism.
\end{eg}

\begin{thm}[\cite{RationalSFT}]
\label{thm:morphisminvariance}
Let $V \subset Y$ be an exact Lagrangian cobordism from the Legendrian submanifold $L_-$ to $L_+$. For a regular almost complex structure as above, the induced map
\[ \Phi_V\co (\mathcal{A}(L_+),\partial_+) \to (\mathcal{A}(L_-),\partial_-)\]
is a DGA morphism whose homotopy-class is invariant under compactly supported deformations of the almost complex structure and compactly supported Hamiltonian isotopies of $V$.
\end{thm}
This result follows from the more general invariance for relative symplectic field theory proven in \cite[Section 4]{RationalSFT}. Also, see \cite[Lemma 3.13]{LegKnotsLagCob} for a description that carries over to our setting.

\begin{rmk}
The proof of this invariance theorem contains one substantial difficulty. One must take into account pseudo-holomorphic discs of formal dimension $-1$, which arise in generic one-parameter families of moduli spaces. The abstract perturbation scheme outlined in \cite[Section 4]{RationalSFT} is crucial for getting control of the situation. Again, for the same reason as in Remark \ref{rem:lchsft}, our assumptions on $(Y,\lambda)$ make it possible to disregard pseudo-holomorphic planes from the analysis.
\end{rmk}

The above invariance theorem is also the main ingredient in the proof of the following invariance result for Legendrian contact homology.

\begin{thm}[\cite{RationalSFT}]
\label{thm:invariance}
Let $L \subset Y$ be a Legendrian submanifold. The homotopy type of its Chekanov-Eliashberg algebra $(\mathcal{A}(L),\partial)$ is
independent of the choice of a regular cylindrical almost complex structure, and invariant under Legendrian isotopy. In particular, $HC_\bullet(L)$ is a Legendrian isotopy invariant.
\end{thm}
\begin{proof}[Sketch of proof]
Let $V_1$ be an exact Lagrangian cobordism from $L_-$ to $L$, and $V_2$ be an exact Lagrangian cobordism form $L$ to $L_+$. After a suitable translation, one can form their concatenation
\[V_1 \odot V_2 := (V_1 \cap \{t \le 0\}) \cup (V_2 \cap \{t \ge 0\})\]
which is an exact Lagrangian cobordism from $L_-$ to $L_+$. Observe that $V_1 \odot V_2$ is diffeomorphic to the cobordism obtained by gluing $V_1$ to $V_2$ along their common end.

Suppose that $L_1$ is Legendrian isotopic to $L_2$ and fix such an isotopy. Arguing as in \cite[Theorem 1.1]{LagrConc} or \cite[Lemma A.2]{RationalSFT}, the isotopy induces Lagrangian cobordisms $U,V,W \subset \RR \times Y$, each diffeomorphic to cylinders, satisfying
\begin{itemize}
\item $V$ is an exact Lagrangian cobordism from $L_1$ to $L_2$ which, moreover, is smoothly isotopic to the trace of the isotopy from $L_1$ to $L_2$.
\item $U,W$ are exact Lagrangian cobordisms from $L_2$ to $L_1$ which, moreover, are smoothly isotopic to the trace of the above isotopy in inverse time.
\item The concatenations $U \odot V$ and $V \odot W$ are exact Lagrangian cobordisms isotopic to $\RR \times L_2$ and $\RR \times L_1$, respectively, by compactly supported Hamiltonian isotopies.
\end{itemize}

Using the splitting construction \cite[Section 3.4]{CompSFT} together with the compactness result one can show the following, given that we use suitable almost complex structures. First, we have the identities
\begin{gather*}
\Phi_{U \odot V}=\Phi_U \circ \Phi_V, \\
\Phi_{V \odot W}=\Phi_V \circ \Phi_W,
\end{gather*}
while, using Theorem \ref{thm:morphisminvariance} together with Example \ref{ex:id}, we get that
\begin{gather*}
\Phi_{U \odot V}\sim \Phi_{\RR \times L_2} \sim \Id_{\mathcal{A}(L_2)}, \\
\Phi_{V \odot W}\sim \Phi_{\RR \times L_1} \sim \Id_{\mathcal{A}(L_1)}.
\end{gather*}
In other words, $\Phi_U$ and $\Phi_W$ are left and right homotopy inverses, respectively, of
\[ \Phi_V \co (\mathcal{A}(L_1),\partial_1) \to (\mathcal{A}(L_2),\partial_2).\vspace{-2em}\]
\end{proof}

\subsubsection{Tame isomorphisms}
Let $\mathcal{A}$ and $\mathcal{A'}$ be unital algebras over $\ZZ_2$ which are freely generated by the sets of generators $\{ a_i \}_{i \in I}$ and $\{ a_i' \}_{i \in I}$, respectively. A unital isomorphism $\Phi \co \mathcal{A} \to \mathcal{A}'$ of DGAs is called \emph{tame} if, after some identification of the generators of $\mathcal{A}$ and $\mathcal{A}'$, it can 
be written as a composition of \emph{elementary automorphisms}, i.e.~automorphisms which are of the form
\[\Psi(a_i)= \begin{cases} a_i, &  i \neq j,
\\  a_j+B, & i = j, \end{cases} \]
for some fixed $j$, where $B$ is an element of the unital subalgebra generated by $\{a_i ; \  i \neq j \}$.

\subsection{Definitions of the moduli spaces}
\label{sec:bgmoduli}
In this section we give an overview of the moduli spaces appearing in the above constructions. We also recall some important properties.

In the following we let $V \subset \RR \times Y$ be an exact Lagrangian cobordism from the Legendrian submanifold $L_-$ to $L_+$ which is cylindrical outside of the subset $I \times Y$, where $I=[A,B]$. We allow the case when $L_-=\emptyset$, as well as the case when $I= \emptyset$ and $V=\RR \times L$. For each Reeb chord in $\mathcal{Q}(L_\pm)$ we will fix a capping path on $L_\pm$ as in Section \ref{sec:differential} above. In case when $V= \RR \times L$ we will moreover use the same capping paths on both ends of $V$.

Observe that a Reeb chord $c$ has a natural parametrisation
\begin{gather*}
\eta_c(t) \co [0,\ell(c)] \to c \subset Y, \\
\dot{\eta_c}(t)=R_{\eta_c(t)},
\end{gather*}
by the Reeb flow.

Let $\dot{D}^2=D^2 \setminus P$, be the closed unit disc with $m+1$ fixed boundary points $P \subset \partial D^2$ removed and let
\[u=(a,v) \co (\dot{D}^2,\partial\dot{D}^2) \to (\RR \times Y,V)\]
be a continuous map. For a conformal structure on $\dot{D}^2$, there is an induced conformal identification of
\[[0,+\infty) \times [0,1] \subset \CC=\{ s+it \}\]
with a set of the form $\overline{U} \setminus \{p\} \subset \dot{D}^2$, where $\overline{U} \subset D^2$ is a compact neighbourhood of $p \in P$.
\begin{defn}
\begin{enumerate}
\item We say that $p \in P$ is a \emph{positive puncture asymptotic to the Reeb chord $c \in \mathcal{Q}(L_+)$} if there exists a $S_0 \in \RR$ for which
\[\lim_{s \to +\infty}( a(s,t)-s/\ell(c),v(s,t))=(S_0,\eta_c(t/\ell(c)))\]
uniformly in the above coordinates (given some choice of metric on~$Y$).
\item We say that $p \in P$ is a \emph{negative puncture asymptotic to the Reeb chord $c \in \mathcal{Q}(L_-)$} if there exists a $S_0 \in \RR$ for which
\[\lim_{s \to +\infty}( a(s,t)+s/\ell(c),v(s,t))=(S_0,\eta_c((1-t)/\ell(c))) \]
uniformly in the above coordinates (given some choice of metric on~$Y$).
\end{enumerate}
\end{defn}

Consider the compactification $\overline{V}$ of $V$ obtained by first making the topological identification $V \simeq V \cap ((A-1,B+1) \times Y)$ and then taking
\[\overline{V}:=\mathrm{cl}(V \cap ((A-1,B+1) \times Y)) \subset [A-1,B+1] \times Y.\]
Let $u$ be a map with the asymptotic properties as above. By abuse of notation, we say that the boundary of $u$ is in homology class $A \in H_1(V)$ if closing up the boundary in $\overline{V}$ using the capping paths produces a cycle in class $A' \in H_1(\overline{V})$, such that $A$ is identified with $A'$ under the isomorphism $H_1(V) \to H_1(\overline{V})$ induced by the inclusion.

Consider a compatible almost complex structure $J$ on $(\RR \times Y,d(e^t\lambda))$ which is cylindrical outside of $I \times Y$. Let $a$ be a Reeb chord on $L_+$, $\mathbf{b}=b_1 \cdots  b_m$ a word of Reeb chords on $L_-$ (we allow the empty word), $A \in H_1(V)$ a homology class, and $\dot{D}^2=D^2 \setminus \{ p_0,\hdots,p_m\}$ the closed unit disc with $m+1$ boundary points removed. We require the punctures to be ordered by $p_1<\cdots<p_m$ relative the order on $\partial D^2 \setminus \{p_0\}$ induced by the orientation.
\begin{defn}
We use $\mathcal{M}_{a;\mathbf{b};A}(V;J)$ to denote the moduli space of $J$-holomorphic maps
\[u =(a,v) \co (\dot{D}^2,\partial \dot{D}^2) \to (\RR \times Y,V),\]
i.e.~solutions to the non-linear Cauchy-Riemann equation
\[ du+J du\circ i=0\]
for some conformal structure $(\dot{D}^2,i)$, for which
\begin{itemize}
\item $p_0$ is a positive boundary puncture asymptotic to $a \in \mathcal{Q}(L_+)$,
\item $p_i$ is a negative boundary puncture asymptotic to $b_i \in \mathcal{Q}(L_-)$ for $i=1,\hdots,m$, and
\item the boundary of $u$ is in homology class $A \in H_1(V)$.
\end{itemize}
Moreover, we identify two such solutions which are related by a (conformal) reparametrisation of the domain.
\end{defn}

\begin{eg}
For any cylindrical almost complex structure $J_{\OP{cyl}}$ and $V= \RR \times L$, each trivial strip $\RR \times c$ being the cylinder over a Reeb chord $c \in \mathcal{Q}(L)$ is an element of $\mathcal{M}_{c;c;0}(\RR \times L;J_{\OP{cyl}})$.\end{eg}

\begin{rmk}
In the case when $V=\RR \times L$ and when the almost complex structure is cylindrical, translations of the $t$-coordinate induces an action by $\RR$ on the above moduli spaces.
\end{rmk}

\begin{rmk}
An elementary application of Stoke's theorem shows the following.
Since $V \subset (\RR \times Y,d(e^t\lambda))$ is exact there are no non-constant $J$-holomorphic curves in $\RR \times Y$, either closed or with boundary on $V$, having compact image. Moreover, any $J$-holomorphic disc as above must have at least one positive puncture. The latter fact can be seen to follow from either part (2) or (3) of Proposition \ref{prop:energy} below.
\end{rmk}

\subsubsection{Energies of pseudo-holomorphic curves}

Let $\varphi_I(t) \co \RR \to \RR_{\geq 0}$ be the continuous and piecewise smooth function satisfying $\varphi_I(t)=e^t$ for $t \in I$ and $\varphi'(t)=0$ for $t \notin I$. We let $\mathcal{C}_I^-$ consist of the smooth functions $\rho\co \RR \to\RR_{\geq 0}$ supported in $\{ t \le A\}$ and satisfying $\int_\RR \rho(t)=1$. Similarly, we let $\mathcal{C}_I^+$ consist of the smooth functions $\rho\co \RR \to\RR_{\geq 0}$ supported in $\{ t \ge B\}$ and satisfying $\int_\RR \rho(t)=e^{B-A} \ge 1$. Finally, we define $\mathcal{C}_I:=\mathcal{C}_I^+ \cup \mathcal{C}_I^-$.
\begin{defn}
For a smooth map
\[u \co (\dot{D}^2,\partial \dot{D}^2) \to (\RR \times Y,V),\]
satisfying the above asymptotic properties near its punctures, we define the quantities
\begin{align*}
&E_{d\lambda}(u):= \int_u d\lambda, \\
&E_{d(\varphi_I\lambda)}(u):= e^{-A}\int_u d(\varphi(t)\lambda),\\
&E_{\lambda,I}(u):=\sup_{\rho \in \mathcal{C}_I} \int_u \rho(t) dt \wedge \lambda,\\
&E_I(u):=E_{d(\varphi_I\lambda)}(u)+E_{\lambda,I}(u),
\end{align*}
which will be called the \emph{$d\lambda$-energy, $d(\varphi_I\lambda)$-energy, $\lambda$-energy, and total energy}, respectively. 
\end{defn}

Observe that in the case when $V=\RR \times L$ is cylindrical and $I=\emptyset$, we have $\varphi_I(t) \equiv 1$ and the $d\lambda$-energy thus coincides with the $d(\varphi_I\lambda)$-energy. The following properties of the above energies are standard, see e.g.~\cite[Lemma B.3]{RationalSFT}.
\begin{prop}
\label{prop:energy}
Let $u \co (\dot{D}^2,\partial \dot{D}^2) \to (\RR \times Y,V)$ be a smooth map as above with positive punctures asymptotic to
\[a_1, \hdots, a_{m_+} \in \mathcal{Q}(L_+)\]
and negative punctures asymptotic to
\[b_1, \hdots, b_{m_-} \in \mathcal{Q}(L_-).\]
Here we allow either of $m_+$ and $m_-$ to vanish. We assume that $V$ is an exact Lagrangian cobordism which is cylindrical outside of the subset $I \times Y$, where $I=[A,B]$.
\begin{enumerate} 
\item In the case when $V=\RR \times L$ we have
\[E_{d\lambda}(u)=\sum_{i=1}^{m_+} \ell(a_i)-\sum_{i=1}^{m_-}\ell(b_i).\]
If $u$ is $J$-holomorphic for a cylindrical $J$, it moreover follows that $E_{d\lambda}(u) \ge 0$ where equality holds if and only if the image of $u$ is contained in $\RR \times c$ for a Reeb chord $c$.
\item We have
\[E_{d(\varphi_I\lambda)}(u)=e^{B-A}\sum_{i=1}^{m_+} \ell(a_i)-\sum_{i=1}^{m_-} \ell(b_i).\]
Given that $u$ is $J$-holomorphic, where $J$ is cylindrical outside of $I \times Y$, it moreover follows that
\[E_{d(\varphi_I\lambda)}(u) \ge 0.\]
Again, equality holds if and only if the image of $u$ is contained in $\RR \times c$ for a Reeb chord $c$.
\item If $u$ is $J$-holomorphic for a compatible almost complex structure $J$ which is cylindrical outside of $I \times Y$, it follows that
\[0< E_{\lambda,I}(u) = e^{B-A}\sum_{i=1}^{m_+} \ell(a_i).\]
\end{enumerate}
\end{prop}
\begin{proof}
(1): This follows by an elementary application of Stoke's theorem. Observe that $\lambda$ vanishes on $V=\RR \times L$.

(2): By the assumptions on $V$ it follows that that $\varphi_I(t)\lambda$ pulls-back to an exact one-form on $V$ which moreover has a primitive that is constant when restricted to either of the subsets $V \cap \{t \le A\}$ and $V \cap \{t \ge B \}$. The equality then follows by Stoke's theorem.

(3): Since $V$ is exact $u$ must have at least one boundary puncture. Using its asymptotic behaviour, one easily computes $E_{\lambda,I}(u)>0$. We proceed to calculate the expression of $E_{\lambda,I}(u)$ in terms of the action of the asymptotic Reeb chords.

Consider a compactly supported bump-function $\rho(t) \in \mathcal{C}_I^+$. Using the fact that $\int_\RR \rho(t) dt=e^{B-A}$, together with the asymptotic properties of $u$, one computes
\[ \lim_{N \to +\infty} \int_u \rho(t-N) dt \wedge \lambda=e^{B-A}\sum_{i=1}^{m_+} \ell(a_i).\]
In other words, we have the inequality
\[ E_{\lambda,I}(u) \ge e^{B-A}\sum_{i=1}^{m_+} \ell(a_i).\] 

It remains to show the opposite inequality. We start by taking $\rho(t) \in \mathcal{C}_I^-$, which we use to define the continuous function
\begin{align*}
&P(t) :=\int_{-\infty}^t (\rho(s)+\chi_I(s)e^{-A}e^s)ds,\\
&\chi_I(s):=\begin{cases} 1, & s \in I=[A,B],\\
0, & s \notin I=[A,B].
\end{cases}
\end{align*}
Observe that $P(t)=e^{-A}e^t$ for $t \in [A,B]$, while $P(t)=e^{B-A}$ for $t \ge B$.

It follows that
\[\int_u \rho(t) dt \wedge \lambda = \int_u d(P(t)\lambda)-\int_u (\chi_I(t) e^tdt \wedge \lambda +  P(t)d\lambda) \le \int_u d(P(t)\lambda),\]
where the latter inequality holds since the two-form
\[ \chi_I(t) e^{-A}e^tdt \wedge \lambda +  P(t)d\lambda \]
is positive on $J$-complex lines (here we must use the assumption that $J$ is cylindrical outside of $I \times Y$). We thus get
\[ \int_u \rho(t) dt \wedge \lambda \le \int_u d(P(t)\lambda)=\int_{\partial u} P(t)\lambda+e^{B-A}\sum_{i=1}^{m_+}\ell(a_i),\]
where the last equality follows from the fact that $\lim_{t \to -\infty}P(t)\lambda=0$, while $\lim_{t \to +\infty}P(t)\lambda=e^{B-A}\lambda$.

By the assumptions on $V$ it follows that $P(t)\lambda$ pulls back to an exact one-form on $V$ which has a primitive that is constant when restricted to either of the subsets $V \cap \{t \le A\}$ and $V \cap \{t \ge B\}$. From this it follows that
\[ \int_u \rho(t) dt \wedge \lambda \le  \int_{\partial u} P(t)\lambda+e^{B-A}\sum_{i=1}^{m_+}\ell(a_i)= e^{B-A}\sum_{i=1}^{m_+} \ell(a_i). \]

For $\rho(t) \in \mathcal{C}_I^+$, if we instead take $P(t):=\int_{-\infty}^t \rho(s)ds$, the above calculation gives the same upper bound. In conclusion, we have the inequality
\[E_{\lambda,I}(u) \le e^{B-A}\sum_{i=1}^{m_+} \ell(a_i) \]
which, together with the opposite inequality proven above, finally gives
\[E_{\lambda,I}(u) = e^{B-A}\sum_{i=1}^{m_+} \ell(a_i). \vspace{-2em}\]
\end{proof}

\subsubsection{Gromov-Hofer compactness}
\label{sec:bgcomp}
Gromov's famous compactness\linebreak theorem was the starting-point for pseudo-holomorphic curve techniques in symplectic geometry. In its original form \cite[Section 1.5.B]{Gromov} it establishes compactness for the moduli spaces of closed pseudo-holomorphic curves inside a compact symplectic manifold. This was later generalised to the setting in which symplectic field theory is formulated \cite{CompSFT}, where the symplectic manifold is non-compact and the closed pseudo-holomorphic curves are allowed to have interior punctures asymptotic to periodic Reeb orbits. This version of the compactness theorem is sometimes referred to as Gromov-Hofer compactness.

We will need a version of the latter compactness theorem for pseudo-holomorphic curves  with boundary and boundary-punctures asymptotic to Reeb chords. An outline of this result is given in \cite[Section 11.3]{CompSFT}, and it is treated thoroughly in \cite{CompAbbas}.

In order to formulate the compactness theorem we need the concept of a \emph{pseudo-holomorphic building}, which was introduced in \cite[Section 7]{CompSFT}. In our situation, the following less general definition will suit our needs. Again, assume that $J$ is a compatible almost complex structure on $\RR \times Y$ which coincides with the cylindrical almost complex structures $J_-$ and $J_+$ in the sets $\{t \le A\}$ and $\{t \ge B \}$, respectively.

\begin{defn} A \emph{pseudo-holomorphic building} consists of a finite collection of pseudo-holomorphic discs in $\RR \times Y$, where each disc has an associated level $i \in \{i_0,i_0+1,\hdots,i_0+j\}$, $i_0 \le 0$, and a disc in level $i$ is contained in the moduli space
\begin{itemize}
\item $\mathcal{M}_{a;\mathbf{b};A}(V;J)$ for $i=0$,
\item $\mathcal{M}_{a;\mathbf{b};A}(\RR \times L_+;J_+)$ for $i>0$; and
\item $\mathcal{M}_{a;\mathbf{b};A}(\RR \times L_-;J_-)$ for $i<0$.
\end{itemize}
A choice of a bijection between the positive punctures of the discs in the $i$:th level and the negative punctures of the discs in the $(i+1)$:th level for each $i<i_0+j$ is also part of the data. Moreover, if two punctures correspond under this bijection, we require them to be asymptotic to the same Reeb chord. We call a level \emph{trivial} if it consists only of trivial strips, and we assume that there is at most one trivial level.
\end{defn}

By \cite{CompSFT}, \cite{CompAbbas}, any sequence $u_n$ of $J$-holomorphic curves in moduli spaces $\mathcal{M}_{a;\mathbf{b};A}(V;J)$ having a uniform bound on the total energy $E(u_n)\le E$, has a subsequence converging (in the appropriate sense) to a pseudo-holomorphic building as above.

\subsubsection{Transversality}
\label{sec:bgtrans}
One says that a compatible almost complex structure $J$ on $\RR \times Y$ is \emph{regular} if the moduli spaces $\mathcal{M}_{a;\mathbf{b};A}(V;J)$ all are transversely cut out smooth manifolds. Their dimension is then given by
\[ \dim \mathcal{M}_{a;\mathbf{b};A}(V;J) = |a|-|b_1| - \cdots - |b_m|+\mu(A),\]
as follows from the computation of the Fredholm index in \cite[Section 6]{ContHomR} (also see \cite[Proposition 2.3]{ContHomP}). We say that a solution is \emph{rigid} if it lives inside a zero-dimensional transversely cut-out moduli space.

\begin{eg}
\label{ex:trivial}
For a regular cylindrical almost complex structure $J_{\OP{cyl}}$, it follows that a rigid solution in $\mathcal{M}_{a;\mathbf{b};A}(\RR \times L;J_{\OP{cyl}})$ must be translation-invariant, and hence equal to a trivial strip $\RR \times c$.
\end{eg}

Since the above moduli spaces consist of discs with exactly one positive puncture, each solution can be seen to be embedded inside some subset of the form $[N,+\infty) \times Y$ (here we have used the fact that no Reeb chord is contained inside a periodic Reeb orbit). Standard techniques can now be applied to show that these pseudo-holomorphic discs are simple and, hence, that the set of regular almost complex structures $J$ form a Baire subset inside the set of all compatible almost complex structures coinciding with a given cylindrical almost complex structure outside of a compact set; see for example \cite[Theorem 2.14]{FloerConc} together with \cite[Theorem 3.1.6]{JholCurves}.

However, if one wishes to find a regular \emph{cylindrical} almost complex structure, the argument needs to be refined. This was done in \cite{FredholmTheory} for curves without boundary. This result can be generalised to the moduli spaces under consideration here.
\begin{prop}
\label{prop:trans}
\begin{enumerate}
\item There is a Baire set of cylindrical almost complex structures which are regular for the moduli spaces $\mathcal{M}_{a;\mathbf{b};A}(\RR \times L;J_{\OP{cyl}})$ as above.
\item The moduli spaces of the form $\mathcal{M}_{a;\mathbf{b};A}(\RR \times L;J_{\OP{cyl}})$ may be supposed to be transversely cut out after a cylindrical perturbation of $J_{\OP{cyl}}$ supported in an arbitrarily small neighbourhood of $\RR \times a$.
\end{enumerate}
\end{prop}
\begin{proof}
First, observe that the moduli spaces $\mathcal{M}_{a;a;A}(\RR \times L;J_{\OP{cyl}})$ always are transversely out, as follows by an explicit calculation. We thus need to apply a transversality argument for the moduli spaces consisting of non-trivial discs, i.e.~discs that are not contained in a trivial strip.

The proof of \cite[Theorem 4.1]{FredholmTheory}, which shows the first statement in the case when the domain is a Riemann surface without boundary, can be generalised to the current setting. To that end, it is crucial that a solution $u=(\alpha,v)$ satisfies the property that $v$ is somewhere injective in the following sense; there should exist a point $z_0 \in \dot{D}^2 \setminus \partial \dot{D}^2$ for which
\[\begin{cases}\pi_\xi \circ D_{z_0}v \neq 0,\\
v^{-1}(v(z_0))=\{z_0\},
\end{cases}
\]
is satisfied, where $\pi_\xi \co TY \to \xi \subset TY$ denotes the linear projection to the contact-planes along the Reeb vectorfield. To show the second statement of the proposition, it will be enough to infer that $v(z_0)$ may be taken to be arbitrarily close to the Reeb chord $a$.

To find a point $z_0$ satisfying the above, we argue as follows. Let $a \subset Y$ denote the asymptotic Reeb chord of the positive puncture of $u$. Observe that this is the only puncture asymptotic to $a$, as follows by the formula for the $d\lambda$-area given in Proposition \ref{prop:energy} together with the assumption that $u$ is not a trivial strip.

Let $Q \subset \dot{D}^2$ denote the set of points for which $\pi_\xi \circ Dv=0$. Observe that $u$ has positive $d\lambda$-energy by assumption, from which it follows that $Q \subsetneq \dot{D}$. The generalised similarity principle \cite[Theorem 12 in A.6]{SympInvHam} can be applied to any locally defined section $\pi_\xi \circ Dv(X)$, where $X \in TD$ is a locally defined holomorphic vectorfield (see the proof of the second statement of \cite[Theorem 5.2]{PropPseudI}). It follows that the set of limit-points $Q'$ of $Q$ satisfies the property that
\[Q' \cap (\dot{D}^2 \setminus \partial \dot{D}^2) \subset \dot{D}^2 \setminus \partial \dot{D}^2\]
is open. Since this set is closed as well, the non-triviality of $u$ implies that there are no limit-points of $Q$ inside $\dot{D}^2\setminus \partial \dot{D}^2$.

Take a neighbourhood $U \subset \dot{D}^2$ of the positive puncture, where $U$ can be conformally identified with 
\[\{ s +it; \: s \ge 0, \:0 \le t \le 1\} \subset \CC,\]
and where $s+it$ denotes the standard holomorphic coordinate.

By the asymptotic properties of $u$, after possibly shrinking the neighbourhood $U$, we may moreover assume that $u|_U$ is an embedding. In particular, each pseudo-holomorphic disc $u|_{\{ 0 \le s \le A \}}$ satisfies the assumptions of \cite[Theorem 1.14]{PropPseudoIII}. Applying this result we conclude that, for any $A>0$, the subset
\[ \left\{\begin{array}{l} 0 \le s \le A, \\ \pi_\xi \circ D_{s+it} v \neq 0, \\ (v|_{\{0 \le s \le A\}})^{-1}(v(s+it))=\{s+it\}
\end{array}\right\} \subset U \]
is open and dense. From this it now follows that
\[ \left\{\begin{array}{l} \pi_\xi \circ D_{s+it} v \neq 0, \\ (v|_U)^{-1}(v(s+it))=\{s+it\}
\end{array}\right\} \subset U \]
is dense as well.

We will now show the existence of the required point $z_0 \in \dot{D}^2\setminus\partial \dot{D}^2$ assuming the existence of a point $y_0 \in a \setminus v(\dot{D}^2 \setminus U)$, which is a fact that we will establish below. The asymptotic properties of $v$ together with the above property of $v|_U$ shows that there exists some $z_0 \in U \setminus \partial\dot{D}^2$ satisfying $\pi_\xi \circ D_{z_0}v \neq 0$ and $(v|_U)^{-1}(v(z_0))=\{z_0\}$, and for which $v(z_0)$ is arbitrarily close to $y_0 \in a \setminus v(\dot{D}^2 \setminus U)$. It now follows that this choice of $z_0$ may be supposed to satisfy $v^{-1}(v(z_0))=\{z_0\}$ as well, and hence $z_0 \in \dot{D}^2 \setminus \partial\dot{D}^2$ is the sought point.

It remains show to the existence of a point $y_0 \in a \setminus v(\dot{D}^2 \setminus U)$. To that end, let $z' \in \dot{D}^2$ be a limit point of $v^{-1}(a) \subset \dot{D}^2$. Since $\RR \times a$ is pseudo-holomorphic for any cylindrical almost complex structure, an application of the similarity principle in \cite[Lemma 4.2]{ExistenceInjective} shows the following. The limit point must satisfy $z' \in Q \cup \partial\dot{D}^2$. If not, $u$ would map a non-empty open set into $\RR \times a$, thus contradicting the fact that
\[Q \setminus \partial\dot{D}^2 \subset \dot{D}^2\setminus \partial\dot{D}^2\]
is a discrete subset.

Consequently, the points in $\dot{D}^2\setminus \partial \dot{D}^2$ that are mapped to $a$ by $v$ form a discrete subset of $\dot{D}^2\setminus \partial \dot{D}^2$. Use $\mathring{a}$ to denote the Reeb chord $a$ with both end-points removed. Since the boundary condition of $u$ implies that
\[v(\partial\dot{D}^2) \cap a \subset a \setminus \mathring{a},\]
the existence of a point $y_0 \in \mathring{a} \setminus v(\dot{D}^2 \setminus U)$ now follows.
\end{proof}

Another approach to attain a regular cylindrical almost complex structure is to generalise the method used in \cite[Lemma 4.5]{ContHomP} to the symplectisation, using the techniques in \cite{Lifting}. The idea is to consider almost complex structures that are integrable in some neighbourhood of the Reeb chords, for which $L$ moreover satisfies a real-analyticity condition.

Pseudo-holomorphic buildings consisting of transversely cut-out levels can be glued to form transversely cut-out solution \cite[Proposition 4.6]{ContHomP}. To that end, the following properties of a glued pseudo-holomorphic buildings is elementary, but important.
\begin{lem}
\label{lem:energybuilding}
Let $u$ be a pseudo-holomorphic map obtained by gluing the levels in a pseudo-holomorphic building. It follows that
\begin{enumerate}
\item The expected dimension of $u$ is equal to the sum of the expected dimensions of the solutions in each level.
\item $E_{d(\varphi_I\lambda)}(u)$ is the sum of the $E_{d(\varphi_I\lambda)}$-energies of the solutions in level 0 and the $E_{d\lambda}$-energies of the solutions in the other levels.
\end{enumerate}
\end{lem}

Together with Gromov-Hofer compactness it thus follows that, for a regular almost complex structure, the moduli spaces in Section \ref{sec:bgmoduli} can be compactified to smooth manifolds with boundary (with corners), such that the boundary points are in bijection with pseudo-holomorphic buildings of the appropriate type.

\section{Legendrian ambient surgery}
\label{sec:defin-an-ambi}
Let $(Y,\lambda)$ be a contact manifold of dimension $2n+1$ with contact distribution $\xi:=\ker \lambda$, and let $L \subset Y$ be a Legendrian submanifold, which thus satisfies $\dim L=n$.

Here we describe the following construction. Suppose that we are given a Legendrian embedding $L \subset Y$ containing a framed embedded sphere $S \subset L$, together with a so-called isotropic surgery disc $D_S \subset Y$ compatible with the frame of the normal bundle of $S$ (see Definition \ref{def:surgerydisc}). Using $L_S$ to denote the manifold obtained from $L$ by surgery on the framed sphere $S$, the above data will be used to construct a Legendrian embedding $L_S \subset Y$ contained in an arbitrarily small neighbourhood of $L \cup D_S$. We say that $L_S$ is obtained from $L$ by a Legendrian ambient surgery on $S$ (see Definition \ref{def:main}).

Finally, the construction also produces an exact Lagrangian cobordism $V_S \subset \RR \times Y$ from $L$ to $L_S$ which is diffeomorphic to the elementary cobordism induced by the surgery.

\subsection{The isotropic surgery disc}
\label{sec:data}

For a linear subspace $A \subset (W,\omega)$ of a symplectic vectorspace, recall the definition of the \emph{symplectic complement of $A$}, which is the subspace
\[A^\omega := \{ w \in W; \ \omega(A,w)=\{0\} \}.\]

An isotropic submanifold $D \subset (Y,d\lambda)$ has an associated \emph{symplectic normal bundle} identified with $(TD)^{d\lambda}/TD \subset \xi$. In case when $\dim{D}=k+1$, this is naturally a symplectic $2(n-k-1)$-dimensional bundle over $D$ whose symplectic form is given by the restriction of $d\lambda$. We will say that a $(n-k-1)$-dimensional subframe of this vector-bundle is \emph{Lagrangian} if it spans Lagrangian subbundle. The following result is standard.
\begin{lem}
\label{lem:lagrangianframe} A symplectic trivialisation of an $2l$-dimensional symplectic bundle, i.e.~a trivialisation which identifies each fibre with the standard symplectic vectorspace 
\[\left(\RR^l \oplus i\RR^l = \CC^l,\omega_0=\sum_{i=1}^l dx_i \wedge dy_i \right),\]
 induces a Lagrangian frame by taking the first $l$-vectors of the trivialising frame (i.e.~the subframe that spans the real-part of $\CC^l$). Conversely, any Lagrangian frame can be extended to such a symplectic trivialisation.
\end{lem}

The Legendrian ambient surgery will depend on the following data. 
\begin{defn}
\label{def:surgerydisc}
Let $S \subset L$ be an embedded $k$-sphere with a choice of frame $F$ of its normal-bundle $NS \subset TL|_S$. An \emph{isotropic surgery disc compatible with the framed sphere $S \subset L$} is an embedded isotropic closed $(k+1)$-disc $D_S \subset Y$, together with the choice of a Lagrangian frame of its symplectic normal bundle, satisfying
\begin{enumerate}
\renewcommand{\labelenumi}{(\alph{enumi})}

\item $\partial D_S = S$ and $\OP{int}D_S \subset Y \setminus L$.
\item Any outward normal vectorfield to $D_S$ is nowhere tangent to $L$, or equivalently, $(TD_S)^{d\lambda}|_S \cap NS$ is $(n-k-1)$-dimensional.

\item Let $H$ denote a vectorfield in $NS$ satisfying $d\lambda(G,H)>0$ for any outward normal $G$ to $D_S$ (these vector fields form a convex and non-empty set). We require that the frame obtained by adjoining $H$ to the Lagrangian frame of $((TD_S)^{d\lambda}/TD_S)|_S$ is a frame of $NS$ which is homotopic to $F$.
\end{enumerate}
\end{defn}

In particular, the last condition implies that the Lagrangian frame of the symplectic normal bundle $(TD_S)^{d\lambda}/TD_S$ of $D_S$ is required to be tangent to $L$ along $\partial D_S=S$.  Moreover, assuming that $D_S$ satisfies (a) and (b), the following can be said about condition (c).
\begin{rmk} 
\label{rem:1disc} 
\begin{enumerate}
\item In the case when $k=n-1$ the disc $D_S$ is Lagrangian and hence its symplectic normal bundle is zero-dimensional. The last condition is thus equivalent to the requirement that $H=e^{f}\mathbf{n}$, where $\{\mathbf{n}\}$ denotes the chosen frame of the one-dimensional normal bundle $NS$ and where $f \co S \to \RR$.
\item In the case when $k=0$ and $n>1$, there is always a Lagrangian frame that makes $D_S$ into an isotropic surgery disc compatible with the framed sphere $S$. To that end, observe that $U(n-1)$ is connected. Furthermore, up to homotopy relative the boundary, the choice of such a Lagrangian frame lives in $\pi_1 U(n-1) \simeq \ZZ$.
\item In the case when $k=1$ and $n > 3$, the fact that $\pi_1 SO(n-1) \simeq \ZZ_2$ implies that there are exactly two oriented frames of $NS$ up to homotopy, say $F_0$ and $F_1$. Suppose there exists an isotropic surgery disc $D_S$ compatible with the frame $F_0$ of $NS$. In this case, there exists a different Lagrangian frame of the symplectic normal bundle of $D_S$ which is compatible with the frame $F_1$ of $NS$. To see this, observe that the inclusion $SO(n-2) \subset U(n-2)$ induces the trivial map on $\pi_i$, $i \le 1$, and that the inclusion $SO(n-2) \subset SO(n-1)$ is 1-connected.
\end{enumerate}
\end{rmk}

The case when $k<n-1$ will be called the \emph{subcritical case}. Observe that in this case the isotropic surgery $(k+1)$-disc $D_S$ is subcritical. As follows by work of Gromov and Lees, see for example \cite[Section 14.1]{Hprinciple}, there is an h-principle for subcritical isotropic embeddings. In particular, the existence and deformations of such submanifolds can be formulated in purely homotopy-theoretic terms.

\begin{rmk}
\label{rem:hprinciple}
 It follows that the existence of a subcritical surgery disc, and its deformations up to isotopies fixing $L$, are governed by an h-principle.
\end{rmk}

\subsection{The standard model of a Lagrangian handle}
\label{sec:standard-model}
Recall that the jet-space $(J^1M,\lambda_0)$, $\lambda_0:=dz+\theta_M$, has the natural projections
\begin{gather*}\Pi_{\OP{F}} \co J^1M \to M \times \RR,\\
\Pi_{\OP{Lag}}\co J^1M \to T^*M,
\end{gather*}
called the \emph{front projection} and the \emph{Lagrangian projection}, respectively. A Legendrian submanifold of $J^1M$ is determined by its image under $\Pi_{\OP{F}}$ and, up to a translation of the $z$-coordinate, it is determined by its image under $\Pi_{\OP{Lag}}$ as well.

The image $\Pi_{\OP{Lag}}(L) \subset (T^*M,d\theta)$ of a Legendrian submanifold $L \subset J^1M$ is an exact immersed Lagrangian submanifold, and the Reeb chords on $L$ are in bijective correspondence with the self-intersections of $\Pi_{\OP{Lag}}(L)$. In the front projection, Reeb chords correspond to two sheets of $\Pi_{\OP{F}}(L)$ having a common tangency above a given point in the base $M$.

We will use the above projections to construct a Legendrian standard model $L_{0,k} \subset J^1\RR^n$ of a neighbourhood of a framed $k$-sphere $S_{0,k} \subset L_{0,k}$, together with a Legendrian submanifold $L_{\epsilon,k} \subset J^1\RR^n$ that is obtained from $L_{0,k}$ by surgery on $S_{0,k}$. This will be the standard model of a Legendrian ambient surgery.

The construction will also provide an exact Lagrangian cobordism $W_{\epsilon,k} \subset \RR \times J^1\RR^n$ from $L_{0,k}$ to $L_{\epsilon,k}$ which is diffeomorphic to the manifold obtained by a $(k+1)$-handle attachment on $(-\infty,-1] \times L_{0,k}$  along
\[S_{0,k} \subset L_{0,k}=\partial((-\infty,-1] \times L_{0,k}).\]

\subsubsection{Identifying the symplectisation with a cotangent bundle}
Let $(\mathbf{q},\mathbf{p},z)$ be canonical coordinates on $\left(J^1\RR^n,dz-\sum_{i=1}^{n}p_idq_i\right)$, and recall that $t$ is the coordinate of the $\RR$-factor of the symplectisation $\RR \times J^1(\RR^n)$. Also, we endow $T^*(\RR^n \times \RR_{>0})$ with the canonical coordinates
\[((\mathbf{x},x_{n+1}),(\mathbf{y},y_{n+1}))=(((x_1,\hdots,x_n),x_{n+1}),((y_1,\hdots,y_n),y_{n+1})),\]
where $(\mathbf{x},x_{n+1})$ form standard coordinates on the base $\RR^n \times \RR_{>0}$ and where $(\mathbf{y},y_{n+1})$ form coordinates induced by the coframe $\{dx_i\}$. The symplectomorphism
\begin{gather*}
(T^*(\RR^n \times \RR_{>0}),d\theta_{\RR^n \times \RR_{>0}}) \to (\RR \times J^1\RR^n,d(e^t\lambda_0)), \\
((\mathbf{x},x_{n+1}),(\mathbf{y},y_{n+1})) \mapsto (\log {x_{n+1}},(\mathbf{x},\mathbf{y}/x_{n+1},y_{n+1})),
\end{gather*}
is exact, given that we take the primitive of the symplectic form $d\theta_{\RR^n \times \RR_{>0}}$ on $T^*(\RR^n \times \RR_{>0})$ to be
\[ -y_1dx_1-\cdots-y_ndx_n+x_{n+1}dy_{n+1}=\theta_{\RR^n \times \RR_{>0}}+d(x_{n+1}y_{n+1}).\]

Since $-y_1dx_1-\cdots-y_ndx_n+x_{n+1}dy_{n+1}$ and $\theta_{\RR^n \times \RR_{>0}}$ differ by an exact one-form, it follows that the above map also identifies Lagrangian submanifolds of $T^*(\RR^n \times \RR_{>0})$ which are exact with respect to the primitive $\theta_{\RR^n \times \RR_{>0}}$ with Lagrangian submanifolds of the symplectisation $\RR \times J^1\RR^n$ which are exact with respect to the primitive $e^t \lambda_0$. In other words, an exact Lagrangian submanifold of the symplectisation $\RR \times J^1\RR^n$ has a Legendrian lift to $J^1(\RR^n\times \RR_{>0})$, and can thus be represented by the corresponding image in $(\RR^n \times \RR_{>0}) \times \RR$ under the front projection.

Assume that a part of the front projection is given as the graph of a smooth function $f(\mathbf{x},x_{n+1})$ above some region in $\RR^n\times \RR_{>0}$. It thus follows that the corresponding sheet of the Lagrangian submanifold is determined by
\[ \{y_i=\partial_{x_i}f(\mathbf{x},x_{n+1})\} \subset T^*(\RR^n\times\RR_{>0}),\]
on which the pull-back of $e^t \lambda_0$ has a primitive given by
\[-f(\mathbf{x},x_{n+1})+x_{n+1}y_{n+1}+C, \quad C \in \RR.\]
Finally, under above the identification of $T^*(\RR^n\times\RR_{>0})$ with $\RR \times J^1\RR^n$, this sheet corresponds to a subset of a cylinder over a Legendrian submanifold in $J^1\RR^n$ if and only if the above function is of the form
\[f(\mathbf{x},x_{n+1})=g(\mathbf{x})x_{n+1}+C, \quad C \in \RR.\]

\subsubsection{The model for an exact Lagrangian handle $W_{\epsilon,k}$}
\label{sec:model}
Here we construct the standard model $W_{\epsilon,k}$ of an elementary Lagrangian cobordism, where $k \in \{0,\hdots,n-1\}$.

\begin{figure}[htp]
\centering
\labellist
\pinlabel $1-\epsilon^{1/3}$ at 130 73
\pinlabel $1$ at 114 88
\pinlabel $x_{n+1}$ at 105 143
\pinlabel $1$ at 155 3
\pinlabel $-1$ at 52 3
\pinlabel $x_1$ at 214 13
\pinlabel $\color{red}C_S$ at 145 50
\pinlabel $\Pi_{\OP{F}}(W_{\epsilon,k})$ at 155 133
\endlabellist
\includegraphics{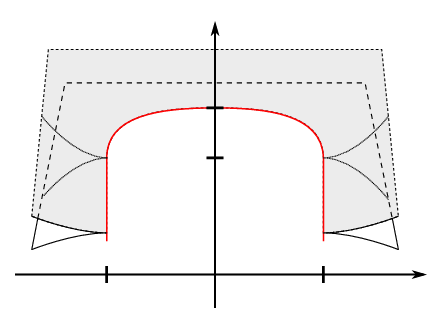}
\caption{A part of the front projection of the standard model $W_{\epsilon,k}$ of an elementary Lagrangian cobordism of index $k+1$ in the case when $k=0$ and $n=1$. The cusp-edge corresponds to the core disc $C_S$.}
\label{fig:cob}
\end{figure}

For each $0<\epsilon<1$, we let $\rho_\epsilon\co \RR \to [0,1]$ be a smooth symmetric function satisfying
\begin{itemize}
\item $\rho_\epsilon(0)=1$,
\item $\rho_\epsilon'(x) \ge 0$ for $x \ge 0$,
\item $\rho_\epsilon(x^2)=0$ for $x^2 \geq 1+(2/3)\epsilon$, and
\item $\frac{d^2}{dx^2}\left(x^2+\rho_\epsilon\left(x^2\right)(1+\epsilon/2)\right)>0$ for all $x \in \RR$.
\end{itemize}
In particular, for each $0 \le C \le 1+\epsilon/2$, the function $x^2+C\rho_\epsilon(x^2)$ is strictly convex, coincides with $x^2$ for $x^2 \geq 1+(2/3)\epsilon$, and takes the value $C$ at $x=0$.

For each $0 <\epsilon<1$ we also consider a smooth non-decreasing function $\sigma_\epsilon\co\RR_{>0} \to \RR_{\ge0}$ satisfying
\begin{itemize}
\item $0 \leq \sigma_\epsilon'(x) \leq (1+\epsilon)\epsilon^{-1/3}$,
\item $\sigma_\epsilon(x)=0$ for $x \leq 1-\epsilon^{1/3}$,
\item $\sigma_\epsilon(x)=1+\epsilon/2$ for $x \geq 1+\epsilon^{1/3}$, and
\item $\sigma_\epsilon(1)=1$, $\sigma_\epsilon'(1)>0$, $\sigma_\epsilon''(1) \ge 0$.
\end{itemize}

Finally, we define
\begin{gather*}
\varphi_\epsilon\co \RR^n \times \RR \to \RR,\\
\varphi_\epsilon(\mathbf{x},x_{n+1})=\rho_\epsilon\left(x_1^2 + \cdots + x_{k+1}^2\right)\sigma_\epsilon(x_{n+1}).
\end{gather*}
We now consider the front in $(\RR^n \times \RR_{>0}) \times \RR$ that above the domain
\[\left\{ \left(x_1^2+\cdots+x_{k+1}^2\right)-\left(x_{k+2}^2+\cdots+x_{n}^2\right)+\varphi_\epsilon(\mathbf{x},x_{n+1}) \geq  1
\right\} \subset \RR^n \times \RR_{>0},\]
is given by the graphs of the two functions $\pm F_\epsilon(\mathbf{x})x_{n+1}$, where
\begin{align*}
 F_\epsilon(\mathbf{x},x_{n+1})  :=  \big(\! &\left(x_1^2+\cdots+x_{k+1}^2\right)\\
 &-\left(x_{k+2}^2+\cdots+x_{n}^2\right)+\varphi_\epsilon(\mathbf{x},x_{n+1})-1\big)^{3/2},
\end{align*}
and which has a cusp-edge along the boundary of the same domain. We use $W_{\epsilon,k} \subset T^*(\RR^n \times \RR_{>0})$ to denote the corresponding (non-compact!) exact Lagrangian submanifold.

Observe that $W_{\epsilon,k}$ can be identified with an exact Lagrangian submanifold of $\RR \times J^1\RR^n$ which is cylindrical over a Legendrian submanifold above the complement of
\[ \left\{ \begin{array}{l}
x_1^2+\cdots+x_{k+1}^2 \leq 1+\epsilon,\\
 1-\epsilon^{1/3} \leq x_{n+1} \leq 1+\epsilon^{1/3} \end{array}\right\}
\subset \RR^n \times \RR_{>0}.
\]
Consequently, the primitive of the pull-back of the one-form
\[-y_1dx_1-\cdots-y_ndx_n+x_{n+1}dy_{n+1}\]
may be taken to vanish in the same set.

We also consider the exact Lagrangian submanifold $W_{0,k}$ corresponding to the following front in $(\RR^n\times \RR_{>0}) \times \RR$. Above the domain
\[\left\{ \left(x_1^2+\cdots+x_{k+1}^2\right)-\left(x_{k+2}^2+\cdots+x_{n}^2\right) \geq 1\right\} \subset \RR^n\times \RR_{>0},\]
we require it to correspond to the graphs of the two functions
\[ \pm \left(\left(x_1^2+\cdots+x_{k+1}^2\right)-\left(x_{k+2}^2+\cdots+x_{n}^2\right)-1\right)^{3/2}x_{n+1},\]
while it has a cusp-edge along the boundary of the same domain. Obviously, $W_{0,k}$ is cylindrical over a Legendrian submanifold, and the primitive of the pull-back of the one-form
\[-y_1dx_1-\cdots-y_ndx_n+x_{n+1}dy_{n+1}\]
may be taken to vanish. We use $L_{0,k}$ to denote this Legendrian submanifold. It follows that $W_{\epsilon,k}$ is an exact Lagrangian cobordism from $L_{0,k} \subset J^1\RR^n$ to a Legendrian submanifold that will be denoted by $L_{\epsilon,k} \subset J^1\RR^n$.

Observe that $L_{\epsilon,k}$ is diffeomorphic to the manifold obtained from $L_{0,k}$ by a surgery on the $k$-sphere
\[S_{0,k}:=\left\{\begin{array}{l} q_1^2 + \cdots + q_{k+1}^2 = 1, \\
q_{k+2}=\cdots =q_{n}=0,\\
\mathbf{p}=0,\\
z=0
\end{array}
\right\} \subset L_{0,k} \subset J^1\RR^n\]
with the choice of frame
\[N_{0,k}:=\langle q_1\partial_{p_1} + \cdots + q_{k+1}\partial_{p_{k+1}},\partial_{q_{k+2}},\hdots,\partial_{q_n} \rangle \]
of its normal bundle. The cobordism $W_{\epsilon,k}$ is diffeomorphic to the manifold obtained by a handle-attachment on $(-\infty,-1] \times L_{0,k}$ along
\[S_{0,k} \subset L_{0,k}=\partial((-\infty,-1] \times L_{0,k}).\]

By applying Formula \ref{eq:maslov}, one sees that the Maslov class vanishes for $W_{\epsilon,k}$, and hence the same is true for both $L_{0,k}$ and $L_{\epsilon,k}$.

\begin{figure}[htp]
\centering
\labellist
\pinlabel $z$ at 100 60
\pinlabel $1$ at 150 13
\pinlabel $-1$ at 53 13
\pinlabel $\color{red}S_{0,k}$ at 145 32
\pinlabel $\color{red}S_{0,k}$ at 53 32
\pinlabel $\Pi_{\OP{F}}(L_{0,k})$ at 175 53
\pinlabel $\color{blue}D_{0,k}$ at 115 32
\pinlabel $q_1$ at 212 22
\endlabellist
\includegraphics{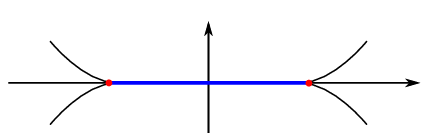}
\caption{The front projection of the standard model $L_{0,k}$ before the surgery in the case when $k=0$ and $n=1$.}
\label{fig:before}
\end{figure}

\begin{figure}[htp]
\centering
\labellist
\pinlabel $z$ at 100 60
\pinlabel $\color{red}c_0$ at 111 36
\pinlabel $-1-\epsilon/2$ at 4 10
\pinlabel $1+\epsilon/2$ at 192  10
\pinlabel $\Pi_{\OP{F}}(L_{\epsilon,k})$ at 175 53
\pinlabel $q_1$ at 212 22
\endlabellist
\includegraphics{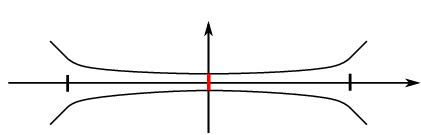}
\caption{The front projection of the standard model $L_{\epsilon,k}$ after the surgery in the case when $k=0$ and $n=1$.}
\label{fig:after}
\end{figure}

\subsubsection{The Reeb chords on $L_{0,k}$ and $L_{\epsilon,k}$}
\label{sec:newreeb}
Note that $L_{0,k}$ has no Reeb-chords, and that $L_{\epsilon,k}$ has exactly one Reeb chord $c_0$ above $\mathbf{q}=0$. One computes
\[ \ell(c_0)=\epsilon^{3/2}/\sqrt{2}.\]
Furthermore, this Reeb chord satisfies
\[|c_0|=n-k-1,\]
which can be seen by applying Formula \ref{eq:cz}. To that end, we observe that the front projection $\Pi_{\OP{F}}(L_{\epsilon,k})$ consists of two sheets being the graphs of the functions $f_s<0<f_e$, and that the critical point of $f_s-f_e$ has Morse index $n-(k+1)$ (see Figure \ref{fig:after}). Furthermore, in the case $n>1$, the obvious capping path of $c_0$ traverses exactly one cusp-edge in downward direction.

\subsubsection{The standard model of the isotropic surgery disc}
\label{sec:stdmodel}
Consider the embedded isotropic disc
\[D_{0,k}:=\left\{\begin{array}{l}q_1^2 + \cdots + q_{k+1}^2 \leq 1, \\
q_{k+2}=\cdots =q_{n}=0,\\
\mathbf{p}=0,\\
z=0
\end{array}
\right\} \subset J^1\RR^n\]

with boundary $\partial D_{0,k}=S_{0,k}$. If we endow the symplectic normal bundle $(TD_{0,k})^{d\lambda_0}/TD_{0,k}$ with the Lagrangian frame
\[\{ \partial_{q_{k+2}},\hdots,\partial_{q_n} \},\]
it is easily seen that $D_{0,k}$ becomes an isotropic surgery disc compatible with the above frame of the normal bundle of $S_{0,k} \subset L_{0,k}$.

\subsection{Controlling the size of the standard handle}
\label{sec:nbhd}

This subsection deals with controlling the ``size'' of the non-cylindrical part of the standard model of the handle $W_{\epsilon,k} \subset T^*(\RR^n \times \RR_{>0})$, which obviously depends on the parameter $\epsilon>0$. First, this will be necessary when defining the Legendrian ambient surgery construction, which is performed by importing this standard model into the given contact manifold. In later sections this will also turn out to be important when providing estimates the of symplectic areas of pseudo-holomorphic discs having boundary on the handle.

Consider the neighbourhood
\[
\widetilde{U}_\epsilon:= 
\left\{ \begin{array}{l}
x_1^2 + \cdots + x_{k+1}^2 < 1+\epsilon,\\
x_{k+2}^2 + \cdots + x_{n}^2 < 2\epsilon, \\
|y_i|< 3\sqrt{\epsilon(1+\epsilon)}x_{n+1},\quad i \leq n, \\
|y_{n+1}|< 10\epsilon^{1/6}
\end{array}
\right\}
\subset T^*(\RR^n \times \RR_{>0})
\]
which is invariant under translation of the $t$-coordinate of $\RR \times J^1\RR^n$ (using the above identification). The following lemma in particular shows that this neighbourhood contains the non-cylindrical part of the handle $W_{\epsilon,k}$.
\begin{lem}
\label{lem:nbhd}
For each $0<\epsilon<1$, $W_{\epsilon,k}$ and $W_{0,k}$ coincide outside of the open set $\widetilde{U}_{\epsilon'}$ given that $\epsilon'\geq (2/3)\epsilon$.
\end{lem}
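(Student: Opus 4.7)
The plan is to localize where $W_\epsilon$ and $W_0$ can actually differ, and then verify that any point of either Lagrangian projecting to this region satisfies the defining strict inequalities of $U_{\epsilon'}$. Since
\[ F_\epsilon(\mathbf{x})^{2/3} - F_0(\mathbf{x})^{2/3} = \varphi_\epsilon(\mathbf{x}) = \rho_\epsilon(x_1^2+\hdots+x_{k+1}^2)\,\sigma_\epsilon(x_{n+1}), \]
the two front functions, together with all their derivatives, coincide outside the support of $\varphi_\epsilon$; assuming standard smooth bump functions $\rho_\epsilon$ and $\sigma_\epsilon$ all of whose derivatives vanish on the boundary of their support, $W_\epsilon$ and $W_0$ agree above the complement of the closed set
\[ B := \{x_1^2+\hdots+x_{k+1}^2 \leq \eta+(2/3)\epsilon\} \cap \{x_{n+1} \geq 1-\epsilon^{1/3}\}, \]
which contains the support of $\varphi_\epsilon$. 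It therefore suffices to show that every point $(\mathbf{x},\mathbf{y})$ of $W_\epsilon$, and similarly of $W_0$, with $\mathbf{x} \in B$ lies in $U_{\epsilon'}$ for every $\epsilon' \geq (2/3)\epsilon$.

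Writing $u = x_1^2+\hdots+x_{k+1}^2$ and $v = x_{n+1}$, the bound $u \leq \eta+(2/3)\epsilon < \eta+\epsilon'$ is immediate from $\mathbf{x}\in B$. The crucial estimate concerns the transverse middle coordinates: from the defining relation $u - \sum_{j>k+1} x_j^2 + \varphi_\epsilon - \eta = F_\epsilon(\mathbf{x})^{2/3} \geq 0$ one obtains
\[ \sum_{j=k+2}^n x_j^2 \;\leq\; u + \varphi_\epsilon - \eta \;\leq\; u + \rho_\epsilon(u)(\eta+\epsilon/2) - \eta, \]
using $\sigma_\epsilon \leq \eta+\epsilon/2$. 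The strict convexity condition imposed on $\rho_\epsilon$ says precisely that $G(x) := x^2 + \rho_\epsilon(x^2)(\eta+\epsilon/2)$ is strictly convex, and since $G(0) = \eta+\epsilon/2$ while $G(x) = x^2$ for $x^2 \geq \eta+(2/3)\epsilon$, convexity forces $G(x) \leq \eta+(2/3)\epsilon$ on the interval $0 \leq x^2 \leq \eta+(2/3)\epsilon$. Hence $\sum_{j>k+1} x_j^2 \leq (2/3)\epsilon < 2\epsilon'$, and as a byproduct $F_\epsilon^{2/3} \leq (2/3)\epsilon$ as well.

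The cotangent-coordinate bounds then follow by differentiating $f(\mathbf{x}) = \pm F_\epsilon(\mathbf{x})x_{n+1}$. For $i \leq k+1$ one has $\partial_{x_i}F_\epsilon = 3F_\epsilon^{1/3}x_i(1+\rho_\epsilon'(u)\sigma_\epsilon(v))$, and the factor $1+\rho_\epsilon'\sigma_\epsilon$ lies in $[0,1]$: it is at most $1$ since $\rho_\epsilon'\leq 0$ and $\sigma_\epsilon\geq 0$, and at least $0$ since the convexity of $G$ forces $G'(x)\geq 0$ for $x \geq 0$. Combining with $F_\epsilon^{1/3}\leq \sqrt{(2/3)\epsilon}$ and $x_i^2 \leq \eta+(2/3)\epsilon$ gives $|y_i| \leq 3x_{n+1}\sqrt{(2/3)\epsilon(\eta+(2/3)\epsilon)} \leq 3x_{n+1}\sqrt{\epsilon'(\eta+\epsilon')}$. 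For $k+2 \leq i \leq n$ one uses $x_i^2 \leq (2/3)\epsilon$ instead to obtain a strictly sharper bound once $\epsilon < 3\eta/2$. Finally, in $y_{n+1} = F_\epsilon + (3/2)x_{n+1}F_\epsilon^{1/3}\rho_\epsilon(u)\sigma_\epsilon'(v)$, the crucial point is that $\sigma_\epsilon'(v)\neq 0$ forces $v \leq 1+\epsilon^{1/3}$, so combining with $|\sigma_\epsilon'|\leq (\eta+\epsilon)\epsilon^{-1/3}$ yields $|y_{n+1}|$ of order $\epsilon^{1/6}(\eta+\epsilon)$, which is strictly less than $3(\epsilon')^{1/6}(\eta+\epsilon')$ once $\epsilon$ is sufficiently small compared to $\eta$.

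The main obstacle, and the reason for the rather peculiar convexity condition in the definition of $\rho_\epsilon$, is the middle-coordinate estimate. A naive bound on $u + \varphi_\epsilon - \eta$ would yield a quantity of order $\eta$ rather than $\epsilon$, which would make the conclusion false. The strict convexity of $G$ is precisely what forces the $\eta$-contribution to cancel, leaving a remainder of order $\epsilon$. All remaining inequalities are then routine tracking of powers of $\epsilon$, and the hypothesis that $\epsilon > 0$ is small enough absorbs the various constants.
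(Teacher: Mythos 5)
Your proof is correct and follows essentially the same route as the paper's: both localize the discrepancy to the support of $\varphi_\epsilon$, use the convexity hypothesis on $x^2+\rho_\epsilon(x^2)(\eta+\epsilon/2)$ to force the transverse coordinates and $F_\epsilon^{2/3}$ down to order $\epsilon$, and then differentiate the front functions to bound the $y$-coordinates. The only (cosmetic) differences are that you invoke convexity of $G=f_{\eta+\epsilon/2}$ together with monotonicity in the $y$-variable where the paper interpolates to get convexity of every $f_y$, and that your non-strict bound $u\le\eta+(2/3)\epsilon$ on the closed set $B$ does not literally give the strict inequalities defining $U_{\epsilon'}$ when $\epsilon'=(2/3)\epsilon$ --- this is harmless because $W_\epsilon$ and $W_0$ already agree (to all orders) where $u=\eta+(2/3)\epsilon$, so only the open region matters, but it is worth stating.
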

\begin{proof}
We show that $W_{\epsilon,k}$ and $W_{0,k}$ coincide outside of the set $\widetilde{U}_{(2/3)\epsilon}$, from which the general statement obviously follows. We thus fix $0<\epsilon<1$ and set $\epsilon':=(2/3)\epsilon$.

Consider the function
\[f_y(x):=x^2+y\rho_\epsilon(x^2),\]
where $\rho_\epsilon$ is as defined in Section \ref{sec:standard-model}. Recall that $f_y(x)$ is symmetric in $x$, satisfies $f_y(x)=x^2$ for $x^2 \geq 1+\epsilon'$, and that $f_y''(x)>0$ holds for each $0 \leq y \leq 1+\epsilon/2$. From this we conclude that the inequality
\[y=f_y(0) \leq f_y(x) < 1+\epsilon'\]
holds for all $x^2 < 1+\epsilon'$, $0 \leq y \leq \eta+\epsilon/2$ or, in other words, that
\[0\le y = f_y(0) \leq x^2+y\rho_\epsilon\left(x^2\right)-1 < \epsilon'\]
holds in the same set.

This inequality implies that we have the inclusion
\begin{equation}
\label{eq:inclusion}
W_{\epsilon,k} \cap \left\{x_1^2 + \cdots + x_{k+1}^2 < 1+\epsilon' \right\} \subset \left\{x_{k+2}^2 + \cdots + x_{n}^2 < \epsilon' \right\}\end{equation}
and hence that the inequality
\begin{equation}
\label{eq:Fbound}
F_\epsilon(\mathbf{x},x_{n+1}) < (\epsilon')^{3/2}
\end{equation}
holds on the same set.

Furthermore, since $\rho_\epsilon(x^2)$ vanishes for $x^2 \geq 1+\epsilon'$, it follows that $W_{\epsilon,k}$ and $W_{0,k}$ coincide outside of the set
\[\left\{x_1^2 + \cdots + x_{k+1}^2  < 1+\epsilon' \right\},\]
and by (\ref{eq:inclusion}), outside of
\[ O := \left\{ \begin{array}{l} 
x_1^2 + \cdots + x_{k+1}^2 < 1+\epsilon', \\
x_{k+2}^2 + \cdots + x_{n}^2 < \epsilon'
\end{array}
\right\}.\]

Since $f_y(x)$ is symmetric in $x$ and has increasing derivative by assumption, and since $f_y'(\pm\sqrt{1+\epsilon'})=\pm2\sqrt{1+\epsilon'}$, the inequality
\begin{equation}
\label{eq:fprimbound}
|f_y'(x)| < 2\sqrt{1+\epsilon'}
\end{equation}
holds for all $ x^2 < 1+\epsilon'$.

Using (\ref{eq:Fbound}) and (\ref{eq:fprimbound}) one computes that
\begin{equation}\left|\frac{\partial}{\partial x_i}F_\epsilon(\mathbf{x},x_{n+1})\right|\leq (3/2)F_\epsilon(\mathbf{x},x_{n+1})^{1-2/3}2\sqrt{1+\epsilon'} < 3\sqrt{\epsilon'}\sqrt{(1+\epsilon')},\end{equation}
holds inside $O$ for each $i=1, \hdots, k+1$, and that
\begin{equation}\label{eq:boundp} \left|\frac{\partial}{\partial x_i}F_\epsilon(\mathbf{x},x_{n+1})\right|\leq (3/2)F_\epsilon(\mathbf{x},x_{n+1})^{1-2/3}2\sqrt{\epsilon'} < 3\sqrt{\epsilon'}\sqrt{(1+\epsilon')}\end{equation}
holds inside $O$ for each $i=k+2, \hdots, n$. In conclusion, $W_{\epsilon,k} \cap O$ is contained in the set
\[\left\{|y_i| < 3\sqrt{\epsilon'(1+\epsilon')}x_{n+1},\ 1 \leq i\leq n \right\}.\]

Finally, from the inequality
\[0 \leq \sigma_\epsilon'(x_{n+1})\leq (1+\epsilon)\epsilon^{-1/3},\]
together with the fact that $\sigma_\epsilon'(x_{n+1})=0$ holds for $x_{n+1} \ge 1+\epsilon^{1/3}$, we get the bound

\begin{align*}
 \left|\frac{\partial}{\partial x_{n+1}}(F_\epsilon(\mathbf{x},x_{n+1})x_{n+1})\right| & \leq  F_\epsilon(\mathbf{x},x_{n+1})+\frac{3}{2}F_\epsilon(\mathbf{x},x_{n+1})^{1-2/3}\sigma'_\epsilon(x_{n+1})x_{n+1} \\
& <  (\epsilon')^{3/2}+\frac{3}{2}(\epsilon')^{1/2}((1+\epsilon)\epsilon^{-1/3})(1+\epsilon^{1/3})\\
& <  (\epsilon')^{1/6}+6(\epsilon')^{1/2}\epsilon^{-1/3}\\
& =  (\epsilon')^{1/6}+6(\epsilon')^{1/2}((3/2)\epsilon')^{-1/3}\\
& <  (\epsilon')^{1/6}+ 6(\epsilon')^{1/6}
\end{align*}
on $O$, where we again have used the inequality (\ref{eq:Fbound}). In other words, $W_{\epsilon,k} \cap O$ is contained inside
\[ \left\{ |y_{n+1}| < 7(\epsilon')^{1/6} \right\}.\]
In conclusion, we have shown that $W_{\epsilon,k}$ and $W_{0,k}$ coincide outside of the set~$\widetilde{U}_{\epsilon'}$.
\end{proof}

\subsection{The definition of a Legendrian ambient surgery}
\label{sec:defcobordism}
Suppose that we are given a Legendrian submanifold $L \subset Y$ containing a framed embedded $k$-sphere $S \subset L$, together with a compatible isotropic surgery $(k+1)$-disc $D_S\subset Y$. We use $NS \subset TL|_S$ to denote the normal bundle of $S$.

We will construct an exact Lagrangian cobordism $V_{S,\epsilon} \subset \RR \times Y$ from $L$ to a Legendrian submanifold which will be denoted by $L_S$, and which is diffeomorphic to a manifold obtained from $L$ by $k$-surgery on $S$ with the above frame of $NS$. Furthermore, the cobordism $V_{S,\epsilon}$ will be diffeomorphic to the elementary cobordism of index $k+1$ induced by the surgery. We will say that $L_S$ is obtained from $L$ by a Legendrian ambient surgery on $S$.

\subsubsection{Identifying the isotropic surgery disc with the standard model}
\label{sec:standard-nbhd}
There is a neighbourhood $U \subset J^1\RR^n$ of $D_{0,k}$ for which there is a contact-form preserving diffeomorphism
\[ \phi \co (U,\lambda_0) \to (\phi(U),\lambda) \]
identifying $U$ with a neighbourhood $\phi(U) \subset Y$ of $D_S$. We can moreover require that
\begin{enumerate}

\item $\phi(D_{0,k})=D_S$, and $\phi(S_{0,k})=S$,

\item $D\phi$ maps the Lagrangian frame of $(TD_{0,k})^{d\lambda_0}/TD_{0,k}$ in the standard model (as given in Section \ref{sec:stdmodel}) to the choice of Lagrangian frame of $(TD_S)^{d\lambda}/TD_S$ for the isotropic surgery disc, and
\item $\phi(L_{0,k} \cap U)=L \cap \phi(U)$.
\end{enumerate}
The neighbourhood theorem \cite[Theorem 6.2.2]{IntroContact} for isotropic submanifolds gives neighbourhoods and a map $\phi$ as above satisfying (1) and (2). We may furthermore assume that (3) holds infinitesimally, i.e.~that
\[D\phi(TL_{0,k}|_{S_{0,k}})=TL|_S.\]
After a perturbation of $L$ by a Legendrian isotopy, and after possibly shrinking the above neighbourhoods, we may hence assume that (3) is satisfied as well.

\subsubsection{The constructions}
\label{sec:constructV}
The identification of a neighbourhood of $D_S$ with a neighbourhood of $D_{0,k}$ via the contact-form preserving map $\phi$ constructed above induces an exact symplectomorphism of the form
\[(\Id_\RR,\phi) \co (\RR \times U,d(e^t\lambda_0)) \to (\RR \times \phi(U),d(e^t\lambda))\]
from the neighbourhood
\[\RR \times U \supset \RR \times D_{0,k}\]
in $\RR \times J^1\RR^n$
to the neighbourhood
\[ \RR \times \phi(U) \supset \RR \times D_S \]
in $\RR \times Y$. Observe that, by construction, this symplectomorphism identifies $W_{0,k}$ with $\RR \times L$.

Recall the identification of $\RR \times J^1\RR^n$ with $T^*(\RR^n \times \RR_{>0})$ described in Section \ref{sec:standard-model}. After choosing a small enough $\epsilon>0$, we may assume that the cylindrical neighbourhood
\[\widetilde{U}_\epsilon \subset T^*(\RR^n \times \RR_{>0})\]
of $\RR \times D_{0,k}$ described in Section \ref{sec:nbhd} is identified with a neighbourhood $\RR \times U_\epsilon \subset \RR \times U$ under this identification.

The two exact Lagrangian cobordisms $W_{0,k},W_{\epsilon,k} \!\subset\! \RR \!\times\! J^1\RR^n$ constructed in Section \ref{sec:model} coincide outside of the set $\widetilde{U}_\epsilon$ as follows by Lemma \ref{lem:nbhd}. Replacing the set $(\RR \times L) \cap (\RR \times \phi(U_\epsilon))$ with $(\Id_\RR,\phi)(W_{\epsilon,k} \cap (\RR \times U_\epsilon))$ thus produces an exact Lagrangian cobordism that will be denoted by
\[V_{S,\epsilon} \subset \RR \times Y.\]
This is an exact Lagrangian cobordism from $L \subset Y$ to a Legendrian submanifold that we will denote by
\[L_{S,\epsilon} \subset Y.\]
Observe that $L_{S,\epsilon}$ is diffeomorphic to the manifold obtained by performing a $k$-surgery on $S$ with the specified frame of its normal bundle, and that $V_{S,\epsilon}$ is diffeomorphic to the corresponding elementary cobordism of index $k+1$.

Moreover, we will fix a choice of an embedding of the core disc
\[C_{S,\epsilon} := (\Id_\RR, \phi)(W_{\epsilon,k} \cap \{ \mathbf{p}=0, t \le 0\} ) \subset V_{S,\epsilon}\]
of the handle-attachment. It is easily checked that $C_{S,\epsilon} \subset \RR \times V_{S,\epsilon}$ is a smoothly embedded $(k+1)$-disc that coincides with $(-\infty,-1) \times S$ outside of a compact set (also, see Figure \ref{fig:cob}).

\begin{defn}
\label{def:main}
Given an isotropic surgery $(k+1)$-disc $D_S \subset Y$ compatible with a framed embedded $k$-sphere $S \subset L \subset Y$ inside a Legendrian manifold, we will say that the Legendrian submanifold $L_{S,\epsilon} \subset Y$ constructed above is obtained from $L$ by a \emph{Legendrian ambient surgery on $S$}. Furthermore, the construction provides the exact Lagrangian cobordism $V_{S,\epsilon} \subset \RR \times Y$ from $L$ to $L_{S,\epsilon}$, which will be called an \emph{elementary Lagrangian cobordism of index $k+1$}. 
\end{defn}

The above construction can be seen to satisfy the following properties.
\begin{rmk}
\label{rem:ambientsurgery}
\begin{enumerate}
\item The new Reeb chord on $L_S$ that corresponds to the unique Reeb chord on the local model $L_{0,k}$ will be denoted by $c_S$ and satisfies
\begin{gather*}
|\ell(c_S)|=n-k-1,\\
\ell(c_S)=\epsilon^{3/2}/\sqrt{2},
\end{gather*}
as follows from the description in Section \ref{sec:newreeb}.
\item Changing the parameter $\epsilon>0$ above does not change the Hamiltonian isotopy class of $V_{S,\epsilon}$, nor the Legendrian isotopy class of $L_{S,\epsilon}$. We will sometimes omit $\epsilon$ from the notation.
\item Assume that we are given two embedded spheres $S_i \subset L_i \subset Y$, $i=1,2$, with compatible isotropic surgery discs $D_{S_i}$. If there exists a contact isotopy taking $L_1 \cup D_{S_1}$ to $L_2 \cup D_{S_2}$, whose tangent-map moreover takes the Lagrangian frame of the symplectic normal bundle of $D_{S_1}$ to that of $D_{S_2}$,  it follows that $(L_1)_{S_1}$ is Legendrian isotopic to $(L_2)_{S_2}$.
\item In the case $k<n-1$, the isotropic surgery disc $D_S$ is subcritical. This means that, for generic data, there are no Reeb chords on $L \cup D_S$  starting or ending on $D_S$ (here we use the assumption that the periodic Reeb orbits of $(Y,\lambda)$ are non-degenerate). Given any $E>0$, after shrinking $\epsilon>0$, we may thus assume that there is a natural identification
\[ \{ c \in \mathcal{Q}(L_S); \: \ell(c) \le E \}=\{ c \in \mathcal{Q}(L); \: \ell(c) \le E \} \cup \{ c_S\}\]
of Reeb chords. In Section \ref{sec:contracting} we show that this also can be achieved in the case when $k=n-1$, given that we first deform $L$ by a Legendrian isotopy.
\item It can be seen that the symplectisation-coordinate $t$ restricted to $V_{S,\epsilon}$ is a Morse function with a unique critical point of index $k+1$. Our choice of core disc $C_S$ moreover passes through this critical point.
\end{enumerate}
\end{rmk}

\subsection{The effect of a Legendrian ambient surgery on\\ the Maslov class}
\label{sec:maslov}

The Maslov class of a Lagrangian cobordism pulls back to the Maslov classes of its respective Legendrian ends under the inclusion maps. It follows that
\[ \mu(H_1(L)), \mu(H_1(L_S)) \subset \mu(H_1(V_S)) \subset \ZZ. \]

\subsubsection{The case of a 0-surgery}
Suppose we are given an embedded isotropic 1-disk $D_S \subset Y$ that bounds $S \subset L$ and satisfies (a) and (b) in Definition \ref{def:surgerydisc}. We fix an arbitrary frame of the normal bundle of $S \subset L$. When $n>1$ we can always find a Lagrangian frame of the symplectic normal bundle of $D_S$ which makes it into an isotropic surgery disc compatible with $S$, as explained in part (2) of Remark \ref{rem:1disc}. Furthermore, the choice of such a Lagrangian frame lives in
\[\pi_1(U(n-1))\simeq \ZZ.\]

Under the additional assumption that the 1-handle attachment adds a generator $\gamma \in H_1(V_S)$, the following can be said. Consider two choices $m_1,m_2 \in \ZZ \simeq \pi_1(U(n-1))$ of Lagrangian frames of $D_S$. The two corresponding Legendrian ambient surgeries give rise to two diffeomorphic cobordisms, and the evaluation of the Maslov class on $\gamma$ for these two choices can be seen to differ by $2(m_1-m_2)$.

\subsubsection{The case of a $k$-surgery with $0<k<n-1$}
Let $X \cup (D^{k+1} \times D^{n-k})$ be a $(k+1)$-handle attachment on the manifold $X$ with non-empty boundary. From the associated long exact sequence in singular homology
\begin{align*}
\cdots &\rightarrow H_{2}(D^{k+1}\times D^{n-k},S^{k}\times D^{n-k}) \rightarrow H_1(X) \\
&  \rightarrow H_1(X \cup (D^{k+1} \times D^{n-k})) \rightarrow H_{1}(D^{k+1}\times D^{n-k},S^{k}\times D^{n-k})  \\
&  \rightarrow H_0(X) \rightarrow \cdots, 
\end{align*}

together with the fact that
\[H_{1}(D^{k+1}\times D^{n-k},S^{k}\times D^{n-k}) \simeq H_1(S^{k+1})\]
we conclude that the above map
\[H_1(X) \rightarrow H_1(X \cup (D^{k+1} \times D^{n-k}))\]
is surjective, since $k>0$.

Setting $X=(-\infty,-1] \times L$ and identifying $X \cup (D^{k+1} \times D^{n-k})$ with $V_S$, we conclude that
\[\mu(H_1(L))=\mu(H_1(V_S)) \subset \ZZ\]
whenever $L_S$ is obtained from $L$ by an ambient $k$-surgery with $k>0$.

Since $V_S$ equivalently can be viewed as a smooth manifold obtained by a $(n-k)$-handle attachment on $(-\infty , -1] \times L_S$, the analogous long exact sequence now shows that
\[\mu(H_1(L_S))=\mu(H_1(L))=\mu(H_1(V_S)) \subset \ZZ,\]
whenever $L_S$ is obtained from $L$ by a Legendrian ambient $k$-surgery with $0<k<n-1$.

\subsection{A radial contraction of the isotropic surgery disc}
\label{sec:contracting}
Assume that we are given an isotropic surgery disc $D_S$ of dimension $k+1$ which is compatible with a framed embedded sphere $S \subset L$ inside a Legendrian submanifold. Let $D_{S'} \subset D_S$ be a smooth embedding of a closed $(k+1)$-disc. Assume that there is an isotopy of $D_S$ taking $S$ to the boundary $\partial D_{S'}$.

This isotopy can be extended to a contact isotopy of the ambient contact manifold $Y$ (see e.g.~Lemma \ref{lem:ham}) having support in some arbitrarily small neighbourhood of $D_S$. We use $(L',S')$ to denote the image of $(L,S)$ induced by this isotopy, where $L'$ thus is Legendrian isotopic to $L$ and for which $S'=\partial D_{S'}$. Moreover, choosing this extension with some care, a Legendrian ambient surgery on $S' \subset L'$ determined by the isotropic surgery disc $D_{S'}$ may in fact be assumed to reproduce our original Legendrian ambient surgery, that is:
\[ (L')_{S'}=L_S.\]

Take any $E>0$. After choosing $D_{S'} \subset D_S$ to be sufficiently small, we may assume that all Reeb chords on $L' \cup D_{S'}$ that start or end on $D_{S'}$ have action at least $E$. We conclude that
\begin{lem}
\label{lem:contracting}
For $0 \le k \le n-1$ we may assume that $L_S=(L')_{S'}$ is obtained from $L'$ by a Legendrian ambient $k$-surgery on $S'$, where $(L',S')$ is isotopic to $(L,S)$ by a contact isotopy, and for which
\[ \{ c \in \mathcal{Q}(L_S); \: \ell(c) \le E \}=\{ c \in \mathcal{Q}(L'); \: \ell(c) \le E \} \cup \{ c_S\}.\]
\end{lem}
Observe that when $k<n-1$ this result also follows by a general position argument; see part (4) of Remark \ref{rem:ambientsurgery}.

\subsection{Well-definedness of the Legendrian ambient 0-surgery}

It was proven in \cite[Lemma 3.2]{OnConnectedSum} that cusp connected sum is a well-defined operation on Legendrian knots. Since Legendrian ambient 0-surgery is a generalisation of cusp connected sum, the below proposition provides a positive answer to a question posed in \cite{NonIsoLeg} concerning the well-definedness of cusp connected sums in higher dimensions.

For simplicity we here only consider the case when the contact manifold is the standard contact $(2n+1)$-space $(\CC^n \times \RR,dz-\sum_i y_idx_i)$. We also restrict ourselves to the case $n>1$. Recall that, given an framed embedded 0-sphere $S \subset L$, we can always find a compatible isotropic surgery disc $D_S$ (see part (2) of Remark \ref{rem:1disc}).

\begin{prop}\label{prop:well-def}
Let $L \subset \CC^n \times \RR$ be a Legendrian submanifold, where $n>1$, and let $S \subset L$ a framed embedded 0-sphere.
\begin{enumerate}
\item Suppose that $L \cap \{x_1=0\}=\emptyset$ and that the two points $S \subset L$ are separated by the hyperplane $\{x_1=0\} \subset \CC^n \times \RR$. It follows that the Legendrian isotopy class of $L_S$ is invariant under isotopy of the framed 0-sphere $S$ and independent of the choice of a compatible isotropic surgery disc $D_S$.
\item If $L$ is connected, then the Legendrian isotopy class of $L_S$ depends only on the Lagrangian frame of the symplectic normal bundle of the isotropic surgery disc $D_S$.
\end{enumerate}
\end{prop}
\begin{proof}
Consider the following set-up. Let $S,S' \subset L$ be isotopic framed embeddings of a 0-sphere, and let $D_S$ and $D_{S'}$ be isotropic surgery 1-discs compatible with $S$ and $S'$, respectively.

Since contact isotopies can be seen to act transitively on points in a Legendrian submanifold (see Lemma \ref{lem:ham}), after a contact isotopy we may assume that $S=S'$ and that the frames of $NS=NS'$ agree. This also shows that the outward-normal vector fields of the two isotropic surgery discs $D_S$ and $D_{S'}$ may be assumed to coincide. Moreover, after performing the contact isotopy with some additional care, we may assume that the isotropic surgery discs themselves agree in some neighbourhood of $S \subset Y$.

Here the ambient contact manifold is of dimension $2n+1 \ge 5$, and it thus follows by the h-principle for subcritical isotropic embeddings that the isotropic surgery discs are isotopic by a contact isotopy supported outside of some small neighbourhood of $L$ (also, see Remark \ref{rem:hprinciple}).

In view of the above, what remains is to investigate how the construction depends on the choice of a Lagrangian frame of the symplectic normal bundle of the isotropic surgery disc $D_S$. Recall that two Lagrangian frames of $D_S$ that are homotopic relative the boundary of $D_S$ determine Legendrian ambient surgeries whose resulting Legendrian submanifolds are Legendrian isotopic (see part (3) of Remark \ref{rem:ambientsurgery}). Also, these homotopy classes are in bijection with $\pi_1(U(n-1)) \simeq \ZZ$.

(1): By assumption we have $L \cap \{ -\epsilon < x_1 < \epsilon \} = \emptyset$ for some $\epsilon>0$, where the two components of $L$ containing the respective component of $S$ are separated by the hyperplane $\{x_1=0\}$. Without loss of generality, we may thus assume that
\[D_S \cap \{ -\epsilon < x_1 < \epsilon \} = \left\{\begin{array}{l}|x_1|< \epsilon, \\ x_2=\cdots=x_n=0, \\ y_1=\cdots=y_n=0, \\z=0 \end{array}  \right\}.\]

Consider the autonomous Hamiltonian
\begin{gather*}
H \co \CC^n \to \RR, \\
H(\mathbf{x},\mathbf{y}):=\psi(x_1)(x_2^2+y_2^2)/2,
\end{gather*}
where $\psi(x_1) \ge 0$ is a smooth function satisfying $\psi(x_1)=1$ for $x_1<0$ and $\psi(x_1)=0$ for $x_1 \ge \epsilon$. The corresponding Hamiltonian isotopy
\[\phi^t_H \colon \left(\CC^n,\sum_i dx_i\wedge dy_i\right) \to \left(\CC^n,\sum_i dx_i\wedge dy_i\right)\]
has a unique lift to a contact-form preserving isotopy
\begin{gather*}
\varphi^t \colon (\CC^n \times \RR,\lambda_0) \to (\CC^n \times \RR,\lambda_0),\\
\varphi^t(\mathbf{x},\mathbf{y},z) = (\phi^t_H(\mathbf{x},\mathbf{y}),z+g_t(\mathbf{x},\mathbf{y})),
\end{gather*}

satisfying $\varphi^0 = \Id$ and $\dot{g}_t=-\theta_{\RR^n}(\dot{\phi}^t_H)-H\circ \phi^t_H$. We also compute that
\begin{align*}
\varphi^{2\pi l}|_{\{ x_1 \le 0 \}}&=\Id|_{\{ x_1 \le 0 \}}, \quad  l \in \ZZ,\\
\varphi^t|_{\{ x_1 \ge \epsilon \}}&=\Id|_{\{ x_1 \ge \epsilon \}}, \quad t \in \RR,
\end{align*}

while $\varphi^t$ can be seen to preserve $D_S \cap \{ -\epsilon < x_1 < \epsilon \}$ for all $t \in \RR$ (clearly $dH$ vanishes along the latter arc, and $(\phi^{2\pi l}_H)|_{\{ x_1 \le 0 \}}=\Id$).

The subgroup $\{\varphi^{2\pi l}\}_{l\in \ZZ} \simeq \ZZ$ of contactomorphisms (all of which are contact isotopic to the identity by construction) hence fix $L \cup D_S$, and can moreover be seen to act transitively on the Lagrangian frames of the symplectic normal bundle of $D_S$ (relative its boundary).

(2): The computations in Section \ref{sec:maslov} show that different choices of homotopy classes of Lagrangian frames induce Legendrian embeddings of $L_S \subset Y$ that have different Maslov classes with respect to a fixed parametrisation of $L_S$. It follows that the parametrised Legendrian isotopy class of $L_S \subset Y$ indeed depends on the homotopy class of this Lagrangian frame.
\end{proof}

\section{Pseudo-holomorphic discs with boundary on the elementary cobordism}
The goal in this section is computing the DGA morphism induced by an elementary Lagrangian cobordisms $V_S$ from $L$ to $L_S$, and thereby proving Theorem \ref{thm:surjection}. This is done by analysing the behaviour of pseudo-holomorphic discs in $\RR \times Y$ having boundary on $V_S$. To that end it will be crucial to control the ``size'' of the non-cylindrical part of the handle, which we do by considering the family $V_{S,\epsilon}$ depending on the parameter $\epsilon>0$. We start by recalling the construction of $V_S$, from which the existence of this one-parameter family is immediate.

\subsection{A one-parameter family of elementary Lagrangian cobordisms}
\label{sec:oneparam}
Assume that we are given the Legendrian ambient surgery data consisting of the isotropic surgery disc $D_S$ with boundary on $S \subset L$, where $S$ is a framed embedded $k$-sphere inside the $n$-dimensional Legendrian submanifold $L\subset (Y,\lambda)$.

The corresponding elementary Lagrangian cobordism is constructed in Section \ref{sec:constructV} by, starting from the trivial cobordism $\RR \times L$, excising a cylindrical neighbourhood of $\RR \times D_S \subset \RR \times Y$ and replacing it with the standard model of the handle. In order to construct the one-parameter family of elementary Lagrangian cobordisms, it will be necessary to recall some details of this construction.

First, as in Section \ref{sec:nbhd}, we consider the neighbourhood
\[
\RR \times U_{\epsilon_S} := 
\left\{ \begin{array}{l}
q_1^2 + \cdots + q_{k+1}^2 < 1+\epsilon_S,\\
q_{k+2}^2 + \cdots + q_{n}^2 < 2\epsilon_S, \\
|p_i|< 3\sqrt{\epsilon_S(1+\epsilon_S)},\quad i \leq n, \\
|z|< 10\epsilon_S^{1/6}
\end{array}
\right\}
\subset \RR \times J^1\RR^n
\]
for any $\epsilon_S>0$. The main feature of $\RR \times U_{\epsilon_S} $ is that it contains the non-cylindrical part of the standard model $W_{\epsilon,k} \subset \RR \times J^1\RR^n$ of the handle for each $0 < \epsilon \le \epsilon_S$ (see Lemma \ref{lem:nbhd}).

For $\epsilon_S>0$ sufficiently small, the Legendrian ambient surgery data determines an exact symplectomorphism
\[ (\Id_\RR,\phi) \co \RR \times U_{\epsilon_S} \to \RR \times \phi(U_{\epsilon_S}),\]
where $\phi(U_{\epsilon_S}) \subset Y$ is a neighbourhood of $D_S$ (see Section \ref{sec:constructV}). Using this identification, the sought one-parameter family of elementary Lagrangian cobordisms
\[ V_{S,\epsilon} \subset (\RR \times Y,d(e^t\lambda)), \quad 0<\epsilon \le \epsilon_S, \]
from $L$ to $L_{S,\epsilon}$ is now defined as follows:
\begin{itemize}
\item We have an equality
\[V_{S,\epsilon} \setminus (\Id_\RR,\phi)(\RR \times U_{\epsilon_S})=(\RR \times L) \setminus (\Id_\RR,\phi)(\RR \times U_{\epsilon_S})\]
of cylindrical subsets, while,
\item $V_{S,\epsilon} \cap (\Id_\RR,\phi)(\RR \times U_{\epsilon_S}) = (\Id_\RR,\phi)(W_{\epsilon,k})$ is the image of the standard model of the handle.
\end{itemize}
Here Lemma \ref{lem:nbhd} is crucial. Finally, we write $V_S := V_{S,\epsilon_S}$.

We will later need to produce estimates of the symplectic areas of the pseudo-holomorphic discs entering the handle. In order to do this, we want to fix an embedding of the normal sphere-bundle of $S \subset L$ (i.e. an embedding of the unit normal-bundle induced by some choice of metric) that moreover is contained in a subset over which the cobordisms $V_{S,\epsilon}$, $0<\epsilon\le \epsilon_S$, all are cylindrical. More precisely, we fix an open embedding
\begin{gather*}
\Sigma \hookrightarrow L \setminus \phi(U_{\epsilon_S}) \subset Y,\\
\Sigma \simeq S \times S^{n-k-1} \times (-1,1),
\end{gather*}
for which the induced embedding of $\Sigma_0:=S \times S^{n-k-1} \times \{0\}$ can be identified with an embedding of the normal sphere-bundle of $S \subset L$.

\subsubsection{Assumptions on the Reeb dynamics near the surgery region}
\label{sec:Reebassumptions}
We will in the following only consider Reeb chords on $L$ below some fixed action $E>0$. We furthermore assume that all Reeb chords on $L \cup D_S$ that start or end on $D_S$ have action at least $3E$. When $k<n-1$ this can be arranged by a general position argument, while for $k=n-1$ one may have to first perform a Legendrian isotopy as described in Section \ref{lem:contracting}. Given this assumption, it is now possible to choose $\epsilon_S>0$ sufficiently small, so that
\[ \{ c \in \mathcal{Q}(L_{S,\epsilon}); \: \ell(c) \le E \}=\{ c \in \mathcal{Q}(L); \: \ell(c) \le E \} \cup \{ c_S\}\]
holds for each $0 < \epsilon \le \epsilon_S$.

Also, the above assumption has the following important consequence. Recall the above choice of neighbourhood $\Sigma \subset L$ of the normal sphere-bundle of $S \subset L$. After shrinking $\epsilon_S>0$ further (and choosing $\Sigma$ even closer to $S$), the Reeb flow $\phi^t_R$ on $(Y,\lambda)$ can be assumed to have the property that
\[\phi^{[-2E,2E] \setminus \{0\}}_R(\Sigma) \cap (L \cup \phi(U_{\epsilon_S}))=\emptyset\]
holds for the above constant $E>0$.

\subsection{Preliminaries}
\label{sec:prel}
We start by recalling the formulas for the different forms of energies of pseudo-holomorphic discs in $\RR \times Y$ having boundary on $V_{S,\epsilon}$, $0<\epsilon \le \epsilon_S$. While some of the results will be valid for an arbitrary cylindrical almost complex structure $J_{\OP{cyl}}$, for others we will need to use a cylindrical almost complex structure $J_S$ of $\RR \times Y$ which has been constructed with some special care. This is also done below.

\subsubsection{The energies of pseudo-holomorphic discs with boundary on $V_S$}
By construction, $V_{S,\epsilon}$ is cylindrical outside of the set $I_\epsilon \times Y$ with
\[I_\epsilon=[\log{(1-\epsilon^{1/3})} ,\log{(1+\epsilon^{1/3})} ].\]
Furthermore, the primitive of $e^t\lambda$ pulled back to $V_{S,\epsilon}$ may be supposed to vanish outside of a compact set. Let $J_{\OP{cyl}}$ denote a cylindrical almost complex structure on $\RR \times Y$. Proposition \ref{prop:energy} can be applied, giving us
\begin{align}
\label{eq:denergy}
0 & \leq  E_{d(\varphi_{I_\epsilon}\lambda)}(u)= \frac{1+\epsilon^{1/3}}{1-\epsilon^{1/3}}\ell(a) - (\ell(b_1) + \cdots +\ell(b_m)),\\
\label{eq:lenergy}
0 & <  E_{\lambda,I_\epsilon}(u)  = \frac{1+\epsilon^{1/3}}{1-\epsilon^{1/3}}\ell(a),\\
\label{eq:totenergy}
0 & <  E_{I_\epsilon}(u)=2\frac{1+\epsilon^{1/3}}{1-\epsilon^{1/3}}\ell(a) - (\ell(b_1) + \cdots +\ell(b_m)),
\end{align}
for any $J_{\OP{cyl}}$-holomorphic disc $u \in \mathcal{M}_{a;\mathbf{b};A}(V_{S,\epsilon};J_{\OP{cyl}})$ having boundary on $V_{S,\epsilon}$, a positive puncture at $a$, and negative punctures at $\mathbf{b}=b_1 \cdots  b_m$.

\subsubsection{A cylindrical almost complex structure that is integrable near the core disc}
\label{sec:integrable}

There is a unique cylindrical almost complex structure $J_0$ on $\RR \times J^1\RR^n$ having the property that the canonical projection
\[\pi \co \RR \times J^1\RR^n \to T^*\RR^n = \CC^n \]
is $(J_0,i)$-holomorphic. Furthermore, it can be checked that the identification
\begin{gather*}
(\RR \times J^1\RR^n,J_0) \to (\CC^n \times \CC,i), \\
(t,((\mathbf{q},\mathbf{p}),z)) \mapsto (\mathbf{q}+i\mathbf{p},t-\|\mathbf{p}\|^2/2+iz),
\end{gather*}
is a biholomorphism. In particular, the function
\begin{gather*}
\pi_\CC \co \RR \times (J^1\RR^n,J_0) \to (\CC,i), \\
(t,((\mathbf{q},\mathbf{p}),z)) \mapsto (t-\|\mathbf{p}\|^2/2+iz),
\end{gather*}
is holomorphic.

We use the exact symplectomorphism
\[(\Id_\RR,\phi) \colon \RR \times U_{\epsilon_S} \to \RR \times \phi(U_{\epsilon_S}) \]
used in the construction of $V_{S,\epsilon}$ (see Section \ref{sec:oneparam}), where $U_{\epsilon_S} \subset J^1\RR^n$ is a neighbourhood, to push forward $J_0$ to an almost complex structure on $\RR \times \phi(U_{\epsilon_S})\subset \RR \times Y$.

\subsubsection{A cylindrical almost complex structure $J_S$ that admits a local anti-holomorphic involution fixing $\Sigma$}
\label{sec:involution}
We will construct the sought cylindrical almost complex structure $J_S$ by requiring it to coincide with the push-forward of the above integrable almost complex structure $J_0$ in the neighbourhood $\RR \times \phi(U_{\epsilon_S})$, while we will prescribe it to have the following behaviour in a neighbourhood of
\[\RR \times \Sigma \subset \RR \times (L \setminus \phi(U_{\epsilon_S})).\]
Recall the neighbourhood $\Sigma \subset L \setminus \phi(U_{\epsilon_S})$ of $\Sigma_0 \subset L$ chosen in Section \ref{sec:oneparam}, where $\Sigma_0$ can be identified with the normal sphere-bundle of $S \subset L$.

A standard result implies that there exists an integrable almost complex structure $J_\Sigma$ on $T^*\Sigma$ for which the zero-section $0_\Sigma \subset T^*\Sigma$ is real-analytic. After shrinking the neighbourhood of the zero-section further, we may moreover assume that there exists an anti-holomorphic involution $\iota$ defined on this neighbourhood that fixes the zero-section pointwise.

It follows by Proposition \cite[Proposition 2.15]{SteinWeinstein} that, on some sufficiently small neighbourhood $D^*\Sigma \supset 0_\Sigma$, the squared distance $\rho \co D^*\Sigma \to \RR_{\ge 0}$ to the zero-section induced by some choice of Hermitian metric on $T^*\Sigma$ is a smooth plurisubharmonic function. In other words, writing
\[\theta_\rho:=-(d\rho)\circ J_\Sigma,\]
it follows that $d\theta_\rho$ is an exact symplectic form defined on $D^*\Sigma$ for which $J_\Sigma$ is a compatible almost complex structure. The restriction of $\theta_\rho$ to $0_\Sigma$ vanishes, which implies that the zero-section is an exact Lagrangian submanifold. Clearly the primitive of the pull-back of $\theta_\rho$ to $0_\Sigma$ can moreover be taken to \emph{vanish}.

The neighbourhood theorem \cite[Theorem 6.2.2]{IntroContact} for isotropic submanifolds can now be used to identify a neighbourhood of $\Sigma \subset L \subset Y$ with the contact manifold
\[(D^*\Sigma \times [-2E,2E], dz+\theta_\rho)\]
by a contact-form preserving diffeomorphism that moreover identifies
\[0_\Sigma \times \{0\} \subset D^*\Sigma \times [-2E,2E]\]
with $\Sigma \subset L$. Here have used the assumption in Section \ref{sec:Reebassumptions} regarding the behaviour of the Reeb flow of $(Y,\lambda)$ restricted to $\Sigma \subset Y$ to conclude that the neighbourhood may be taken on this very form, where $E>0$ is the number fixed above.

Again, there is a unique cylindrical almost complex structure $J_1$ on the symplectisation $\RR \times (D^*\Sigma \times [-2E,2E])$ defined by the requirement that the canonical projection
\[ (\RR \times (D^*\Sigma \times [-2E,2E]),J_1) \to (D^*\Sigma,J_\Sigma)\]
is holomorphic. We will require the cylindrical almost complex structure $J_S$ to coincide with the push-forward of $J_1$ under the above identification. It thus follows that there exists a neighbourhood $\RR \times O_\Sigma \subset \RR \times Y$ of $\Sigma \times \RR$ for which
\begin{itemize}
\item $\phi^{[-2E,2E]}_R(\Sigma) \subset O_\Sigma$;
\item $O_\Sigma \cap L = \Sigma$; and such that
\item there exists an anti-holomorphic involution of $(\RR \times O_\Sigma,J_S)$ that fixes $\RR \times \Sigma$ pointwise.
\end{itemize}

To see the last property, we argue as follows. It is readily checked that there is a holomorphic open embedding
\begin{gather*}
(\RR \times (D^*\Sigma \times [-2E,2E]),J_1) \to (D^*\Sigma\oplus \CC, J_\Sigma \oplus i),\\
(t,((\mathbf{q},\mathbf{p}),z)) \mapsto ((\mathbf{q},\mathbf{p}),t-\rho(\mathbf{q},\mathbf{p})+iz).
\end{gather*}
Using this embedding, the anti-holomorphic involution $\iota$ of $(D^*\Sigma,J_\Sigma)$ can be seen to lift to the required anti-holomorphic involution of $(\RR \times (D^*\Sigma \times [-2E,2E]),J_1)$.

\subsection{Properties of pseudo-holomorphic discs with\\ boundary on $V_S$}

Recall that, by the construction in Section \ref{sec:oneparam}, the exact Lagrangian cobordisms $V_{S,\epsilon}$, $0 < \epsilon \le \epsilon_S$, are all cylindrical outside of the subset $\RR \times \phi(U_{\epsilon_S})$.

\begin{lem}
\label{lem:mono}
There is a constant $E_0>0$ that only depends on the cylindrical almost complex structure $J_{\OP{cyl}}$ such that, for each $0 <\epsilon \le \epsilon_S$, any solution $u \in \mathcal{M}_{a;\mathbf{b};A}(V_{S,\epsilon};J_{\OP{cyl}})$ passing through $\RR \times \partial \phi(U_{\epsilon_S})$ satisfies
\[ E_{I_\epsilon}(u) \ge E_0>0.\]
\end{lem}
\begin{proof}
For each $t_0 \in \RR$ we consider the symplectic form
\[ \omega_{t_0} = e^{-(t_0+1/2)}d(e^t \lambda)=e^{t-(t_0+1/2)}dt \wedge \lambda+ e^{t-(t_0+1/2)}d\lambda\]
on $\RR \times Y$ and observe that $J_{\OP{cyl}}$ is compatible with $\omega_{t_0}$.

The subset $\partial \phi( U_{(3/4)\epsilon_S}) \subset Y$ is compact and, moreover, has a compact neighbourhood $O \subset Y$ satisfying
\[ V_{S,\epsilon} \cap (\RR \times O) =V_{S,\epsilon_S} \cap (\RR \times O)=\RR \times (L \cap O)\]
for each $0<\epsilon \le \epsilon_S$ (see Section \ref{sec:oneparam}).

The monotonicity property for the $\omega_{t_0}$-area of $J_{\OP{cyl}}$-holomorphic curves with and without boundary \cite[Propositions 4.3.1 and 4.7.2]{SomeProp} applies to pseudo-holomorphic curves $u$ as above passing through a point in $\{t_0\} \times \partial \phi( U_{(3/4)\epsilon_S})$.

More precisely, there is a constant $C>0$ only depending on $J_{\OP{cyl}}$ for which the following holds. Any non-trivial $J_{\OP{cyl}}$-holomorphic curve $u$ inside the compact set $\left[t_0-\frac{1}{2},t_0+\frac{1}{2}\right] \times O$ that passes through $\{t_0\} \times \partial \phi( U_{(3/4)\epsilon_S})$ and whose boundary is contained inside the compact set set
\[([t_0-1/2,t_0+1/2] \times (L \cap O)) \cup \partial([t_0-1/2,t_0+1/2] \times O)\]
has $\omega_{t_0}$-area satisfying the lower bound
\[ E_u:=\int_{u \cap \left(\left[t_0-\frac{1}{2},t_0+\frac{1}{2}\right] \times O\right)} \omega_{t_0} \ge C >0.\]

Now, let $u$ be a $J_{\OP{cyl}}$-holomorphic curve as in the assumption of the lemma. It follows that the above inequality holds for its $\omega_{t_0}$-area contained in the set $\left[t_0-\frac{1}{2},t_0+\frac{1}{2}\right] \times O$.

By the construction of the total energy, it follows that the inequality
\[2E_{I_\epsilon}(u) \ge \int_u \rho^-(t)dt \wedge \lambda+  \int_u \rho^+(t)dt \wedge \lambda+\frac{2}{1-\epsilon^{1/3}}\int_u d(\varphi_{I_\epsilon}(t) \lambda)\]
holds for any choice of non-negative functions $\rho^-(t)$ and $\rho^+(t)$ supported in the sets $\{t \le \log{(1-\epsilon^{1/3})}\}$ and $\{t \ge \log{(1+\epsilon^{1/3})}\}$, respectively, and satisfying $\int_{\RR} \rho^\pm(t)dt=1$. Together with the inequalities $\frac{2}{1-\epsilon^{1/3}}\varphi_{I_\epsilon}(t) \ge 2$ and
\[\frac{2}{1-\epsilon^{1/3}} \varphi_{I_\epsilon}'(t) \ge  \begin{cases}
2, & t \in I_\epsilon=[\log{(1-\epsilon^{1/3})} ,\log{(1+\epsilon^{1/3})} ],\\
0, & t \notin I_\epsilon=[\log{(1-\epsilon^{1/3})} ,\log{(1+\epsilon^{1/3})} ],
\end{cases}\]
together with appropriate choices of functions $\rho^\pm$, the inequality
\[ 2E_{I_\epsilon}(u) \ge E_u\]
can now readily be seen to follow. The statement thus holds for the choice of constant $E_0 := C/2>0$.
\end{proof}

Using (\ref{eq:totenergy}) and Remark \ref{rem:ambientsurgery} we conclude that $u \in \mathcal{M}_{c_S;\mathbf{b};A}(V_{S,\epsilon};J_{\OP{cyl}})$ satisfies
\[0 < E_{I_\epsilon}(u) = 2\frac{1+\epsilon^{1/3}}{1-\epsilon^{1/3}}\ell(c_S)-(\ell(b_1)+\cdots+\ell(b_m)) \le 2\frac{1+\epsilon^{1/3}}{1-\epsilon^{1/3}}\epsilon^{3/2}/\sqrt{2}\]
for any $u \in \mathcal{M}_{c_S;\mathbf{b};A}(V_{S,\epsilon};J_{\OP{cyl}})$. Together with Lemma \ref{lem:mono}, it immediately follows that such a disc must be disjoint from $\RR \times \partial \phi( U_{(3/4)\epsilon_S})$ given that $0 < \epsilon \le \epsilon_S$ is sufficiently small. In other words
\begin{cor}
\label{cor:smalldisc}
For sufficiently small $0<\epsilon \le \epsilon_S$, every $J_{\OP{cyl}}$-holomorphic disc $u \in \mathcal{M}_{c_S;\mathbf{b};A}(V_{S,\epsilon};J_{\OP{cyl}})$ is contained inside $\RR \times \phi( U_{(3/4)\epsilon_S})$. In particular, it must satisfy $\mathbf{b}=\emptyset$.
\end{cor}

Loosely speaking, the above corollary tells us that the small discs that have a positive puncture inside of the handle never exit to the cylindrical part. We proceed to show that certain small discs having a positive puncture in the cylindrical part never enter the non-trivial part of the handle (and hence are trivial strips). As it turns out, this will require a somewhat deeper analysis.

\begin{lem}
\label{lem:uniquedisc}
Let $J_S$ be a cylindrical almost complex structure as constructed in Section \ref{sec:integrable}. For $0<\epsilon \le \epsilon_S$ sufficiently small and $a \in \mathcal{Q}(L)$, the only solution in $\cup_{A \in H^1(V_{S,\epsilon})}\mathcal{M}_{a;a;A}(V_{S,\epsilon};J_S)$ is the trivial strip $\RR \times a$.
\end{lem}

\begin{proof}
We will show that any sequence $u_i \in \mathcal{M}_{a;a;A}(V_{S,\epsilon_i};J_S)$ of strips with $\lim_{i\to +\infty} \epsilon_i=0$ consists of the trivial strips $u_i=\RR \times a$ for all $i \gg 0$ sufficiently large.

First, the above formulas for the energy here yield
\begin{align*}
E_{d(\varphi_{I_{\epsilon_i}}\lambda)}(u_i) &= \frac{1+\epsilon_i^{1/3}}{1-\epsilon_i^{1/3}}\ell(a)-\ell(a)=\frac{2\epsilon_i^{1/3}}{1-\epsilon_i^{1/3}}\ell(a),\\
E_{\lambda,I_{\epsilon_i}}(u_i) &= \frac{1+\epsilon_i^{1/3}}{1-\epsilon_i^{1/3}}\ell(a),
\end{align*}
and, thus, in particular
\begin{align}
\label{eq:energylimit} \lim_{i \to +\infty} E_{d(\varphi_{I_{\epsilon_i}}\lambda)}(u_i)&= 0, \\
\label{eq:energylimit2} \lim_{i \to +\infty} E_{\lambda,I_{\epsilon_i}}(u_i)&= \ell(a).
\end{align}

We argue by contradiction. Assume that there is an infinite subsequence of $\{u_i\}$ satisfying the property that the boundary of the strip $u_i$ passes through
\[\{t_i\} \times \Sigma_0 \subset \{ t_i \} \times \Sigma \subset \{t_i\} \times L.\]
Recall that $\Sigma_0 \subset \Sigma \subset L$ is a smooth embedding of $S \times S^{n-k-1}$ that can be identified with the normal sphere-bundle of $S$, and that $\RR \times \Sigma \subset \RR \times Y$ is fixed by an anti-holomorphic involution defined in the neighbourhood $\RR \times O_\Sigma$.

The target-local version of Gromov's compactness theorem \cite[Theorem A]{TargetLocal} can be applied to the induced subsequence
\[u_i(\dot{D}^2) \cap ([t_i-1,t_i+1] \times O_\Sigma) \subset [t_i-1,t_i+1] \times O_\Sigma\]
of $J_S$-holomorphic curves to extract a convergent subsequence (we give\linebreak a justification of this at the end of this proof). After a translation of\linebreak the $t$-coordinate, the limit $\widetilde{u}_\infty$ may be considered to be a non-trivial $J_S$-holomorphic curve in $[-2,2] \times O_\Sigma$ having boundary on
\[([-2,2] \times \Sigma) \cup \partial([-2,2] \times O_\Sigma)\]
passing through $\RR \times \Sigma_0 \subset \RR \times \Sigma$.

By Formula (\ref{eq:energylimit}), the limit $\widetilde{u}_\infty$ must be contained inside a trivial strip $\RR \times c$, where $c$ is some (possibly disconnected) integral curve of the Reeb vectorfield. By the construction of the neighbourhood $O_\Sigma \subset Y$ in Section~\ref{sec:involution} we have $\phi^{[-2E,2E]}_R(\Sigma) \subset O_\Sigma$, which now can be seen to imply the estimate
\[\int_{\widetilde{u}_\infty \cap \{ t \in I \}} dt \wedge \lambda \ge 2E > 2\ell(a)\]
for any sub-interval $I \subset [-2,2]$ being of length one. Observe that the latter inequality holds by definition, since we only consider Reeb chords of action less than $E$. This shows that $\{u_i\}$ has an infinite subsequence for which
\[\int_{u_i \cap \{ t \in I_i\}} dt \wedge \lambda > 2\ell(a)\]
is satisfied as well, where $I_i \subset [t_i-2,t_i+2]$ again denotes any interval of length one. However, the latter inequality clearly contradicts the limit (\ref{eq:energylimit2}) of the $\lambda$-energy of the solutions $u_i$.

This contradiction shows that the strips $u_i$ have boundary disjoint from $\RR \times \Sigma_0$ for each $i\gg 0$ sufficiently large. Since each such solution $u_i$ thus has boundary contained inside the cylindrical part
\[V_{S,\epsilon} \setminus (\RR \times \phi(U_{(3/4)\epsilon_S})) \subset \RR \times L\]
of the cobordism (see Section \ref{sec:oneparam}) as follows from topological considerations, such a solution must satisfy $E_{d\lambda}(u_i)=0$. In conclusion, we must in fact have $u_i=\RR \times a$ for all $i \gg 0$ sufficiently large.

We end by arguing that the target-local version of Gromov's compactness theorem indeed can be applied in this situation. First, by the assumptions on $J_S$ in Section \ref{sec:involution}, there exists an anti-holomorphic involution of $(\RR \times O_\Sigma,J_S)$ fixing $(\RR \times O_\Sigma) \cap V_{S,\epsilon}$ pointwise. This involution can be used to perform a Schwarz-reflection of each curve $u_i(\dot{D}^2) \cap (\RR \times O_\Sigma)$, thus producing a curve $C_i \subset [t_i-2,t_i+2] \times O_\Sigma$ which is $J_S$-holomorphic and whose boundary is contained in $\partial([t_i-2,t_i+2] \times O_\Sigma)$. The sequence $C_i$ can moreover be seen to satisfy the following properties:
\begin{itemize}
\item After translating the $t$-coordinate, each $C_i$ may be identified with a $J_S$-holomorphic curve inside $[-2,2] \times O_\Sigma$ having boundary on $\partial([-2,2] \times O_\Sigma)$;
\item Each $C_i$ is of genus zero; and
\item Since there is a uniform bound on the total energy $E_{I_{\epsilon_i}}(u_i)$ of a solution $u_i$, there hence is a uniform bound on the $d(e^t\lambda)$-area of a curve $C_i$.
\end{itemize}\vspace{-.5em}
\end{proof}

\subsection{Pseudo-holomorphic discs in the non-cylindrical part\\ of the handle}
\label{sec:cobordismdiscs}

We proceed to investigate more closely the behaviour of pseudo-holomorphic discs in $\RR\times Y$ having boundary on $V_{S,\epsilon}$ and a positive puncture asymptotic to the new Reeb chord $c_S$. Here the results will rely on the particular choice of almost complex structure $J_S$ made above. In particular, it is important that $J_S$ was constructed to be integrable inside the non-cylindrical part of $V_{S,\epsilon}$ (see Section \ref{sec:integrable}).

\begin{lem}
\label{lem:disccontained}
For sufficiently small $0<\epsilon \le \epsilon_S$, every $u \in \mathcal{M}_{c_S;\emptyset;A}(V_{S,\epsilon};J_S)$ is contained in the set
\[\RR \times \phi\left( U_\epsilon \cap \left\{\begin{array}{l}
q_1=\cdots=q_{k+1}=0,\\
p_1=\cdots=p_{k+1}=0,\\
t\geq 0
\end{array}
\right\}\right).\]
\end{lem}
\begin{proof}
By Corollary \ref{cor:smalldisc} we may assume that the image of $u$ is contained in $\Id_\RR \times \phi(U_{(3/4)\epsilon_S})$. We can thus use $(\Id_\RR,\phi)^{-1}$ to identify $u$ with a $J_0$-holomorphic disc in $\RR \times U_{\epsilon_S} \subset \RR \times J^1\RR^n$ having boundary on the exact Lagrangian cobordism $W_{\epsilon,k}$ as defined in Section \ref{sec:model}. We will again use $u$ to denote this $J_0$-holomorphic disc.

Consider the holomorphic projections
\[ \pi_i:=(q_i,p_i) \co \RR \times U_{\epsilon_S} \to \CC \]
for $i=1,\hdots,n$. Observe that, by the construction of $J_S$ in the neighbourhood $\RR \times U_{\epsilon_S}$, the composition $\pi_i\circ u$ is a holomorphic map from the closed unit disc $\dot{D}^2$ with one boundary point removed. The image
\[\pi_i(W_{\epsilon,k}) \subset \CC, \quad i=1,\hdots,k+1,\]
is shown in Figure \ref{fig:hyp}. Since $\pi_i \circ u$ maps the boundary to a compact subset of $\pi_i(W_{\epsilon,k})$, the open mapping theorem implies that
\begin{equation}\label{eq:openmap}
\pi_i \circ u(\dot{D}^2) \subset \pi_i(W_{\epsilon,k}), \quad i=1,\hdots,k+1.
\end{equation}

By contradiction, we assume that $\pi_i\circ u$ does not vanishing identically for some $i=1,\hdots, k+1$. Using the asymptotic properties of $u$, and the fact that the boundary-condition has a discontinuity near the puncture, it can be seen that $\pi_i\circ u$ fills some part of the corner either above or below $c_S$ shown in Figure \ref{fig:hyp}. This however contradicts the above inclusion (\ref{eq:openmap}). In other words, $u$ is contained in the set
\[\left\{\begin{array}{l}
q_1=\cdots=q_{k+1}=0,\\
p_1=\cdots=p_{k+1}=0
\end{array}\right\}\]
and, by examining the construction of $W_{\epsilon,k}$ in Section \ref{sec:model}, its boundary is thus contained in
\[W_{\epsilon,k} \cap \left\{\begin{array}{l}
q_1=\cdots=q_{k+1}=0,\\
p_1=\cdots=p_{k+1}=0
\end{array}\right\} \subset \{ t \ge 0\}.\]

Recall that there is a holomorphic projection
\begin{gather*} \pi_\CC \co (J^1\RR^n,J_0) \to (\CC,i), \\
(t,((\mathbf{p},\mathbf{q}),z)) \mapsto t-\|\mathbf{p}\|^2/2+iz,
\end{gather*}
and that the real-part $\mathfrak{Re}( \pi_\CC \circ u)$ thus is harmonic. Using the fact that
\begin{equation}
\label{eq:ineqre}
\mathfrak{Re}( \pi_\CC \circ u) \ge 0
\end{equation}
holds along the boundary, as well as in some neighbourhood of the boundary puncture, we conclude that (\ref{eq:ineqre}) holds on all of the domain $\dot{D}^2$. In particular, $t \circ u \ge 0$ holds everywhere, from which the statement of the lemma follows.

It remains to establish the inequality (\ref{eq:ineqre}) along the boundary $\partial\dot{D}^2$. We will do this via certain estimates in terms of the functions used in the construction of $W_{\epsilon,k}$, and we refer to Section \ref{sec:model} for their definitions.

The inequality in Formula (\ref{eq:boundp}) implies that
\[|p_i|=\left|\frac{\partial}{\partial x_i}F_\epsilon(\mathbf{x},x_{n+1})\right|\leq (3/2)F_\epsilon(\mathbf{x},x_{n+1})^{1-2/3}2\sqrt{(2/3)\epsilon}\]
holds along the boundary of $u$ for each $i=k+2, \hdots, n$, which can be translated into the inequality
\[|p_i|^2\leq 9\left(\left(x_{1}^2+\cdots+x_{k+1}^2\right)-\left(x_{k+2}^2+\cdots+x_{n}^2\right)+\varphi_\epsilon(\mathbf{x},x_{n+1})-1\right)(2/3)\epsilon\]
along the boundary. Since the boundary of $u$ is contained in the set
\[\left\{ \begin{array}{l}q_1=\cdots=q_{k+1}=0,\\
t\ge 0\end{array}\right\}=\left\{ \begin{array}{l}x_1=\cdots=x_{k+1}=0,\\
x_{n+1} \ge 1\end{array}\right\},\]
as was shown above, this estimate becomes
\[|p_i|^2\leq 6\epsilon\left(-\left(x_{k+2}^2+\cdots+x_{n}^2\right)+\sigma_\epsilon(x_{n+1})-1\right) \le6\epsilon(\sigma_\epsilon(x_{n+1})-1).\]
Observe that the right-hand side vanishes for $x_{n+1}=e^t$ when $t=0$. Using the inequality $0 \le \sigma_\epsilon'(x_{n+1}) \le (1+\epsilon)\epsilon^{-1/3}$ and setting $x_{n+1}=e^t$, for any $A>0$ we now obtain the inequality
\[(t/n-|p_i|^2/2)\circ u \ge 0\]
along $\partial\dot{D}^2 \cap u^{-1}\{t  \in [0,A]\}$ given that $0<\epsilon\le\epsilon_S$ is chosen sufficiently small. This immediately gives the sought inequality $\mathfrak{Re}( \pi_\CC \circ u) \ge 0$ along all of the boundary $\partial \dot{D}^2$.
\end{proof}

\begin{figure}[htp]
\centering
\labellist
\pinlabel $c_S$ at 153 73
\pinlabel $\pi_i(U_{\epsilon_S})$ at 85 118
\pinlabel $\pi_i(W_{\epsilon,k})$ at 57 72
\pinlabel $q_i$ at 305 63
\pinlabel $p_i$ at 145 145 
\pinlabel $\sqrt{1+\epsilon_S}$ at 230 72
\pinlabel $3\sqrt{\epsilon_S(1+\epsilon_S)}$ at 180 119
\endlabellist
\includegraphics{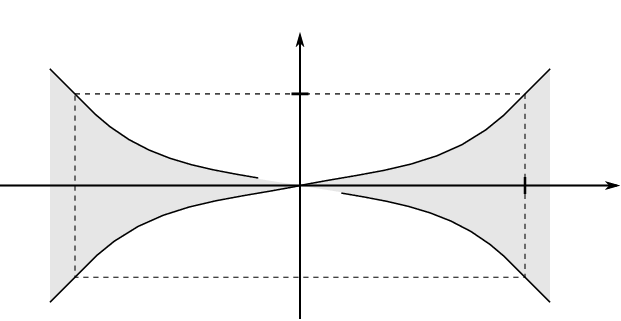}
\caption{The image of $W_{\epsilon,k}$ under the holomorphic projection $\pi_i$ for $i=1,\hdots,k+1$.}
\label{fig:hyp}
\end{figure}

\begin{figure}[htp]
\centering
\labellist
\pinlabel $c_S$ at 90 83
\pinlabel $\pi_i(U_{\epsilon_S})$ at 35 145
\pinlabel $\pi_i(W^+)$ at 61 90
\pinlabel $\pi_i(W^-)$ at 61 64
\pinlabel $q_i$ at 217 76
\pinlabel $p_i$ at 106 161
\pinlabel $3\epsilon$ at 118 113
\pinlabel $3\sqrt{\epsilon_S(1+\epsilon_S)}$ at 142 147
\pinlabel $\sqrt{\epsilon}$ at 168 85
\pinlabel $\sqrt{2\epsilon_S}$ at 212 89
\endlabellist
\includegraphics{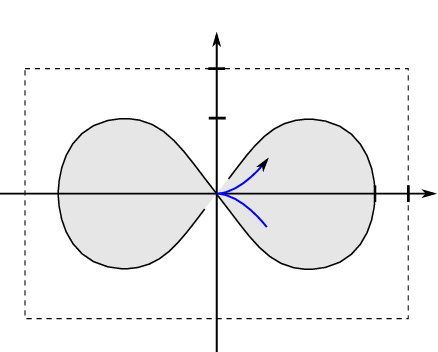}
\caption{The image of $W=W^+ \cup W^-$ under the holomorphic projection $\pi_i$ for $i=k+2,\hdots,n$.}
\label{fig:ell}
\end{figure}

Recall the construction of the core disc $C_{S,\epsilon} \subset V_{S,\epsilon}$ of the handle described in Section \ref{sec:constructV}.

\begin{lem}
\label{lem:handledisc}
For sufficiently small $0<\epsilon \le \epsilon_S$, each $J_J$-holomorphic disc $u \in \mathcal{M}_{c_S;\emptyset;A}(V_{S,\epsilon};J_S)$ whose boundary intersects $C_{S,\epsilon}$ is contained in the holomorphic strip $\RR_{\ge 0} \times c_S \subset \RR \times Y$. In particular, there is a unique such disc $u_0$.
\end{lem}
\begin{proof}
Let $u$ be such a disc. Using Corollary \ref{cor:smalldisc} we may identify $u$ with a $J_0$-holomorphic disc in $\RR \times U_{\epsilon_S}$ having boundary on $W_{\epsilon,k}$. Furthermore, Lemma \ref{lem:disccontained} implies that $u$ necessarily intersects $C_{S,\epsilon}$ in the set
\[C_{S,\epsilon} \cap \{ q_1=\cdots=q_{k+1}=0 \}=\{ \mathbf{q}=\mathbf{p}=z=t=0\}. \]

Recall that, by the construction of $J_S$, there are holomorphic projections
\[\pi_i:=(q_i,p_i)\co \RR \times U_{\epsilon_S} \to \CC.\]
We define
\begin{align*}
& W := W^+ \cup W^-,\\
& W^\pm := W_{\epsilon,k} \cap \left\{\begin{array}{l}
q_1=\cdots=q_{k+1}=0,\\
\pm z \ge 0,\\
t\geq 0
\end{array}\right\}.
\end{align*}
The image
\[\pi_i(W)\subset \CC, \quad i=k+2,\hdots,n,\]
is a filled figure-eight curves as shown in Figure \ref{fig:ell}. Since $\pi_i \circ u$ is holomorphic and maps the boundary into $\pi_i(W)$, the open mapping theorem implies that
\begin{equation}
\label{eq:openmap2}
\pi_i \circ u(\dot{D}^2 ) \subset \pi_i(W), \quad i=k+2,\hdots,n.
\end{equation}

We have seen that a boundary point $p$ mapped into $C_{S,\epsilon}$ by $u$ necessarily is mapped to the origin by $\pi_i \circ u$, and that the same is true for the holomorphic map
\[\pi_\CC \circ u=(t-\|\mathbf{p}\|^2/2+iz)\circ u \co \dot{D}^2 \to \CC.\]

Also, recall that $\mathfrak{Re}(\pi_\CC \circ u) \ge 0$ holds by (\ref{eq:ineqre}) established in the proof of Lemma \ref{lem:disccontained}. From this it follows that $\mathfrak{Im}(\pi_\CC \circ u)$ has the following behaviour at the boundary arc $\theta \mapsto e^{i\theta}p \in \partial D^2$ near $\theta=0$. There is some sufficiently small $\delta>0$ for which
\[z\circ u(e^{i\theta}p)=\mathfrak{Im} (\pi_\CC \circ u) (e^{i\theta}p) \in \begin{cases}  \RR_{>0}, & 0>\theta>-\delta, \\
\{0\}, & \theta=0,\\
\RR_{<0}, & \delta > \theta>0.
\end{cases}\]

By contradiction, we assume that $\pi_i \circ u$ does not vanish identically for $i=k+2,\hdots,n$. It readily follows that one of $\pm \pi_i \circ u$ must map the oriented boundary near $p$ as schematically depicted by the arrow in Figure \ref{fig:ell} (also, see the proof of Lemma \ref{lem:whitney}). However, in this case, we can use the open mapping theorem to get a contradiction with the inclusion (\ref{eq:openmap2}) established above. This contradiction thus shows that $u$ is contained inside the trivial strip $\RR_{\ge 0} \times c_S$ as claimed.
\end{proof}

Using the asymptotic properties, the above lemma shows that a solution $u_0 \in \mathcal{M}_{c_S;\emptyset;A}(V_{S,\epsilon};J_S)$ having boundary intersecting $C_{S,\epsilon}$ must be the unique embedded disc contained inside $\RR_{\ge 0} \times c_S$ and having boundary on $V_{S,\epsilon}$.

\begin{lem}
\label{lem:transversedisc}
For sufficiently small $0<\epsilon \le \epsilon_S$, the $J_S$-holomorphic disc $u_0 \in \mathcal{M}_{c_S;\emptyset;A}(V_{S,\epsilon};J_S)$ contained in $\RR_{\ge0} \times c_S$ is transversely cut out. Furthermore, the evaluation-map
\[\OP{ev} \co \mathcal{M}_{c_S;\emptyset;A}(V_{S,\epsilon};J_S) \times \partial \dot{D}^2 \to V_{S,\epsilon}\]
from the boundary is transverse to $C_{S,\epsilon}$ in some neighbourhood of $\{u_0\} \times \partial \dot{D}^2$.
\end{lem}
\begin{proof}
Again, we will identify $u_0$ with the corresponding $J_0$-holomorphic disc in $\RR \times U_{\epsilon_S} \subset \RR \times J^1\RR^n$. Since $u_0$ is an embedding, we will identify the domain of $u_0$ with its image.

Use
\[\left(\begin{array}{l}
\mathbf{q}=(\mathbf{q}_1,\mathbf{q}_2)=((q_1,\hdots,q_{k+1}),(q_{k+2},\hdots,q_n)),\\
\mathbf{p}=(\mathbf{p}_1,\mathbf{p}_2)=((p_1,\hdots,p_{k+1}),(p_{k+2},\hdots,p_n)),\\
z
\end{array}\right)\]
to denote the standard coordinates on $J^1(\RR^{k+1} \times \RR^{n-k-1})$. Similarly to the map considered in Section \ref{sec:integrable}, there is a biholomorphism
\begin{gather*}
\psi =(\psi_1,\psi_2,\psi_3) \co (\RR \times J^1\RR^n,J_0) \to (\CC^{k+1} \times \CC^{n-(k-1)} \times \CC,i), \\
(t,((\mathbf{q},\mathbf{p}),z)) \mapsto (\mathbf{q}_1+i\mathbf{p}_1,\mathbf{q}_2+i\mathbf{p}_2,t-\|\mathbf{p}\|^2/2+iz).
\end{gather*}

Since the almost complex structure is integrable in a neighbourhood of $u_0$, it follows that the linearisation of the Cauchy-Riemann operator at $u_0$ is the standard Cauchy-Riemann operator $\overline{\partial}$ acting on the trivial holomorphic vectorbundle $(u_0)^*(T\CC^{n+1})$.

We parametrise the punctured boundary of the domain of $u_0$ by
\begin{gather*}
(0,2\pi) \to \partial \dot{D}^2,\\
\theta \mapsto e^{i\theta},
\end{gather*}
for some choice of holomorphic coordinate that moreover satisfies the property that $e^{i\pi} \in \partial \dot{D}^2$ is the unique point evaluating to $u_0(e^{i\pi}) \in C_{S,\epsilon}$. Using the above biholomorphism, the linearised boundary condition
\[A(\theta):=T_{u_0(\theta)} W_{\epsilon,k} \subset T_{u_0(\theta)}(\CC^{k+1}\times \CC^{n-k-1} \times \CC) =\CC^{k+1} \oplus \CC^{n-k-1} \oplus \CC\]
can be seen to split into the direct sum
\[A(\theta)=A_1(\theta) \oplus A_2(\theta) \oplus A_3(\theta) \subset \CC^{k+1} \oplus \CC^{n-k-1} \oplus \CC\]
respecting the above decomposition of $\CC^{n+1}$. Furthermore, $A_i$ are families of Lagrangian subspaces of the form
\begin{align*}
 A_1(\theta)&= \RR\langle e^{i\varphi_1(\theta)} \mathbf{e}_j; \ 1\leq j\leq k+1 \rangle \subset \CC^{k+1},\\
 A_2(\theta)&= \RR\langle e^{i\varphi_2(\theta)} \mathbf{e}_j; \ k+2\leq j\leq n \rangle \subset \CC^{n-k-1},\\
 A_3(\theta)&= \RR e^{i\varphi_3(\theta)} \mathbf{e}_{n+1} \subset \CC,
\end{align*}
where $\{\mathbf{e}_i\}_{i=1,\hdots,{n+1}}$ denotes the standard basis of $\CC^{n+1}$. It can be checked that that $\varphi_1$ is non-increasing, while $\varphi_2$ is non-decreasing. These facts will be important in the argument below.

Since the above linearised boundary condition splits, the kernel of the linearised problem has an induced splitting $K=K_1 \oplus K_2 \oplus K_3$. Moreover, by elliptic regularity, this is a finite-dimensional space consisting of holomorphic functions
\begin{gather*}
\zeta \co D^2 \to \CC^{n+1},\\
\zeta(e^{i\theta}) \in A(\theta),
\end{gather*}
that moreover are continuous up to the boundary. 
Let
\[ \pi_i \co \CC^{k+1} \oplus \CC^{n-k-1} \oplus \CC \to \CC \]
be the orthogonal projection onto the $i$:th component. By the argument principle, together with the fact that $\varphi_1$ above is non-increasing, we immediately see that $\pi_i \circ \zeta \equiv 0$ vanishes identically for each $i=1, \hdots, k+1$. In other words, $K_1=0$. Furthermore, the solutions $\pi_{n+1} \circ \zeta \in K_3$ correspond to infinitesimal reparametrisations of the domain, and hence $\dim K_3=2$.

Investigating the Fredholm index of the linearised boundary-value problem we get
\[  \dim K_2 +2 = \dim K  \ge (n-k-1)+2,\]
where equality holds if and only if the cokernel vanishes, i.e.~if the solution $u_0$ is transversely cut out. To show transversality it therefore suffices to show that $\dim K_2=n-k-1$. We will simultaneously show that the evaluation map is transverse to $C_{S,\epsilon}$ in a neighbourhood of $\{u_0\} \times \partial \dot{D}^2 \subset  \mathcal{M}_{c_S;\emptyset;A}(V_{S,\epsilon};J_S) \times \partial \dot{D}^2$.

Recall that
\[(u_0,e^{i\pi}) \in \mathcal{M}_{c_S;\emptyset;A}(V_{S,\epsilon};J_S) \times \partial \dot{D}^2\]
is the unique point evaluating to $C_{S,\epsilon}$. Consider the linear map
\[D\OP{ev} =(D\OP{ev})_1+(D\OP{ev})_2 \co K \times \RR \to A(\pi) \subset \CC^{k+1} \times \CC^{n-k-1} \times \CC,\]
where
\begin{align*}
D\OP{ev}_1 (\zeta,\theta)&=\zeta(\pi),\\
D\OP{ev}_2(\zeta,\theta)&=\theta \partial_s u(e^{is})|_{s=\pi}.
\end{align*}
Note that the map $D\OP{ev}$ is indeed the differential of the evaluation-map $\OP{ev}$ at the point $(u_0,e^{i\pi}) \in  \mathcal{M}_{c_S;\emptyset;A}(V_{S,\epsilon};J_S) \times \partial \dot{D}^2$, given that the latter space is smooth near $u_0$.

Under the above biholomorphism, the tangent plane
\[T_{\OP{ev}(u_0,e^{i\pi})}C_{S,\epsilon} \subset T_{\OP{ev}(u_0,e^{i\pi})}V_{S,\epsilon}\]
is identified with
\[ \mathfrak{Re}{\CC^{k+1}} \oplus 0 \oplus 0 \subset \CC^{k+1} \oplus \CC^{n-k-1} \oplus \CC,\]
while the tangent plane $T_{\OP{ev}(u_0,e^{i\pi})}V_{S,\epsilon}$ is identified with
\[ A(\pi)= \mathfrak{Re}{\CC^{k+1}} \oplus \mathfrak{Re}(\CC^{n-k-1}) \oplus \mathfrak{Im}(\CC) \subset \CC^{k+1} \oplus \CC^{n-k-1} \oplus \CC.\]

Consider the linear subspace
\[V:=\mathrm{im}(D\OP{ev}) \subset A(\pi) = \mathfrak{Re}{\CC^{k+1}} \oplus \mathfrak{Re}(\CC^{n-k-1}) \oplus \mathfrak{Im}(\CC).\]
First, observe that $0 \oplus 0 \oplus \mathfrak{Im}(\CC) \subset V$ since the boundary of $u_0$ is embedded and tangent to this subspace at the boundary point $e^{i\pi} \in \partial \dot{D}^2$. Since $K_1=0$ we moreover conclude that
\[ V \subset 0\oplus\mathfrak{Re}(\CC^{n-k-1}) \oplus \mathfrak{Im}(\CC).\]
Both the property that $\dim K_2=n-k-1$ and the transversality of the evaluation map will thus follow if we manage to show that the linear map
\begin{gather*}
(D\OP{ev})_1|_{K_2} \co K_2 \to 0\oplus\mathfrak{Re}(\CC^{n-k-1}) \oplus 0,\\
\zeta \mapsto \zeta(\pi),
\end{gather*}
is injective. To see this, recall that we have the inequality $\dim K_2 =\dim K \ge n-k-1$ by the above considerations of the Fredholm index and, therefore, if $(D\OP{ev})_1|_{K_2}$ is injective it is necessarily also an isomorphism. The transversality of the evaluation map would thus also follow from this, since
\[ V = 0\oplus\mathfrak{Re}(\CC^{n-k-1}) \oplus \mathfrak{Im}(\CC) \subset \mathfrak{Re}{\CC^{k+1}} \oplus \mathfrak{Re}(\CC^{n-k-1}) \oplus \mathfrak{Im}(\CC)\]
is transverse to
\[ \mathfrak{Re}{\CC^{k+1}} \oplus 0 \oplus 0 \subset \mathfrak{Re}{\CC^{k+1}} \oplus \mathfrak{Re}(\CC^{n-k-1}) \oplus \mathfrak{Im}(\CC)\]
in this case.

To establish the injectivity of $(D\OP{ev})_1|_{K_2}$ we proceed as follows. Consider the holomorphic projections
\[ \pi_i := (x_i,y_i) \co \CC^{k+1} \times \CC^{n-k-1} \times \CC \to \CC,\quad i=k+2,\hdots,n.\]
Take any solution $\zeta \in K_2$ to the linearised problem. Because of the boundary condition, the holomorphic map $\pi_i \circ \zeta$ has boundary values inside the cone $\pi_i(A_2(0,2\pi))$ shown in Figure \ref{fig:linearised}. By the open-mapping theorem, it thus follows that
\begin{equation*}
\label{eq:opentrans}
\pi_i \circ \zeta(D^2 \setminus \partial D^2) \subset \OP{int}\pi_i(A_2(0,2\pi)).
\end{equation*}

Observe that $\pi_i \circ \zeta(e^{i\theta}) \in \pi_i (A_2(\theta))$, and that $\arg(\pi_i \circ \zeta(e^{i\theta}))=\varphi_2(\theta)$ thus is non-decreasing. Since
\[\pi_i \circ \zeta(e^{i\pi}) \in \pi_i(A_2(\pi))= \mathfrak{Re}{\CC},\]
similarly as in the proof of Lemma \ref{lem:handledisc}, the open-mapping theorem can again be used to show that $\pi_i \circ \zeta(e^{i\pi})=0$ if and only if $\pi_i \circ \zeta \equiv 0$ vanishes identically. In other words, the above map $(D\OP{ev})_1|_{K_2} $ into $\mathfrak{Re}\CC^{n-k-1}$ has no kernel.
\end{proof}

\begin{figure}[htp]
\centering
\labellist
\pinlabel $x_i$ at 208 47
\pinlabel $\pi_i(A_2(0,\pi])$ at 144 36
\pinlabel $\pi_i(A_2[\pi,2\pi))$ at 147 58
\pinlabel $y_i$ at 97 109
\endlabellist
\includegraphics{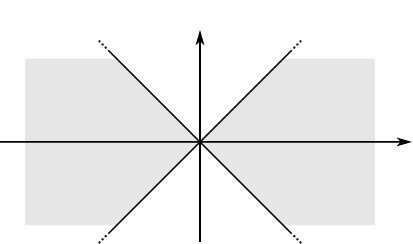}
\caption{The $i$:th component of the linearised boundary condition $\pi_i (A_2(0,2\pi))$ for $i=k+2,\hdots,n$.}
\label{fig:linearised}
\end{figure}

\begin{lem}
\label{lem:transversedisc2}
The moduli space $\mathcal{M}_{c_S;\emptyset;A}(\RR \times L_{S,\epsilon};J_S)/\RR$ satisfies the following properties when $0<\epsilon \le \epsilon_S$ is sufficiently small.
\begin{itemize}
\item $k<n-2$: There is a unique $A \in H_1(L_{S,\epsilon})$ for which this moduli space is non-empty. In this case, it has expected dimension at least one.
\item $k=n-2$: There is a unique $A \in H_1(L_{S,\epsilon})$ for which this moduli space is non-empty. In this case, it consists of precisely two transversely cut out solutions.
\item $k=n-1$: In this case the moduli space is empty.
\end{itemize}
\end{lem}
\begin{proof}
After a translation of the $t$-coordinate, we may assume that $u \in \mathcal{M}_{c_S;\emptyset;A}(\RR \times L_{S,\epsilon};J_S)$ also is contained in $\mathcal{M}_{c_S;\emptyset;A}(V_{S,\epsilon};J_S)$. Since Lemma~\ref{lem:disccontained} thus applies, it follows that $u$ has image contained in the subset 
\[\RR \times \phi\left( U_{\epsilon_S} \cap \left\{\begin{array}{l}
q_1=\cdots=q_{k+1}=0,\\
p_1=\cdots=p_{k+1}=0\\
\end{array}
\right\}\right) \subset \RR \times Y.\]
Again, we will identify $u$ with the corresponding $J_0$-holomorphic disc in $\RR \times U_{\epsilon_S} \subset \RR \times J^1\RR^n$ having boundary on $\RR \times L_{\epsilon,k}$. From this fact we obtain the result in the cases $k \neq n-2$, and it thus remains to consider the case $k=n-2$.

Since the projection
\[\pi_n := (q_n,p_n) \co \RR \times U_{\epsilon_S} \to \CC \]
is holomorphic, and since $\pi_n(\RR \times (L_{\epsilon,k} \cap U_{\epsilon_S}))$ is a figure-eight curve (see Figure \ref{fig:ell}), it follows that $\pi_n \circ u$ is a holomorphic disc having boundary on this figure-eight curve and precisely one corner at the double-point.

Up to parametrisation, there are exactly two such holomorphic polygons in $\CC$, which moreover are embedded, as follows by the assumption that there is a unique positive puncture. As in the proof of Lemma \ref{lem:transversedisc} it can be checked that the two corresponding holomorphic polygons in
\[\left\{\begin{array}{l}
q_1=\cdots=q_{k+1}=0,\\
p_1=\cdots=p_{k+1}=0\\
\end{array}
\right\} \subset T^*\RR^n = \CC^n\]
having boundary on $\Pi_{\OP{Lag}}(L_{\epsilon,k}) \subset T^*\RR^n=\CC^n$ are transversely cut out.

Using \cite[Theorem 1.2]{Lifting} it follows the above holomorphic polygons inside $(T^*\RR^n =\CC^n,\Pi_{\OP{Lag}}(L_{\epsilon,k}))$ can be lifted to the symplectisation, where this lift moreover induces a bijective correspondence between such polygons and $J_0$-holomorphic discs in $\RR \times U_{\epsilon_S} \subset \RR \times J^1\RR^n$ having boundary on $\RR \times L_{\epsilon,k}$ (up to translation), and where the lifted discs are transversely cut out as well. Finally, under the map $(\Id_\RR,\phi)$, these two $J_0$-holomorphic discs are in bijective correspondence with the solutions in $\mathcal{M}_{c_S;\emptyset;A}(\RR \times L_{S,\epsilon};J_S)/\RR$.
\end{proof}

\subsection{Proof of Theorem \ref{thm:surjection}}
\label{proof:surjection}
Let $J_S$ be the cylindrical almost complex structure on $\RR \times Y$ constructed in Section \ref{sec:involution}. Part (2) of Proposition \ref{prop:trans} implies that we may assume $J_S$ to be regular for the moduli spaces $\mathcal{M}_{a;\mathbf{b};A}(\RR \times L;J_S)$ and $\mathcal{M}_{a;\mathbf{b};A}(\RR \times L_{S,\epsilon};J_S)$ whenever $a \neq c_S$. Moreover, the moduli spaces being of the form $\mathcal{M}_{c_S;\emptyset;A}(\RR \times L_{S,\epsilon};J_S)$ and of expected dimension one are transversely cut out by Lemma \ref{lem:transversedisc2}. In conclusion, $J_S$ may be assumed to be regular simultaneously for the moduli spaces in the definition of the Chekanov-Eliashberg algebras of $L$ and $L_{S,\epsilon}$.

We begin with the proof of surjectivity. Consider an element $a \in \mathcal{Q}(L_{S,\epsilon})$. The inequality (\ref{eq:denergy}) implies that, for $\epsilon>0$ small enough, a moduli space of the form $\mathcal{M}_{a;\mathbf{b};A}(V_{S,\epsilon};J_S)$ is empty whenever $\mathbf{b}$ is a word containing a generator $b \in \mathcal{Q}(L)$ satisfying $\ell(b)>\ell(a)$.

For sufficiently small $\epsilon>0$ and $a \neq c_S$, Lemma \ref{lem:uniquedisc} implies that the trivial strip is the only solution in $\mathcal{M}_{a;a;A}(V_{S,\epsilon};J_S)$. By an explicit calculation of the cokernel of the linearised $\overline{\partial}_{J_S}$-operator along a trivial strip $\RR \times a$, the fact that $J_S$ is cylindrical implies that such a solution is transversely cut out.

In order to achieve transversality for the moduli spaces $\mathcal{M}_{a;\mathbf{b};A}(V_{S,\epsilon};J_S)$ in the case when $\mathbf{b} \neq a$, we might have to perturb $J_S$ by a compactly supported (and thus non-cylindrical) perturbation. However, after a sufficiently small such perturbation, the above counts $|\mathcal{M}_{a;a;A}(V_{S,\epsilon};J_S)|=1$ still remain true. For such a choice of $J_S$, we have thus concluded that
\begin{align*}
\Phi_{V_{S,\epsilon}}(a)&= a+B(a), \quad a \neq c_S,\\
\Phi_{V_{S,\epsilon}}(c_S) &\in \ZZ_2,
\end{align*}
where $B(a)$ is spanned by words of generators having action strictly less than $\ell(a)$. In particular the surjectivity of $\Phi_{V_{S,\epsilon}}$ follows.

It remains to investigate $\ker \Phi_{V_{S,\epsilon}}$. Since the induced map
\[ \Phi_{V_{S,\epsilon}}\co (\mathcal{A}(L_{S,\epsilon}),\partial_S)/\langle c_S-\Phi_{V_{S,\epsilon}}(c_S) \rangle \to (\mathcal{A}(L),\partial_S)\]
can be seen to be an isomorphism of DGAs, we get that
\[\ker \Phi_{V_{S,\epsilon}}=\langle c_S-\Phi_{V_{S,\epsilon}}(c_S)  \rangle,\]
where the latter denotes the two-sided ideal generated by $c_S-\Phi_{V_{S,\epsilon}}(c_S)$. Take a disc $u \in \mathcal{M}_{c_S;\emptyset;A}(V_{S,\varepsilon};J_S)$ contributing to $\Phi_{V_{S,\epsilon}}(c_S)$. Lemma \ref{lem:disccontained} implies that the image of $u$ is contained in the set
\[\RR \times \phi\left( U_\epsilon \cap \left\{\begin{array}{l}
q_1=\cdots=q_{k+1}=0,\\
p_1=\cdots=p_{k+1}=0,\\
t\geq 0
\end{array}
\right\}\right).\]

{\bf Case $k<n-1$:} By the above property of $u$, we compute that the moduli space $\mathcal{M}_{c_S;\emptyset;A}(V_{S,\varepsilon};J_S)$ has expected dimension at least one (whenever it is non-empty). In particular, we have
\[ \Phi_{V_{S,\epsilon}}(c_S) =0.\]

{\bf Case $k=n-1$:} In this case there is a unique such disc $u$ which moreover is contained inside $\RR_{\ge0} \times c_S$. Furthermore, Lemma \ref{lem:transversedisc} shows that $\mathcal{M}_{c_S;\emptyset;A}(V_{S,\varepsilon};J_S)=\{u\}$ is transversely cut out. In other words, we have shown that
\[\Phi_{V_{S,\epsilon}}(c_S)=1.\]

\section{The Chekanov-Eliashberg algebra twisted by a\\ submanifold}
\label{sec:twist-homol-algebr}
Let $L \subset (Y,\lambda)$ be a chord-generic Legendrian submanifold of a contact manifold of dimension $2n+1$. In the following we assume that $S \subset L$ is an embedded submanifold of dimension $k$ that admits a non-vanishing normal vectorfield $\mathbf{v} \subset NS \subset TL$. We here require that $k<n-1$, so that the codimension of $S \subset L$ is at least two. Furthermore, we assume that there are no Reeb chords on $L$ having endpoints on $S$. This property can be achieved after a generic smooth perturbation of $S \subset L$.
\subsection{Definitions}
In this section we construct the \emph{Chekanov-Eliashberg algebra of $L$ twisted by $S$}, which is a differential graded algebra that will be denoted by $(\mathcal{A}(L;\!S),\!\partial_{S,\mathbf{v}})$. 
\subsubsection{The graded algebra}
Consider the unital non-commutative algebra
\[ \mathcal{A}(L;S):= \ZZ_2 \langle \mathcal{Q}(L) \cup \{s\} \rangle,\]
freely generated over $\ZZ_2$ by the Reeb chords on $L$ together with a formal generator $s$. We give the Reeb-chord generators the usual grading induced by the Conley-Zehnder index (see Section \ref{sec:grading}), while we grade the formal variable by
\[ |s|=n-k-1.\]

\subsubsection{The boundary map}
For a number $\delta>0$ and a Riemannian metric $g$ on $L$, consider the moduli spaces $\mathcal{M}^{g,\delta}_{a;\mathbf{b},\mathbf{w};A}(L;S,\mathbf{v};J_{\OP{cyl}})$ constructed in Section \ref{sec:moduli}, for a regular cylindrical almost complex structure $J_{\OP{cyl}}$. We define the boundary operator by
\begin{eqnarray*}
\lefteqn{\partial_{S,\mathbf{v}}(a):= }\\
& & \sum_{|a|-|\mathbf{b}|-w|s|+\mu(A)=1}  |\mathcal{M}^{g,\delta}_{a;\mathbf{b},\mathbf{w};A}(L;S,\mathbf{v};J_{\OP{cyl}})/\RR|s^{w_1}b_1s^{w_2}\cdots s^{w_m}b_ms^{w_{m+1}},\\
\lefteqn{\partial_{S,\mathbf{v}}(s) := 0,}
\end{eqnarray*}
where $a \in \mathcal{Q}(L)$, $\mathbf{b}=b_1 \cdots  b_m$ is a (possibly empty) word of Reeb chords on $L$, $A \in H_1(L)$, $\mathbf{w} \in (\ZZ_{\ge 0})^{m+1}$, and $w:=w_1 +\cdots+w_{m+1}$. We extend $\partial_{S,\mathbf{v}}$ to $\mathcal{A}(L;S)$ using the Leibniz rule. It immediately follows that $\partial_{S,\mathbf{v}}$ has degree $-1$.

\begin{lem}
\label{lem:bdy}
For a generic $J_{\OP{cyl}}$ the boundary map $\partial_{S,\mathbf{v}}$ is a well-defined differential. Moreover, there is a canonical identification
\[  (\mathcal{A}(L;S),\partial_{S,\mathbf{v}})/\langle s \rangle = (\mathcal{A}(L),\partial)\]
of DGAs.
\end{lem}
\begin{proof}
For a generic $J_{\OP{cyl}}$ the union of moduli spaces in the definition of $\partial_{S,\mathbf{v}}(a)$ is a compact zero-dimensional manifold, as follows by Proposition~\ref{prop:dim} together with Theorem \ref{thm:transcomp}. It follows that the above count makes sense.

By Theorem \ref{thm:transcomp}, the coefficient in front of the word
\[ s^{w_1}b_1 s^{w_2} \cdots s^{w_m}b_m s^{w_{m+1}} \]
in the expression $\partial_{S,\mathbf{v}}^2(a)$ is given by the count of boundary points of the one-dimensional compact moduli space $\mathcal{M}^{g,\delta}_{a;\mathbf{b},\mathbf{w};A}( L;S,\mathbf{v};J_{\OP{cyl}})$, from which $\partial_{S,\mathbf{v}}^2(a)=0$ now follows.

The last statement is immediate from the definition of the Chekanov-Eliashberg algebra of $L$.
\end{proof}

\subsection{Maps induced by cobordisms}
\label{sec:smallinvariance}
We suppose that $V$ is an exact Lagrangian cobordism from $L_-$ to $L_+$ and that $M \subset V$ is a $(k+1)$-dimensional  submanifold coinciding with
\[((-\infty,-A) \times S_-) \cup ((B,+\infty) \times S_+)\]
outside of a compact set. Moreover, we assume that $\mathbf{v}$ is a non-vanishing normal vectorfield to $M$ in $TV$ that moreover coincides with translation invariant vectorfields $\mathbf{v}_- \subset TL_-$ and $\mathbf{v}_+ \subset TL_+$ in the sets $\{ t \le -A\}$ and $\{ t \ge B\}$, respectively.

We use $s_\pm$ to denote the formal generators of $\mathcal{A}(L_\pm;S_\pm)$. For a choice of regular compatible almost complex structure $J$ which, moreover, is cylindrical in the subsets $(-\infty,A] \times Y$ and $[B,+\infty) \times Y$, we define
\begin{eqnarray*}
\lefteqn{\Phi_{V;M,\mathbf{v}}(a):= }\\
&& \sum_{|a|-|\mathbf{b}|-w|s_-|+\mu(A)=0}  |\mathcal{M}^{g,\delta}_{a;\mathbf{b},\mathbf{w};A}(V;M,\mathbf{v};J)|s_-^{w_1}b_1s_-^{w_2} \cdots s_-^{w_m}b_ms_-^{w_{m+1}}, \\
\lefteqn{\Phi_{V;M,\mathbf{v}}(s_+):=s_-,}
\end{eqnarray*}
where $a \in \mathcal{Q}(L_+)$, $\mathbf{b}=b_1 \cdots  b_m$ is a (possibly empty) word of Reeb chords on $L_-$, $A \in H_1(V)$, $\mathbf{w} \in (\ZZ_{\ge 0})^{m+1}$, and $w:=w_1 +\cdots+w_{m+1}$. We extend $\Phi_{V;M,\mathbf{v}}$ to a unital algebra map
\[ \Phi_{V;M,\mathbf{v}} \co \mathcal{A}(L_+;S_+) \to \mathcal{A}(L_-;S_-).\]

\begin{prop}
For a generic compatible almost complex structure $J$ the above map
\[ \Phi_{V;M,\mathbf{v}} \co (\mathcal{A}(L_+;S_+),\partial_{S_+,\mathbf{v}_+}) \to (\mathcal{A}(L_-;S_-),\partial_{S_-,\mathbf{v}_-}) \]
is well defined unital DGA morphism. Furthermore, it descends to the DGA morphism
\[ \Phi_V \co (\mathcal{A}(L_+),\partial_+) \to (\mathcal{A}(L_-),\partial_-) \]
under the natural projections to the respective Chekanov-Eliashberg algebras.
\end{prop}
\begin{proof}
As in the proof of Lemma \ref{lem:bdy}, the fact that this map is well-defined again follows from Proposition \ref{prop:dim} together with Theorem \ref{thm:transcomp}.

Furthermore, by Theorem \ref{thm:transcomp} the coefficient in front of the word
\[ s^{w_1}b_1 s^{w_2} \cdots s^{w_m}b_m s^{w_{m+1}} \]
in the expression
\[(\Phi_{V;M,\mathbf{v}}\circ \partial_{S_+,\mathbf{v}_+}-\partial_{S_-,\mathbf{v}_-}\circ \Phi_{V;M,\mathbf{v}})(a)\]
is given by the count of boundary points of the one-dimensional compact moduli space $\mathcal{M}^{g,\delta}_{a;\mathbf{b},\mathbf{w};A}(V;M,\mathbf{v};J)$.
The chain-map property now follows.

Finally, the statement concerning the induced maps on the (non-twisted) Chekanov-Eliashberg algebras follows immediately from the definition of $\Phi_V$ together with Lemma \ref{lem:bdy}
\end{proof}

\begin{prop}
\label{prop:twistiso}
The map
\[ \Phi_{\RR \times L;M,\mathbf{v}} \co (\mathcal{A}(L;S_+),\partial_{S_+,\mathbf{v}_+}) \to  (\mathcal{A}(L;S_-),\partial_{S_-,\mathbf{v}_-}) \]
is a tame isomorphism of DGAs for any regular almost complex structure which is a sufficiently small compactly supported perturbation of a cylindrical almost complex structure.
\end{prop}
\begin{proof}
Recall that trivial strips $\RR \times a$ are rigid pseudo-holomorphic discs in the case when the almost complex structure is cylindrical. By the formula for the $d\lambda$-area in Proposition \ref{prop:energy}, it thus follows that
\[  \Phi_{\RR \times L;M,\mathbf{v}}(a)=a+B(a),\]
where $B(a)$ is a linear combination of words of generators having action strictly less than $\ell(a)$ (here we have prescribed $\ell(s_\pm):=0$). Finally, we have
\[\Phi_{\RR \times L;M,\mathbf{v}}(s_+)=s_-\]
by definition.
\end{proof}

Observe that the required regular almost complex structure exists by Theorem \ref{thm:transcomp}. Alternatively, one can perturb the cobordism $M \subset \RR \times L$ together with the Riemannian metric $g$ to achieve transversality for a \emph{cylindrical} almost complex structure, using a standard finite-dimensional transversality argument applied to the evaluation maps from the involved moduli spaces.

\begin{cor}
\label{cor:nullcob}
Let $S \subset L$ be a submanifold with non-vanishing normal vectorfield $\mathbf{v}$. If $S$ admits an embedded null-cobordism $(M,\partial M) \subset ([0,1] \times L,\{1\} \times S)$ along which $\mathbf{v}$ moreover extends as a non-vanishing normal vectorfield, then there is a tame isomorphism
\[ \Phi_{V;M,\mathbf{v}} \co (\mathcal{A}(L;S),\partial_{S,\mathbf{v}}) \to (\mathcal{A}(L;\emptyset),\partial_{\emptyset,\emptyset})\]
of DGAs.
\end{cor}

\subsection{Proof of Theorem \ref{thm:isom}}
\label{sec:isomorphism}
We assume that $L_{S,\epsilon} \subset Y$ is obtained from $L \subset Y$ by a Legendrian ambient surgery on the framed sphere $S \subset L$ and let $V_{S,\epsilon}$ denote the induced exact Lagrangian cobordism from $L$ to $L_{S,\epsilon}$. Recall the definition of the core disc $C_{S,\epsilon} \subset V_{S,\epsilon}$ in Section \ref{sec:constructV}. In particular, $C_{S,\epsilon}$ coincides with
\[ (-\infty,-1) \times S \subset (-\infty,-1) \times L \subset V_{S,\epsilon} \]
outside of a compact set.  We also fix a non-vanishing normal vectorfield $\mathbf{v}$ to $C_{S,\epsilon} \subset V_{S,\epsilon}$ which, outside of a compact set, has the property that it is
\begin{itemize}
\item invariant under translations of the $t$-coordinate, and
\item coincides with a non-vanishing normal vectorfield of $S \subset L_{S,\epsilon}$.
\end{itemize}
In particular, it follows that $\mathbf{v}$ is homotopic to a constant vectorfield with respect to the frame of the normal bundle of $S$ used for the surgery.

By construction, we have an inclusion
\[(\mathcal{A} (L_{S,\epsilon}),\partial_{L_{S,\epsilon}}) \subset (\mathcal{A} (L_{S,\epsilon};\emptyset,\emptyset),\partial_{\emptyset,\emptyset})\]
of DGAs. We show that the DGA morphism
\[ \Psi := \Phi_{V_{S,\epsilon};C_{S,\epsilon},\mathbf{v}} \co (\mathcal{A} (L_{S,\epsilon}),\partial_{L_{S,\epsilon}}) \to (\mathcal{A}(L;S),\partial_{S,\mathbf{v}}),\]
obtained as the restriction of the above DGA morphism satisfies the required properties, given that we choose the almost complex structure with some care. To that end, we will use the cylindrical almost complex structure $J_S$ constructed in Section \ref{sec:involution}; this is also the almost complex structure used in the proof of Theorem \ref{thm:surjection}.

First, for $0<\epsilon \le \epsilon_S$ sufficiently small, the proof of Theorem \ref{thm:surjection} immediately generalises to give the following. For each $a \neq c_S$ we have
\[ \Psi(a)=a+B(a),\]
where $B(a)$ is a linear-combination of words consisting of generators of action strictly less than $\ell(a)$ (here we have defined $\ell(s):=0$).

Second, the disc count in Lemma \ref{lem:handledisc} together with the transversality result in \ref{lem:transversedisc} shows that
\[ \Psi(c_S)=s,\]
given that $0<\epsilon \le \epsilon_S$ is sufficiently small.

In particular, it follows that $\Psi$ is the required tame isomorphism of DGAs for (a suitable perturbation of) the above almost complex structure~$J_S$.

\section{The Chekanov-Eliashberg algebra twisted by a\\ hypersurface}
\label{sec:critical}
We now consider the case when $S \subset L$ is a closed co-oriented hypersurface, and where $L \subset (Y,\lambda)$ is an oriented Legendrian submanifold. The requirement that the boundary of a pseudo-holomorphic disc $u \co (\dot{D}^2,\partial\dot{D}^2) \to (\RR \times Y,\RR \times L)$ intersects $\RR \times S$ transversely is in this case an open condition on the space of maps. Hence, introducing such a boundary-point constraint does not cut down the dimension of a moduli space and, consequently, the algebraic formalism in Section \ref{sec:twist-homol-algebr} cannot be expected to work. Here we present an alternative construction that, unfortunately, does not recover the full statement analogous to Theorem \ref{thm:isom} in general.

There is a version of the Chekanov-Eliashberg algebra of $L$ whose coefficients are taken in the group-ring $\ZZ_2[H_1(L)]$ (see e.g.~\cite{ContHomR}). In \cite[Section 2.3.3]{KnotContHom} this construction was refined to a version where the underlying algebra is the free product $\mathcal{A}(L) * \ZZ_2[H_1(L)]$, i.e.~where the group-ring elements do not commute with the Reeb chord generators. The Chekanov-Eliashberg algebra twisted by a hypersurface, as defined below, can in fact be obtained from this latter version of the Chekanov-Eliashberg algebra with Novikov coefficients.

For simplicity we will here only consider the case when there are finitely many Reeb chords, and we assume that $S$ is disjoint from the Reeb chords on $L$. Consider the algebra $\mathcal{A}(L;S)$ as defined in Section \ref{sec:twist-homol-algebr}. Recall that the generator $s$ is graded by $|s|=n-k-1=0$ in this case. We extend this algebra to
\[\widetilde{\mathcal{A}}(L;S):=\mathcal{A}(L;S)*\langle s^{-1} \rangle \supset \mathcal{A}(L;S),\]
by adding multiplicative inverses $s^{-l}$ to $s^l$; in particular, this algebra is no longer free.

Consider the decomposition
\[\mathcal{M}_{a;\mathbf{b};A}(L;J_{\OP{cyl}})=\bigsqcup_{\mathbf{w} \in \ZZ^{m+1}} \mathcal{M}_{a;\mathbf{b},\mathbf{w};A}(L;S;J_{\OP{cyl}}) \]
into different components, where $\mathcal{M}_{a;\mathbf{b},\mathbf{w};A}(L;S;J_{\OP{cyl}})$ consists of those $J_{\OP{cyl}}$-holomorphic discs whose boundary arc between the $i$:th and $(i+1)$:th puncture, starting the count at the positive puncture, has algebraic intersection-number $w_i \in \ZZ$ with $\RR \times S$. One can define a differential on $\widetilde{\mathcal{A}}(L;S)$ by the formulas
\begin{align*}
\partial_{S}(a)&:= \sum_{|a|-|\mathbf{b}|+\mu(A)=1 \atop {\mathbf{w} \in \ZZ^{m+1}}} |\mathcal{M}_{a;\mathbf{b},\mathbf{w};A}(L;S;J_{\OP{cyl}})/\RR|s^{w_1}b_1s^{w_2}\cdots s^{w_m}b_ms^{w_{m+1}},\\
\partial_{S}(s^{\pm1})&:=0,
\end{align*}
and extend the definition to all of $\widetilde{\mathcal{A}}(L;S)$ using the Leibniz rule. The below proposition can be seen to follow from the invariance result for the Chekanov-Eliashberg algebra with Novikov coefficients.
\begin{prop}
The homotopy type of the DGA $(\widetilde{\mathcal{A}}(L;S),\partial_S)$ is invariant under Legendrian isotopy and the choice of cylindrical almost complex structure $J_{\OP{cyl}}$. In fact, this DGA can be obtained as a quotient of the DGA as defined in \cite[Section 2.3.3]{KnotContHom}. Furthermore
\[ (\widetilde{\mathcal{A}}(L;S),\partial_S) / \langle s-1\rangle \simeq (\mathcal{A}(L),\partial).\]
\end{prop}

We now assume that $S \subset L$ is a co-oriented sphere for which there is an isotropic surgery disc $D_S \subset Y$ compatible with $S \subset L$. We moreover assume that there are no Reeb chords on $L \cup D_S$ starting or ending on $D_S$. According to Lemma \ref{lem:contracting}, this can be achieved after a Legendrian isotopy of~$L$.

Consider the decomposition
\[\mathcal{M}_{a;\mathbf{b};A}(V_S;J)=\bigsqcup_{\mathbf{w} \in \ZZ^{m+1}} \mathcal{M}_{a;\mathbf{b},\mathbf{w};A}(V_S;C_S;J) \]
into different components, where $\mathcal{M}_{a;\mathbf{b},\mathbf{w};A}(L;C_S;J)$ consists of those $J_{\OP{cyl}}$-holomorphic discs whose boundary arc between the $i$:th and $(i+1)$:th puncture, starting the count at the positive puncture, has algebraic intersection-number $w_i \in \ZZ$ with the core disc $C_S \subset V_S$.

Fix a generic almost complex structure $J_S$ as in Section \ref{sec:integrable}, which is used in the proof of Theorem \ref{thm:isom}. We use this almost complex structure together with the above decomposition of the moduli space to define a DGA morphism
\[ \Psi\co(\mathcal{A}(L_S),\partial_{L_S}) \to (\widetilde{\mathcal{A}}(L;S),\partial_S) \]
by prescribing it to take the value
\[ \Psi(a)=\sum_{|a|-|\mathbf{b}|+\mu(A)=0 \atop {\mathbf{w} \in \ZZ^{m+1}}} |\mathcal{M}_{a;\mathbf{b},\mathbf{w};A}(V_S;C_S;J)| s^{w_1}b_1s^{w_2}\cdots  s^{w_m}b_ms^{w_{m+1}}\]
on generators, and then extending it to a unital algebra map. The proof of Theorem \ref{thm:isom} given in Section \ref{sec:isomorphism} generalises to show that
\begin{prop}
\label{prop:critical}
The map
\[ \Psi\co (\mathcal{A}(L_S),\partial_{L_S}) \to (\widetilde{\mathcal{A}}(L;S),\partial_S)\]
is a well-defined and unital DGA morphism that, moreover, is injective and satisfies
\[ \Psi(c_S)=s.\]
\end{prop}

\begin{rmk} \begin{enumerate}
\item After adding formal inverses $c_S^{-l}$ to $c_S^l$ in the Chekanov-Eliashberg algera $\mathcal{A}(L_S)$ of $L_S$, the above map $\Psi$ becomes an isomorphism.
\item There are circumstances when the above map $\Psi$ has its image in the free sub-algebra
\[\mathcal{A}(L;S) \subset \widetilde{\mathcal{A}}(L;S).\]
In this case, it follows that
\[\Psi\co (\mathcal{A}(L_S),\partial) \to (\mathcal{A}(L;S),\partial_S)\]
is an isomorphism and, in particular, $\mathcal{A}(L;S) \subset \widetilde{\mathcal{A}}(L;S)$ is a sub-DGA.
\item There are certain geometric constraints on the isotropic surgery disc $D_S$ for which the assumptions in (2) above can be seen to hold. In particular, this is true in situations when the boundary of a pseudo-holomorphic disc in the above moduli spaces always has positive local intersection number with $C_S$ (appropriately oriented) at each intersection point.
\end{enumerate}
\end{rmk}

\section{The moduli spaces with boundary-point constraints}
\label{sec:moduli}
We here define the moduli spaces of pseudo-holomorphic discs in a symplectisation $(\RR \times Y,d(e^t\lambda))$ with boundary in an exact Lagrangian cobordism $V \subset \RR \times Y$ satisfying boundary-point constraints on parallel copies of a submanifold $M \subset V$ of codimension at least two. We also establish a transversality result for these moduli spaces.

\subsection{The definitions of the moduli space}
Similarly to the moduli spaces in \cite[Section 8.2.D]{EffectLegendrian}, we make the following definition. Again, we assume that $(Y,\lambda)$ is a $(2n+1)$-dimensional contact manifold. Let $V \subset (\RR \times Y,d(e^t\lambda))$ be an exact Lagrangian cobordism from $L_-$ to $L_+$ and let $M \subset V$ a $(k+1)$-dimensional submanifold of codimension at least two. We fix a non-vanishing normal vectorfield $\mathbf{v} \subset TV|_M$ to $M$, a Riemannian metric $g$ on $V$, and a compatible almost complex structure $J$ on $\RR \times Y$. We moreover require there to exist numbers $A<B$ for which:
\begin{itemize}
\item The compatible almost complex structure $J$ is cylindrical outside of the set $[A,B] \times Y$;
\item $V \cap \{ t \notin [A,B]\}$ coincides with the cylinders
\[((-\infty,A) \times L_-) \cup ((B,+\infty) \times L_+);\]
\item $M \cap \{ t \notin [A,B]\} \subset V$ coincides with
\[((-\infty,A) \times S_-) \cup ((B,+\infty) \times S_+)\]
where $S_\pm \subset L_\pm$ and we, moreover, require $S_\pm$ to be disjoint from the end-points of the Reeb chords on $L_\pm$;
\item $\mathbf{v}$ restricted to the sets $\{ t \le A \}$ and $\{ t \ge B \}$ is given by $\mathbf{v}_- \in TL_-$ and $\mathbf{v}_+ \in TL_+$, respectively, where $\mathbf{v}_\pm$ moreover is invariant under translations of the $t$-coordinate; and
\item $g$ is the product metric $dt^2 + g_\pm$ outside outside of $V \cap \{ t \in [A,B] \}$, where $g_\pm$ is a Riemannian metric on $L_\pm$.
\end{itemize}

Let $a$ be a Reeb chord on $L_+$, $\mathbf{b}=b_1  \cdots  b_m$ a word of Reeb chords on $L_-$, and $A \in H_1(V)$. We also fix a tuple
\[\mathbf{w}=(w_1,\hdots,w_{m+1}) \in (\ZZ_{\ge 0})^{m+1}\]
and write
\[w=w_1+\cdots+w_{m+1}.\]
\begin{defn}
For any $\delta>0$ the moduli space
\[\mathcal{M}^{g,\delta}_{a;\mathbf{b},\mathbf{w};A}(V;M,\mathbf{v};J) \subset \mathcal{M}_{a;\mathbf{b};A}(V;J)\]
consists of the solutions
\[u \co (\dot{D}^2,\partial\dot{D}^2) \to (\RR \times Y,V)\]
satisfying the following boundary-point constraints. Recall that $\dot{D} = D \setminus \{ p_0,\hdots,p_m\}$ denotes the unit disc with $m+1$ fixed boundary points removed.

We require there to be $w$ boundary points on $\partial \dot{D}^2$ that are mapped to the submanifolds
\[M_{i,\delta} :=\exp_M((i-1) \delta \mathbf{v}), \quad i=1,\hdots,w.\]
Moreover, the $i$:th point with respect to the order on $\partial D^2 \setminus \{ p_0 \}$ induced by the orientation, is here required to be mapped to $M_{i,\delta}$ and we require there to be $w_{i+1}$ such points on the boundary arc in $\partial \dot{D}^2$ starting at $p_i \in \partial D^2$ and ending at $p_{i+1} \in \partial D^2$ (here we set $p_{m+1}:=p_0$).
\end{defn}

The case when $V=\RR \times L$, $\mathbf{v}$, and
\[M_{j,\delta} = \RR \times  \exp_S((j-1) \delta \mathbf{v}) \subset \RR \times L,\quad j=1,\hdots,w,\]
all are invariant under translations of the $t$-coordinate will be referred to as the \emph{cylindrical setting}. When the almost complex structure $J=J_{\OP{cyl}}$ also is cylindrical, it follows that there is a natural action by $\RR$ on the above moduli spaces induced by translation of the $t$-coordinate. In this case we will write
\[\mathcal{M}^{g,\delta}_{a;\mathbf{b},\mathbf{w};A}(L;S,\mathbf{v};J_{\OP{cyl}}) := \mathcal{M}^{g,\delta}_{a;\mathbf{b},\mathbf{w};A}(\RR \times L;\RR \times S,\mathbf{v};J_{\OP{cyl}}).\]

\subsubsection{Definition in terms of the evaluation map}

In the case when $m+1 \ge 3$ there are no conformal orientation-preserving reparametrisations of $\dot{D}^2$ that fix the puncture $p_0$. In other words, there is a unique map $u \co \dot{D}^2 \to \RR \times Y$ representing a solution $u \in \mathcal{M}_{a;\mathbf{b};A}(V;J)$.

In the case when $m+1<3$, there is a $(2-m)$-dimensional family of conformal automorphisms of $\dot{D}^2$. In order to construct an evaluation-map on these moduli-spaces, we can either choose a representative $u \co \dot{D}^2 \to \RR \times Y$ for each $u \in \mathcal{M}_{a;\mathbf{b};A}(V;J)$ (this has to be done in a way that continuously depends on the point in the moduli space), or one can choose to work on the level of moduli spaces of \emph{parametrised maps}. We will choose the latter approach, and we will use $\widetilde{\mathcal{M}}_{a;\mathbf{b};A}(V;J)$ to denote the moduli spaces of parametrised solutions.

There is a well-defined smooth evaluation-map
\begin{gather*}
\OP{ev}_w \co \widetilde{\mathcal{M}}_{a;\mathbf{b};A}(V;J) \times \partial \dot{D}^w \to V^w,\\
(u,(e^{i\theta_1},\hdots,e^{i\theta_w})) \mapsto (u(e^{i\theta_1}),\hdots,u(e^{i\theta_w})).
\end{gather*}
In other words, the above moduli spaces $\mathcal{M}^{g,\delta}_{a;\mathbf{b},\mathbf{w};A}(V;M,\mathbf{v};J)$ can be identified with an appropriate connected component of a quotient of
\[ \OP{ev}_w^{-1}(M_{1,\delta} \times \cdots \times M_{w,\delta}),\]
where the quotient identifies two maps $u$ and $u'$ that differ by a holomorphic reparametrisation of the domain.

An important feature of $M_{1,\delta} \times \cdots \times M_{w,\delta} \subset V^w$ is that this submanifold is disjoint from the generalised diagonal
\[\{ (v_1,\hdots,v_w) \in V^w;\  v_i=v_j, \: i \neq j\} \subset V^w.\]
This property simplifies the transversality argument considerably.

\subsection{Transversality results}
The property of being transversely cut out for the moduli-space
\[\mathcal{M}^{g,\delta}_{a;\mathbf{b},\mathbf{w};A}(V;M,\mathbf{v};J) \subset \mathcal{M}_{a;\mathbf{b};A}(V;J)\]
can be reformulated into the requirements that
\begin{itemize}
\item $\mathcal{M}_{a;\mathbf{b};A}(V;J)$ is transversely cut out, and
\item the evaluation map
\[\OP{ev}_w \co \widetilde{\mathcal{M}}_{a;\mathbf{b};A}(V;J) \times \partial \dot{D}^w \to V^w\] is transverse to $M_{1,\delta} \times \cdots \times M_{w,\delta} \subset V^w$.
\end{itemize}
Recall that the first property holds for a generic choice of $J$ by the results in Section \ref{sec:bgtrans}.

\begin{prop}
\label{prop:dim}
In the case when
\[\OP{ev}_w \co \widetilde{\mathcal{M}}_{a;\mathbf{b};A}(V;J) \times \partial \dot{D}^w \to V^w\] is transverse to $M_{1,\delta} \times \cdots \times M_{w,\delta} \subset V^w$, it follows that
\[ \dim \mathcal{M}^{g,\delta}_{a;\mathbf{b},\mathbf{w};A}(V;M,\mathbf{v};J) = |a|-|\mathbf{b}|+\mu(A)-(n-k-1)w.\]
\end{prop}
\begin{proof}
Recall that, for regular $J$, we have
\[\dim \mathcal{M}_{a;\mathbf{b};A}(V;J)=|a|-|\mathbf{b}|+\mu(A).\]
The domain of $\OP{ev}_w$ thus has dimension
\begin{align*}
\dim (\widetilde{\mathcal{M}}_{a;\mathbf{b};A}(V;J) \times \partial \dot{D}^w)&=|a|-|\mathbf{b}|+\mu(A)+w+\eta_m(2-m),\\
\eta_m &:= \begin{cases} 0, & m \ge 2,\\
1, & m < 2,
\end{cases}
\end{align*}
while the codimension of $M_{1,\delta} \times \cdots \times M_{w,\delta} \subset V^w$ is given by
\[w(n+1)-w(k+1)=w(n-k).\]
The statement now follows.
\end{proof}

We will need the following standard result.
\begin{lem}
\label{lem:ham}
Any compactly supported smooth isotopy $\phi^s \co V \to V$, where $V \subset (X,\omega)$ is a Lagrangian submanifold, extends to a Hamiltonian isotopy $\phi^s_{H_t}$ of $X$. Similarly, any compactly supported smooth isotopy $\phi^s \co L \to L$, where $L \subset (Y,\xi)$ is a Legendrian submanifold, extends to a contact isotopy of $Y$.
\end{lem}
\begin{proof}
We begin with the first case. Take a Weinstein neighbourhood of $V$ symplectomorphic to a neighbourhood of the zero-section of the cotangent bundle $(T^*V,d\theta_V)$, under which $V$ moreover is identified with the zero-section.  Let $\Gamma_s \subset TV$ be the one-parameter family of vectorfields generating the isotopy $\phi^s$. Consider the time-dependent Hamiltonian
\begin{gather*}
H_s \co T^*V \to \RR, \\
\eta \mapsto \eta(\Gamma_s).
\end{gather*}
It is easily checked, e.g.~in a choice of Darboux coordinates, that the Hamiltonian flow $\phi^s_{H_s}$ of $H_s$ is given by $\phi^s$ along the zero-section. This Hamiltonian can then be suitably cut off to generate a Hamiltonian flow on $X$ with the required properties.

The statement about the contact isotopy follow similarly. Since $L$ has a neighbourhood that is contactomorphic to $(J^1L,dz+\theta_L)$, one can lift the above Hamiltonian isotopy on $T^*L$ to a (suitably cut off) contact isotopy of $J^1L$. It can moreover be checked that this contact isotopy preserves the zero-section of $J^1L$. We again obtain the sought contact isotopy by cutting off the induced contact Hamiltonian.
\end{proof}

Let $\mathcal{J}$ denote a Banach manifold of compatible almost complex structures on $(\RR \times Y,d(e^t\lambda))$ that are cylindrical outside of some compact set.
The transversality results in Section \ref{sec:bgtrans} are established via the intermediate result that the so-called universal moduli space
\[\widetilde{\mathcal{M}}_{a;\mathbf{b};A}(V;J) \times_J \mathcal{J} \to \mathcal{J}\]
is a Banach manifold, where the projection onto $\mathcal{J}$ is a Fredholm map. Here we have to use a suitable functional analytic set-up, which we omit from the discussion. It then follows from the Sard-Smale theorem that there is a Baire subset of regular almost complex structures in $\mathcal{J}$.

This result also holds in the case when $V=\RR \times L$ and $\mathcal{J}=\mathcal{J}_{\OP{cyl}}$ consists of cylindrical almost complex structures, as follows from the proof of Proposition \ref{prop:trans}. This result is a generalisation of \cite[Theorem 1.8]{FredholmTheory}, which considers the case of moduli-spaces of pseudo-holomorphic maps from a closed Riemann surface into $\RR \times Y$.

\begin{prop}
\label{prop:submersion}
The evaluation map
\[\OP{ev}_w \co \widetilde{\mathcal{M}}_{a;\mathbf{b};A}(V;J) \times_J \mathcal{J} \times \partial \dot{D}^w \to V^w\] from the universal moduli space is transverse to the submanifold  $M_{1,\delta} \times \cdots \times M_{w,\delta} \subset V^w$. In the cylindrical setting, the same is true when $\mathcal{J}=\mathcal{J}_{\OP{cyl}}$ is a suitable space of cylindrical almost complex structures.
\end{prop}
\begin{proof}
We write $u=(a,v)$. Assume that
\[\OP{ev}_w(u,J,(e^{i\theta_1},\hdots,e^{i\theta_w})) \in M_{1,\delta} \times \cdots \times M_{w,\delta}.\]
We will show that the differential $D_{(u,J,(e^{i\theta_1},\hdots,e^{i\theta_w}))}\OP{ev}_w$ is a surjection onto the space
\[ W_1 \times \cdots \times W_w \subset T_{\OP{ev}_w(u,J,(e^{i\theta_1},\hdots,e^{i\theta_w}))}V^w,\]
defined by
\[  T_{u(e^{i\theta_j})}V \supset W_j=\begin{cases}
T_{u(e^{i\theta_j})}L_-, & a(e^{i\theta_j}) < A, \\
T_{u(e^{i\theta_j})}V, & A \le a(e^{i\theta_j}) \le B, \\
T_{u(e^{i\theta_j})}L_+, & a(e^{i\theta_j}) > B.
\end{cases}\]
Since each $M_{j,\delta} \subset \RR \times Y$ is invariant under translations of the $t$-coordinate outside of the set $[A,B] \times Y$, this will imply the claim.

Recall that $\OP{ev}_w(u,J,(e^{i\theta_1},\hdots,e^{i\theta_w}))$ is disjoint from the generalised diagonal
\[\{ (v_1,\hdots,v_w) \in V^w;\  v_i=v_j, \: i \neq j\} \subset V^w\]
by the construction of $M_{j,\delta}$. The result can now be established by the following standard argument (see e.g.~\cite[Section 3.4]{JholCurves}).

Take any vector
\[(\zeta_1,\hdots,\zeta_w) \in W_1 \times \cdots \times W_w,\]
and write $(x_1,\hdots,x_w):=\OP{ev}_w(u,J,(e^{i\theta_1},\hdots,e^{i\theta_w}))$. Using Lemma \ref{lem:ham} it follows that there is a Hamiltonian isotopy $\phi^s$ of $\RR \times Y$ satisfying $\phi^s(V)=V$ as well as
\[\frac{d}{ds} (\phi^s(x_1),\hdots,\phi^s(x_w))=(\zeta_1,\hdots,\zeta_w).\]
Furthermore, outside of the set $[A,B] \times Y$, this Hamiltonian may be taken to be of the form $\phi^s=(\Id_\RR,\widetilde{\phi}^s)$, where $\widetilde{\phi}^s$ is a contact isotopy.

Observe that $\phi^s$ induces a one-parameter family
\[s \mapsto (\phi^s \circ u, (D\phi^s)\circ J \circ (D\phi^s)^{-1}) \in \widetilde{\mathcal{M}}_{a;\mathbf{b};A}(V;J) \times_J \mathcal{J} \]
of solutions in the universal moduli space. Finally, differentiating $\OP{ev}_w$ along this curve can be seen to give the vector $(\zeta_1, \hdots, \zeta_w)$ as required.
\end{proof}

Using the Sard-Smale theorem for Banach manifolds, it follows that there is a Baire subset of compatible almost complex structures $J \in \mathcal{J}$ for which the moduli space $\mathcal{M}^{g,\delta}_{a;\mathbf{b};\mathbf{w};A}(V;M,\mathbf{v};J)$ is transversely cut-out. Together with the Gromov-Hofer compactness in \cite{CompSFT} (also see Section \ref{sec:bgcomp}) for the moduli spaces $\mathcal{M}_{a;\mathbf{b};A}(V;J)$, we conclude that
\begin{thm}
\label{thm:transcomp}
There is a Baire subset of $J \in \mathcal{J}$ for which the moduli space $\mathcal{M}^{g,\delta}_{a;\mathbf{b};\mathbf{w};A}(V;M,\mathbf{v};J)$ is transversely cut-out, and for which the compactification of
\[ \mathcal{M}^{g,\delta}_{a;\mathbf{b};\mathbf{w};A}(V;M,\mathbf{v};J) \subset \overline{\mathcal{M}}^{g,\delta}_{a;\mathbf{b};A}(V;J) \]
is transverse to the boundary. In the cylindrical setting, the same is true when $\mathcal{J}=\mathcal{J}_{\OP{cyl}}$ consists of cylindrical almost complex structures.
\end{thm}

\section{Acknowledgements}
This paper is based upon the author's Ph.D.~thesis \cite{MyThesis}. The author would like to express his deepest gratitude to Tobias Ekholm, his Ph.D.~advisor, who suggested the problem and and who was very helpful during the writing of thesis and also to Frédéric Bourgeois, the main opponent at its examination, for pointing out mistakes and providing valuable comments. Also, the author would like to thank Kai Zehmisch for interesting discussions concerning boundary-point constraints on pseudo-holomorphic curves.

\address{Georgios Dimitroglou Rizell\\Centre for Mathematical Sciences, University of Cambridge\\
Wilberforce Road, Cambridge CB3 0WB, UK\\
\email{g.dimitroglou@maths.cam.ac.uk}\\
\end{document}